\begin{document}
%
%
\newtheorem{theorem}{Theorem}
\newtheorem{algorithm}[theorem]{Algorithm}
\newtheorem{conjecture}[theorem]{Conjecture}
\newtheorem{axiom}[theorem]{Axiom}
\newtheorem{corollary}[theorem]{Corollary}
\newtheorem{definition}[theorem]{Definition}
\newtheorem{example}[theorem]{Example}
\newtheorem{question}[theorem]{Question}
\newtheorem{lemma}[theorem]{Lemma}
\newtheorem*{case1}{Case 1}
\newtheorem*{case2}{Case 2}
\newtheorem{fact}[theorem]{Fact}
\newtheorem{proposition}[theorem]{Proposition}
\newcounter{case_counter}\setcounter{case_counter}{1}
\newtheorem{claim}{Claim}[case_counter]
\newtheorem*{claim1}{Claim 1}
\newtheorem*{claim2}{Claim 2}

%
%
\newcommand{\OMIT}[1]{}
\def\N{\mathbb{N}}
\def\R{\mathbb{R}}
\def\Prob{\mathbb{P}}
\def\P{\mathcal{P}}
\def\C{\mathcal{C}}
\def\U{\mathcal{U}}
\def\E{\mathbb{E}}
\def\eul{\rm{e}}
\def\e{\rm{e}}
\renewcommand{\Gamma}{\varGamma}
\renewcommand{\epsilon}{\varepsilon}
\newcommand{\eps}{\varepsilon}
\newcommand{\floor}[1]{\lfloor{#1}\rfloor}
\newcommand{\ceil}[1]{\lceil{#1}\rceil}
\renewcommand{\bar}{\overline}
\renewcommand{\hat}{\widehat}
\newcommand{\ie}{i.e.\ }
\newcommand{\etc}{etc.\ }
\newcommand{\wrt}{w.r.t.\ }
\newcommand{\whp}{w.h.p.\ }
\newcommand{\sm}{\setminus}
\newcommand{\nib}[1]{\noindent {\bf #1}}
\newcommand{\cproof}[2]{\nib{#1} #2 \qed\medskip}

%
%
\def\COMMENT#1{}
\def\TASK#1{}
\def\noproof{{\unskip\nobreak\hfill\penalty50\hskip2em\hbox{}\nobreak\hfill%
       $\square$\parfillskip=0pt\finalhyphendemerits=0\par}\goodbreak}
\def\endproof{\noproof\bigskip}
\newdimen\margin   
\def\textno#1&#2\par{%
   \margin=\hsize
   \advance\margin by -4\parindent
          \setbox1=\hbox{\sl#1}%
   \ifdim\wd1 < \margin
      $$\box1\eqno#2$$%
   \else
      \bigbreak
      \hbox to \hsize{\indent$\vcenter{\advance\hsize by -3\parindent
      \sl\noindent#1}\hfil#2$}%
      \bigbreak
   \fi}
\def\proof{\removelastskip\penalty55\medskip\noindent{\bf Proof. }}
\def\enddiscard{}
\long\def\discard#1\enddiscard{}

%
%
\date{\today}
\title{Cycles Of Given Length In Oriented Graphs}
\author{Luke Kelly, Daniela K\"uhn \and Deryk Osthus}
\thanks {D.~K\"uhn and D.~Osthus were supported by the EPSRC, grant no.~EP/F008406/1.}
\begin{abstract}
We show that for each $\ell\geq 4$ every sufficiently large oriented graph~$G$ with
$\delta^+(G),\delta^-(G)\ge \lfloor|G|/3\rfloor+1$
contains an $\ell$-cycle. This is best possible for all those $\ell\geq 4$ which are not
divisible by~3.
Surprisingly, for some other values of~$\ell$, an $\ell$-cycle is forced by a much
weaker minimum degree condition.
We propose and discuss a conjecture regarding the precise minimum degree which forces
an $\ell$-cycle (with $\ell \ge 4$ divisible by $3$) in an oriented graph.
We also give an application of our results to pancyclicity and consider $\ell$-cycles in general digraphs.
\end{abstract}
\maketitle

\section{Introduction}\label{sec:intro}

\subsection{Girth}

All the directed graphs (digraphs) considered in this paper have no loops and at most two edges between each
pair of vertices: at most one edge in each direction. A digraph is an \emph{oriented graph}
if it is an orientation of a simple graph.
A central problem in digraph theory is the Caccetta-H\"aggkvist conjecture~\cite{CH}
(which generalized an earlier conjecture of Behzad, Chartrand and Wall~\cite{BCW}):%
\COMMENT{which stated that every regular strong digraph~$D$
with minimum semidegree~$|D|/\ell$ contains a cycle of length at most~$\ell$.}
\begin{conjecture} \label{CHC}
An oriented graph on $n$ vertices with minimum outdegree~$d$
contains a cycle of length at most $\lceil n/d\rceil$.
\end{conjecture}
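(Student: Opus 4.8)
The final statement is the Caccetta--H\"aggkvist conjecture, which is famously open, so what follows is an attack plan together with the point at which it stalls, rather than a complete argument. The plan begins with two harmless reductions. By deleting surplus out-edges we may assume $G$ is exactly $d$-out-regular, since removing edges cannot lengthen the shortest cycle; and in a $d$-out-regular oriented graph there is certainly a directed cycle, so we may fix a \emph{shortest} one, $C=v_0v_1\cdots v_{g-1}v_0$ of length $g$, and aim to show $g\le\lceil n/d\rceil$. The extremal example to keep in mind is $V=\mathbb{Z}_n$ with $i\to j$ whenever $j-i\in\{1,\dots,d\}$, whose shortest cycle has length exactly $\lceil n/d\rceil$; it shows that any proof must account for essentially all $n$ vertices and cannot afford to discard more than lower-order terms.

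The core step is to exploit the minimality of $C$ through a counting argument. Minimality forbids short chords: an edge from $v_i$ to a vertex of $C$ other than the few just after $v_i$ would close a cycle shorter than $g$; similarly, two cycle vertices far apart along $C$ cannot share an out-neighbour outside $V(C)$, and an out-neighbour of $v_i$ off $C$ cannot send an edge to a vertex of $C$ too far ahead of $v_i$. First I would sum $|N^+(v_i)|=d$ over the $g$ vertices of $C$, then bring in the second out-neighbourhoods, and try to show the resulting sets are almost disjoint, forcing $n\ge(1+o(1))\,gd$ and hence $g\le(1+o(1))\,n/d$. The hard part is turning this into the sharp bound $g\le\lceil n/d\rceil$ rather than one weaker by a constant factor, which is precisely where such elementary arguments have always fallen short.

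The decisive special case, and the place I expect the genuine obstacle, is $d=\lceil n/3\rceil$, where the claim reduces to: every oriented graph on $n$ vertices with minimum out-degree at least $n/3$ contains a directed triangle. Assuming $G$ is triangle-free and $d$-out-regular with $d\ge n/3$, for each $v$ the set $N^+(v)$ spans no edge back into $\{v\}\cup N^+(v)$, and one would try a global double count of directed paths of length two, or a spectral or flag-algebra density computation on the out-neighbourhood digraph, to contradict $3d>n$. This is exactly the still-unresolved heart of Conjecture~\ref{CHC}: the best unconditional results only weaken $n/3$ to roughly $0.3465n$ (Hladk\'y--Kr\'al'--Norin), via a delicate optimisation that no short counting argument reproduces. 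For the present paper it therefore suffices to record Conjecture~\ref{CHC} as motivation and to proceed with the weaker unconditional bounds and with the exact $\ell$-cycle results established in the sections that follow.
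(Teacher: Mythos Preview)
Your assessment is correct: the statement is the Caccetta--H\"aggkvist conjecture, and the paper does not prove it either---it is stated there as Conjecture~\ref{CHC}, serving purely as motivation, with the surrounding text surveying partial results (Chv\'atal--Szemer\'edi, Nishimura, Shen) rather than offering any argument toward the full conjecture. So there is no ``paper's own proof'' to compare against, and your decision to record it as open and move on is exactly what the paper itself does.

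One small discrepancy worth noting: for the semidegree version of the triangle case you cite Hladk\'y--Kr\'al'--Norin with constant roughly $0.3465$, whereas the paper (written earlier) cites Hamburger--Haxell--Kostochka at $0.346$ for the semidegree bound and Shen's $0.355$ for the outdegree bound. This is a bibliographic detail about the state of the art, not a mathematical issue with your write-up.
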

Note that in Conjecture~\ref{CHC} it does not matter whether we consider oriented graphs or
general digraphs.
Chv\'atal and Szemer\'edi~\cite{chvatal_szemeredi} showed that a minimum outdegree of at least~$d$ guarantees
a cycle of length at most $\lceil 2n/(d+1)\rceil$. For most values of $n$ and $d$, this is improved
by a result of Shen~\cite{shen_2}, which guarantees a cycle of length at most $3\lceil 0.44 n/d\rceil$.
Chv\'atal and Szemer\'edi~\cite{chvatal_szemeredi} also showed that Conjecture~\ref{CHC} holds
if we increase the bound on the cycle length by adding a constant~$c$. They showed that $c:=2500$ will do.
Nishimura~\cite{Nishimura} refined their argument to show that one can take
$c:=304$. The next result of Shen gives the best known constant.
\begin{theorem}[Shen~\cite{shen_ch}]\label{thm:shen}
An oriented graph on $n$ vertices with minimum outdegree~$d$
contains a cycle of length at most $\lceil n/d\rceil +73$.
\end{theorem}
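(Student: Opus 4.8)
The plan is to bound the girth $g$ of $G$, showing $g\le\ceil{n/d}+73$; since every cycle has length at most $n$ this is trivial unless $g$ is large, and we may also assume $d\ge 2$ (when $d=1$ we have $\ceil{n/d}=n\ge g$). Fix a vertex $v_0$ on a shortest cycle $C=v_0v_1\cdots v_{g-1}v_0$ and perform a breadth-first search from $v_0$ along out-edges; let $N_i$ be the set of vertices at directed distance exactly $i$ from $v_0$ and $B_i=N_0\cup\cdots\cup N_i$. Minimality of $C$ forces $\mathrm{dist}(v_0,v_i)=i$ (otherwise one short-cuts $C$ into a closed walk of length less than $g$, hence a shorter cycle), so $N_0,\dots,N_{g-1}$ are all non-empty; being pairwise disjoint, $\sum_{i=0}^{g-1}|N_i|\le n$. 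It therefore suffices to prove $\sum_{i=0}^{g-1}|N_i|\ge(g-73)d$, since this yields $g-73\le n/d$ and hence $g\le\ceil{n/d}+73$.

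So the task reduces to a lower bound on how fast the balls $B_i$ grow, and the only structural input is the girth, which I would use in the form: \emph{for every edge $u\to w$ of $G$ one has $\mathrm{dist}(w,u)\ge g-1$} (equivalently, the shortest cycle through any edge has length at least $g$). I would handle the levels in two regimes. While $|B_i|<g$, the subdigraph $G[B_i]$ is acyclic (a cycle inside it would have length $\le|B_i|<g$), so it has a sink, all $\ge d$ of whose out-neighbours lie in $N_{i+1}$; thus $|N_{i+1}|\ge d$ and the ball gains at least $d$ at each such step. Once $|B_i|\ge g$ this is unavailable, and instead one must show that only boundedly many further levels can be \emph{deficient} (have fewer than $d$ vertices): a long run of consecutive deficient levels would force a large number of out-edges of $G$ to run backwards into $B_i$, and the back-edge principle above — combined with the fact that the vertices $v_0,\dots,v_{g-1}$ already occupy one slot in each level — rules this out once the run exceeds a constant length. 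Adding the per-level gains over $i=0,\dots,g-1$ and absorbing the level $N_0=\{v_0\}$ together with the bounded total deficit coming from the second regime into the additive constant gives $\sum_{i=0}^{g-1}|N_i|\ge(g-73)d$.

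I expect the main obstacle to be the second regime, and in particular the bookkeeping required to push the additive constant down to $73$. The acyclicity argument only drives $|B_i|$ up to roughly $g$, so keeping the balls growing by about $d$ per level all the way to level $g-1$ demands a genuinely quantitative analysis of backward edges; and improving Nishimura's constant $304$ to $73$ means one cannot be wasteful — one should bound the total deficit $\sum_i\max(0,\,d-|N_i|)$ rather than merely the number of deficient levels, apply the girth condition across several overlapping windows of $g-1$ consecutive levels simultaneously, and track the interaction between the two regimes carefully. The remaining ingredients — the trivial reductions, the choice of $v_0$ on a shortest cycle, the acyclicity/sink argument, and the closing arithmetic — should all be routine, with essentially all of the work concentrated in this counting.
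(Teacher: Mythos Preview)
The paper does not prove this theorem at all: it is quoted from Shen~\cite{shen_ch} as a background result and used as a black box (in the proof of Theorem~\ref{thm:weakconj}(i)), so there is no proof in the paper to compare your proposal against.

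As for the sketch itself, the BFS-from-a-shortest-cycle framework and the acyclicity/sink argument while $|B_i|<g$ are standard and do appear in this line of work (Chv\'atal--Szemer\'edi, Nishimura, Shen). However, your ``second regime'' is where the entire content of Shen's improvement lies, and what you have written there is not really a plan but a statement of hope: you assert that a long run of deficient levels ``rules itself out'' via the back-edge/girth principle, but you give no mechanism for turning ``many out-edges go backwards'' into a contradiction, nor any indication of how the constant~$73$ (as opposed to~$304$ or~$2500$) arises. Shen's argument involves a rather delicate double-counting and is not obtainable just by ``applying the girth condition across several overlapping windows''; getting down to~$73$ required genuinely new ideas beyond Nishimura's refinement. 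So while nothing you wrote is wrong, the proposal as it stands does not contain a proof --- it correctly identifies where the difficulty is and then defers it.
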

The special case of Conjecture~\ref{CHC} that has attracted most interest is when $d=\lceil n/3 \rceil$.
The following bound towards this case improves an earlier one
of Caccetta and H\"aggkvist~\cite{CH}.
\begin{theorem}[Shen~\cite{shen_triangle}]\label{shen_triangle}
If $G$ is any oriented graph on $n$ vertices with
$\delta^+(G) \geq 0.355n$ then $G$ contains a directed triangle.
\end{theorem}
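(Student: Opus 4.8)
The plan is to argue by contradiction: suppose $G$ is an oriented graph on $n$ vertices with $\delta^+(G)\ge d:=0.355\,n$ and with no directed triangle, and derive a contradiction. The structural input is the following. Fix a vertex $v$, write $N^{++}(v)$ for the set of vertices at directed distance exactly $2$ from $v$, and $C(v)$ for the set of non-neighbours of $v$. Then the four sets $\{v\},N^-(v),N^+(v),N^{++}(v)$ are pairwise disjoint --- the only non-obvious part, $N^{++}(v)\cap N^-(v)=\emptyset$, being exactly the absence of a directed triangle --- so
\[
d^-(v)+d^+(v)+|N^{++}(v)|\ \le\ n-1\qquad\text{for every }v,
\]
together with its mirror image after swapping all in- and out-neighbourhoods. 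Read edge-locally, the same observation says that if $v\to u$ then $N^+(u)\cap N^-(v)=\emptyset$, hence $N^+(u)\subseteq N^+(v)\cup C(v)$.

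I would turn this into a numerical bound by iterating on out-neighbourhoods. For $v\to u$ the inclusion $N^+(u)\subseteq N^+(v)\cup C(v)$, together with $|C(v)|=n-1-d^+(v)-d^-(v)$, gives $|N^+(u)\cap N^+(v)|\ge d^+(u)-|C(v)|$, so the triangle-free oriented graph $G[N^+(v)]$ has minimum out-degree at least $\delta_v:=d+d^+(v)+d^-(v)-(n-1)$. Suppose we already know (base case: the trivial bound with constant $1$) that every triangle-free oriented graph on $N$ vertices has minimum out-degree below $cN$. Applying this to $G[N^+(v)]$ gives $\delta_v<c\,d^+(v)$, i.e.\ $(1-c)d^+(v)+d^-(v)<n-1-d$; since $d^+(v)\ge d$, summing over $v$ and using $\sum_v d^-(v)=e(G)\ge nd$ yields $d<n/(3-c)$. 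Thus the constant improves under $c\mapsto 1/(3-c)$, and iterating from $c=1$ gives $1\mapsto \tfrac{1}{2}\mapsto\tfrac{2}{5}\mapsto\tfrac{5}{13}\mapsto\cdots$, converging to the bound $(3-\sqrt 5)/2\approx 0.382$ of Caccetta and H\"aggkvist. (The same $2n/5$ step also drops out of a direct double count of directed paths of length two, none of which may close into a directed triangle; that count is governed by the number $\sum_v e(G[N^+(v)])$ of transitive triangles, bounded crudely by $\sum_v\binom{d^+(v)}{2}$.)

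The genuine difficulty --- and the step I expect to be the main obstacle --- is getting strictly below $(3-\sqrt 5)/2$ down to $0.355$, since the recursion above stalls exactly at that golden-ratio fixed point. To improve it one must squeeze the quantities it uses: the induced digraph $G[N^+(v)]$ is not merely triangle-free but also of large minimum out-degree, so its edge count (hence the transitive-triangle count) is much smaller than $\binom{d^+(v)}{2}$; likewise $G[N^+(v)]$ inherits large second out- and in-neighbourhoods, and the in-degree version of the displayed inequality can be run in tandem with the out-degree version. The real work is to carry several such savings through the recursion at once without their interfering, and to check that the resulting system of inequalities forces $d<0.355\,n$ rather than some weaker constant; this calibration is the technical heart of the proof.
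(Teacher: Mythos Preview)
The paper does not prove this theorem at all: it is quoted as a result of Shen~\cite{shen_triangle} and used as a black box (e.g.\ in the proof of Lemma~\ref{lemma:diam6}). So there is no ``paper's own proof'' to compare against.

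As for your proposal itself, it is not a proof of the stated theorem but rather a proof of the weaker Caccetta--H\"aggkvist bound $(3-\sqrt{5})/2\approx 0.382$, together with an honest acknowledgement that you do not know how to get from there to $0.355$. Your iterative argument $c\mapsto 1/(3-c)$ is correct and is indeed the classical route to $(3-\sqrt{5})/2$, but the theorem asserts the strictly stronger constant $0.355$. Your final paragraph correctly identifies that breaking past the golden-ratio fixed point is the whole content of Shen's contribution, and then stops short of doing it: phrases like ``the real work is to carry several such savings through'' and ``this calibration is the technical heart of the proof'' are descriptions of what a proof would need to contain, not the proof itself. In particular, you have not specified which auxiliary quantities to track alongside the minimum out-degree, nor shown that the resulting coupled recursion actually converges below $0.355$. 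Shen's argument introduces a specific second parameter (related to the density of transitive triangles, or equivalently to edge counts inside out-neighbourhoods) and runs a two-variable optimisation; without that concrete mechanism your outline remains at the level of a plan rather than a proof.
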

If one considers the \emph{minimum semidegree} $\delta^0(G):=\min \{\delta^+(G),\delta^-(G) \}$ instead of
the minimum outdegree $\delta^+(G)$, then the constant can be improved slightly. The best known
value for the constant in this case is currently $0.346$~\cite{HHK_CH}.
See the monograph~\cite{BGbook} or the survey~\cite{Nathanson} for further partial results
on Conjecture~\ref{CHC}.

\subsection{Cycles of given length in oriented graphs}

We consider the natural and related question of which minimum
semidegree forces cycles of length exactly $\ell\geq 4$ in
an oriented graph. We will often refer to cycles of length~$\ell$ as \emph{$\ell$-cycles}.
Our main result answers this question completely when~$\ell$ is not a multiple of~$3$.
\begin{theorem}\label{thm:short_cycles}
Let $\ell\geq 4$. If $G$ is an oriented graph on $n \ge 10^{10}\ell$ vertices with
$\delta^0(G) \ge \lfloor n/3 \rfloor + 1$ then $G$ contains an $\ell$-cycle.
Moreover for any vertex $u\in V(G)$ there is an $\ell$-cycle containing~$u$.
\end{theorem}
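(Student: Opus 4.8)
The plan is to prove the stronger, \emph{rooted} statement directly --- that for any prescribed $u\in V(G)$ some $\ell$-cycle passes through $u$. This is the form needed for the application to pancyclicity, it is well suited to the path-surgery arguments below, and in any case deleting $u$ need not preserve the hypothesis $\delta^0\ge\floor{n/3}+1$ exactly, so a reduction to the unrooted version is not immediate. The backbone is a structural dichotomy. Fix a small constant $\epsilon>0$. One shows that either $G$ is \emph{$\epsilon$-extremal} --- meaning $V(G)=V_1\cup V_2\cup V_3$ with each $|V_i|$ within $\epsilon n$ of $n/3$ and all but at most $\epsilon n^2$ edges \emph{conforming}, i.e.\ directed $V_1\to V_2\to V_3\to V_1$ --- or else $G$ contains a cycle of length $\not\equiv 0\pmod 3$ and of length at most some constant $C=C(\epsilon)$. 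This is the right dichotomy because the blow-up of a directed triangle --- which has $\delta^0=\floor{n/3}$ and all cycle lengths divisible by $3$ --- is precisely the obstruction to the theorem when $\ell\not\equiv 0\pmod 3$, and $\delta^0(G)\ge\floor{n/3}+1$ is exactly what rules it out: a strongly connected digraph partitioned into three classes carrying only conforming edges would force each class to have size at least $\floor{n/3}+1$, which is impossible. To establish the dichotomy I would use that $\delta^0(G)>n/3$ makes $G$ strongly connected of bounded diameter (three in- and out-neighbourhoods cannot be pairwise disjoint), apply Shen's Theorem~\ref{thm:shen} to get a cycle of length at most $76$, and then run a stability argument: if every cycle of length at most $C$ were divisible by $3$, propagating a label in $\{0,1,2\}$ along short paths would recover an almost-conforming tripartition and hence put us in the extremal case.

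In the non-extremal case the goal is a length-flexibility lemma: for all but a bounded number of ordered pairs $(x,y)$, and for every integer $t$ with $C'\le t\le n/100$, there is an $x$--$y$ path of length exactly $t$ avoiding any prescribed constant-sized set of vertices. Since $\delta^0(G)>n/3$ the digraph is dense and expands, so greedily grown paths of any prescribed length up to a positive fraction of $n$ are available; the only thing that could go wrong is a congruence obstruction, but if all short $x$--$y$ paths lay in a single residue class modulo $3$ for many pairs, then (just as in the dichotomy) one would recover an almost-conforming tripartition, contradicting non-extremality. The short non-conforming cycle of length at most $C$ supplied by the dichotomy is used once, as a hub, to break any residual congruence. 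Granted the lemma, an $\ell$-cycle through $u$ is assembled by choosing $v\in N^+(u)$ and $w\in N^-(u)$ and taking a $v$--$w$ path of length $\ell-2$ that avoids $u$ internally; the finitely many small values of $\ell$ not covered by the range $[C'+2,\,n/100]$ are handled by a short direct argument using $\delta^0(G)>n/3$ together with the short non-conforming cycle.

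In the extremal case, write $V(G)=V_1\cup V_2\cup V_3$ for the conforming partition. First clean up: reassign the few atypical vertices so that all but at most $\sqrt{\epsilon}\,n$ vertices have at least $n/3-\sqrt{\epsilon}\,n$ conforming out-neighbours and in-neighbours. The key extra fact is that $\delta^0(G)\ge\floor{n/3}+1$ together with $|V_i|\le\ceil{n/3}$ forces a linear number of \emph{non-conforming} edges: for each $i$ with $|V_{i+1}|\le\floor{n/3}$, every vertex of $V_i$ sends one. Now every cycle using only conforming edges has length divisible by $3$, and inserting one ``horizontal'' edge (inside a class) shifts the length to $\equiv 1\pmod 3$ while one ``backward'' edge shifts it to $\equiv 2\pmod 3$. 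So for $\ell\equiv 0\pmod3$ one builds a conforming $\ell$-cycle through $u$ by winding around $V_1V_2V_3$ (routing through $u$'s class, using that $u$ has many conforming neighbours after cleaning), while for $\ell\equiv 1$ or $2\pmod 3$ one builds such a cycle incorporating one or two non-conforming edges of the appropriate type; since the types and locations of the forced non-conforming edges are themselves constrained, one argues via the within-class digraphs --- which inherit positive minimum out- or in-degree and hence contain short cycles --- that the required correction is always available, and if $u$ is itself atypical one uses its non-conforming edges directly. Throughout, $n\ge 10^{10}\ell$ guarantees ample room inside the three classes for the long conforming segments.

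The main obstacle is the extremal case --- not the idea but the bookkeeping: handling the atypical vertices, ensuring the correcting non-conforming edges lie where the construction can actually use them, and covering all residues and all small values of $\ell$ uniformly, especially when the root $u$ is itself badly behaved. The length-flexibility lemma in the non-extremal case is the second most delicate point; the remaining ingredients (strong connectivity, bounded diameter, and the dichotomy itself) are comparatively routine given $\delta^0(G)>n/3$ and Theorem~\ref{thm:shen}.
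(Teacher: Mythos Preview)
Your approach via a stability/extremal dichotomy is genuinely different from the paper's, and considerably heavier. The paper never partitions $V(G)$ and never argues by residues modulo $3$. For $\ell=4,5,6$ it gives short self-contained counting arguments (Lemmas~\ref{lemma:4}--\ref{lemma:6}); for $\ell\ge 7$ the whole proof rests on two simple devices. The first is the \emph{$xy$-butterfly} (Fact~\ref{fact:butterfly}): any vertex $x$ lies in a five-vertex configuration containing $x$--$y$ paths of lengths $2$, $3$ and $4$ to some $y$. The second is Lemma~\ref{lemma:34or5}: between any two vertices there is a directed path of length $3$, $4$ or $5$, and this survives the deletion of $O(\ell)$ vertices. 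One takes a butterfly rooted at $u$, greedily extends from $y$ by a path of length $\ell-7$ to some $v$ avoiding the butterfly, closes up with a $v$--$u$ path of length $3$, $4$ or $5$ from Lemma~\ref{lemma:34or5}, and selects the butterfly-path of matching length. The explicit constant $10^{10}$ comes entirely from the counting in Lemma~\ref{lemma:34or5}; no structural analysis is needed. What your dichotomy buys, in principle, is a picture of \emph{why} the blow-up of a triangle is the unique obstruction; what the paper's approach buys is a short, fully explicit proof.

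Your programme is plausible, but as written it is a sketch with the hardest steps deferred rather than a proof. The length-flexibility lemma you posit in the non-extremal case --- an $x$--$y$ path of \emph{every} length $t$ in a long interval --- is far stronger than what is actually needed (three consecutive lengths suffice), and ``greedily grown paths'' produce some path, not a path of prescribed length; you would have to show how the abundance of non-conforming edges supplied by non-extremality can be threaded into a path at a controlled position. In the extremal case you correctly identify the bookkeeping as the main obstacle and then stop: before cleaning, $|V_i|$ may exceed $\ceil{n/3}$ by $\epsilon n$, so $\delta^0\ge\floor{n/3}+1$ does not yet force non-conforming edges at every vertex, while after cleaning the hypothesis no longer applies verbatim to the reassigned vertices; nor do you argue that the \emph{type} of non-conforming edge (within-class versus backward) required for a given residue is actually available where $u$ can reach it. Finally, the small values of $\ell$ in the non-extremal case are dismissed in a clause, whereas the paper devotes three separate lemmas to $\ell=4,5,6$. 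Each of these gaps can likely be closed, but the completed argument would be longer than the paper's direct one and would not obviously yield an explicit constant.
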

The extremal example showing this to be best possible for $\ell\geq 4$, $\ell\not\equiv 0 \mod 3$
is given by the blow-up of a $3$-cycle. More precisely,
let $G$ be the oriented graph on $n$ vertices formed by dividing $V(G)$ into $3$
vertex classes $V_1,V_2,V_3$ of as equal size as possible and
adding all possible edges from $V_i$ to $V_{i+1}$, counting modulo~$3$.
Then this oriented graph contains no $\ell$-cycle and has minimum semidegree~$\floor{n/3}$.

Also, for all those $\ell \ge 4$ which are multiples of $3$, the `moreover' part is best possible
for infinitely many~$n$. To see this,
consider the modification of the above example formed by
deleting a vertex from the largest vertex class and
adding an extra vertex $u$ with $N^+(u)=V_2$ and
$N^-(u)=V_1$. This gives an oriented graph with minimum semidegree
$\lfloor (n-1)/3\rfloor$. For $\ell \equiv 0 \mod 3$ it contains no $\ell$-cycle through~$u$.

Perhaps surprisingly, we can do much better than Theorem~\ref{thm:short_cycles} for some cycle lengths
(if we do not ask for a cycle through a given vertex).
Indeed, we conjecture that the correct bounds are those given by the obvious extremal example:
when we seek an $\ell$-cycle, the extremal example is probably the blow-up of a $k$-cycle, where~$k \ge 3$
is the smallest integer which is not a divisor of~$\ell$.

\begin{conjecture}\label{con:short}
Let $\ell\geq 4$ be a positive integer and let~$k$ be the
smallest integer that is greater than~$2$ and does not
divide~$\ell$.
Then there exists an integer $n_0=n_0(\ell)$ such that every oriented
graph~$G$ on $n\geq n_0$ vertices with minimum semidegree $\delta^0(G)\geq \lfloor n/k\rfloor+1$
contains an $\ell$-cycle.
\end{conjecture}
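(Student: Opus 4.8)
The natural approach is a stability argument in the spirit of the proof of Theorem~\ref{thm:short_cycles} (the case $k=3$). Write $\ell=qk+r$ with $1\le r\le k-1$; observe first that $k$ is necessarily a prime power (if $k=uv$ with coprime $u,v>2$ then $u,v<k$ both divide $\ell$, hence so does $k$; and $k=2\cdot(\text{odd}\ge 3)$ is ruled out, since it would force $2\nmid\ell$, hence $4\nmid\ell$ and $k\le 4$). Fix constants $0<\eta\ll\gamma\ll 1/k$ and $n\ge n_0(\ell)$. The first step is a dichotomy: either $G$ is $\gamma$-\emph{close} to the extremal configuration, i.e.\ there is a partition $V(G)=V_1\cup\dots\cup V_k$ with all but at most $\gamma n^2$ edges going from some $V_i$ to $V_{i+1}$ (indices modulo $k$; call such edges \emph{conforming}), or $G$ is $\gamma$-\emph{far} from every such partition. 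The elementary fact behind the conjectured threshold is that a balanced blow-up of the directed $k$-cycle has minimum semidegree exactly $\lfloor n/k\rfloor$, and that \emph{no} blow-up of the directed $k$-cycle can have $\delta^0\ge\lfloor n/k\rfloor+1$, since its $k$ classes cannot all exceed $n/k$ in size; a routine check shows this is, up to the $\gamma$-slack, the only configuration with no $\ell$-cycle and minimum semidegree $\lfloor n/k\rfloor$ (any other bounded-size obstruction digraph either contains a cycle of length in $[3,k-1]$, which divides $\ell$, or has a blow-up of smaller minimum semidegree). So in the close case the hypothesis $\delta^0(G)\ge\lfloor n/k\rfloor+1$ is forced to be realised by non-conforming edges, and this is the mechanism we exploit.

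In the close case one argues by hand. Each bipartite graph between $V_i$ and $V_{i+1}$ is dense, so inside the conforming part of $G$ one can route paths that wind around $V_1\to\dots\to V_k\to V_1$ essentially freely, realising every sufficiently large multiple of $k$ as the length of such a path between prescribed endpoints. Assigning to an edge from $V_a$ to $V_b$ the displacement $d(e):=b-a\pmod k$ (conforming edges have $d=1$), a cycle using non-conforming edges $e_1,\dots,e_t$ and conforming edges otherwise has length $\equiv\sum_{i=1}^{t}\big(1-d(e_i)\big)\pmod k$. It therefore suffices to produce a \emph{short} cycle of length $\equiv r\pmod k$ and then pad it to length exactly $\ell$ by replacing conforming edges with long conforming paths taking extra laps. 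Constructing the short cycle is where the case analysis lives: using $\delta^0(G)\ge\lfloor n/k\rfloor+1$ together with a dissection by the sizes $|V_i|$ (a class larger than $n/k$ forces a class smaller than $n/k$; vertices whose conforming out-neighbourhood lies in a small class must have non-conforming out-edges; the balanced subcase is handled by a counting argument) one shows that the displacements realised by non-conforming edges generate a subgroup of $\mathbb{Z}_k$ containing $r$, and that a consistent such family can be spliced through the conforming part into a cycle of length at most $\ell$ that is $\equiv r\pmod k$. This is technical---and for the smallest admissible values of $\ell$ one must be frugal, since padding needs the short cycle to be genuinely short---but, as in the case $k=3$, it should not require a fundamentally new idea.

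The far case is where the real difficulty lies, and I do not expect it to be reachable with current techniques. What one wants is a structural lemma: every oriented graph $H$ with $\delta^0(H)\ge(1/k-\eta)|H|$ that is $\gamma$-far from a blow-up of the directed $k$-cycle contains a ``flexible'' short cyclic structure---for instance a short cycle whose length is not divisible by $k$, or, better, two short closed walks through a common vertex whose lengths admit a non-negative integer combination equal to $\ell$---after which a greedy or regularity-plus-embedding argument yields an $\ell$-cycle (one may equivalently seek this structure in the reduced digraph $R$ of $G$, whose minimum semidegree is still about $|R|/k$, trading a better $n_0$ for no genuine simplification). The obstacle is that controlling short cycles under a minimum semidegree of only about $n/k$ is essentially the content of the Caccetta--H\"aggkvist conjecture: even the special case ``does $\delta^0(G)\ge\lfloor n/k\rfloor+1$ force a cycle of length at most $k$?'' is open, and Theorems~\ref{thm:shen} and~\ref{shen_triangle} deliver only cycles of length $k+O(1)$, which need not divide $\ell$. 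For $k=3$ the regime $\delta^0\approx n/3$ has been analysed enough---near-pancyclicity and high connectivity away from the blow-up of the directed triangle---for Theorem~\ref{thm:short_cycles} to go through; for general $k$ no such structural theory is available.

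A realistic route is therefore: (i) carry out the extremal analysis above in full, thereby reducing Conjecture~\ref{con:short} to a clean statement about the cycle-length spectrum of oriented graphs with minimum semidegree just above $n/k$ that are far from the canonical blow-up; and (ii) establish that statement either for individual small values of $k$ (recall $k$ is a small prime power and $\ell$ a large multiple of $\mathrm{lcm}(3,\dots,k-1)$), or unconditionally under Conjecture~\ref{CHC}, which by the discussion above should reduce the far case to the same flexible-structure lemma with the required short cycles now guaranteed.
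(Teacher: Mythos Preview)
The statement you are addressing is a \emph{conjecture} in the paper, not a theorem; the paper offers no proof of it. What the paper does prove are the partial results of Theorem~\ref{thm:weakconj}: an approximate version for $k\ge 150$ via Theorem~\ref{thm:shen} combined with the Regularity-lemma reduction of Lemma~\ref{lemma:walk2cycle}, and approximate versions for $k\in\{4,5\}$ (with $\ell$ large) via diameter bounds (Lemma~\ref{lemma:diam6}) together with a transitive-triangle trick that manufactures two cycles of consecutive lengths through a common vertex. The paper also remarks, immediately after the proof of Theorem~\ref{thm:weakconj}(i), that assuming the Caccetta--H\"aggkvist conjecture one gets an approximate version of Conjecture~\ref{con:short} with $k$ replaced by $k-1$; this is exactly your diagnosis of the far-case obstruction.

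Your proposal is, appropriately, a research outline rather than a proof, and you say so: the far case is explicitly flagged as out of reach with current tools. That assessment agrees with the paper's. Your stability/displacement framework, however, is not the route the paper takes even for its partial results---there is no close/far dichotomy anywhere in the paper; its arguments go through closed walks in a reduced graph rather than through structural analysis of near-extremal configurations. So there is simply no paper-proof to compare against. As a program your outline is reasonable, but be aware that even your close case contains substantive unargued steps: the claim that the blow-up of the directed $k$-cycle is (up to $\gamma$-slack) the \emph{only} near-extremal configuration, and the claim that the available non-conforming displacements necessarily generate a subgroup of $\mathbb{Z}_k$ containing $r$ and can be routed into a single short cycle, are both asserted rather than proved. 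What you have is a plausible reduction strategy, not a proof, and the paper makes no stronger claim for the conjecture than you do.
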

It is easy to see that
the only values of $k$ that can appear in Conjecture~\ref{con:short} are of the form
$k=p^s$ with $k \ge 3$, where $p \ge 2$ is a prime and $s$ a positive integer.%
     \COMMENT{Suppose that $k=p^s q$, where $p \ge 2$ and $q \ge 2$ is coprime to $p$.
Wlog $p^s\ge q$. Also, we may assume $k >4$ and so $p^s \ge 3$ since $p^s\ge q$.
Suppose that $q \ge 3$.
Then $\ell$ is a multiple
of $p^s$ and $q$ and so of $p^s q$, a contradiction.
So suppose $q=2$. Then $k>4$ implies that $\ell$ is a multiple of $4$ and thus of $2$.
Thus $\ell$ is also a multiple of $p^s q$.}
Theorem~\ref{thm:short_cycles} confirms this conjecture in the case when~$k=3$.
The following result implies that Conjecture~\ref{con:short} is approximately true when $k=4,5$
and~$\ell$ is sufficiently large. It also gives weaker bounds on the minimum semidegree
for large values of~$k$.
\begin{theorem}\label{thm:weakconj}
Let $\ell\geq 4$ be a positive integer and let~$k$ be the
smallest integer that is greater than~$2$ and does not
divide~$\ell$.
\begin{enumerate}
\item[{\rm (i)}] There exists an integer $n_0=n_0(\ell)$ such that whenever $k\ge 150$
and~$G$ is an oriented graph on $n\geq n_0$ vertices with $\delta^+(G)\ge n/k+150n/k^2$
then~$G$ contains an $\ell$-cycle.
\item[{\rm (ii)}] If $k=4$ and $\ell\ge 42$ then for every $\eps>0$ there exists an
integer $n_0=n_0(\ell,\eps)$ such that every oriented graph~$G$ on $n\geq n_0$ vertices
with $\delta^0(G)\ge n/k+\eps n$ contains an $\ell$-cycle.
\item[{\rm (iii)}] The analogue of (ii) holds if $k=5$ and $\ell\ge 2550$.
\end{enumerate}
\end{theorem}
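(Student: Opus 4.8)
\medskip
\noindent\emph{Sketch of the intended argument.}
The plan is to prove all three parts via the directed version of Szemer\'edi's regularity lemma, reducing each statement to a question about the reduced graph and then building the $\ell$-cycle by ``winding'' a path around a short cycle of clusters.

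First I would apply the diregularity lemma to $G$ to obtain a reduced digraph $R$ on $M$ clusters $V_1,\dots,V_M$ of size $m\sim n/M$ (with an exceptional set of size $o(n)$), all relevant pairs $\epsilon$-regular of density at least $d$, where $\epsilon\ll d\ll 1/k^2$, and $\delta^+(R)\ge(\delta^+(G)/n-o(1))M$ (respectively $\delta^0(R)\ge(\delta^0(G)/n-o(1))M$ in parts~(ii) and~(iii)); digons of $R$ are handled by standard means, and in any case a digon would immediately yield an $\ell$-cycle whenever $2\mid\ell$, which holds in parts~(i) and~(iii), so we may essentially treat $R$ as an oriented graph. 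The point of this reduction is the following \emph{winding} principle: if $Z_1,\dots,Z_r$ are cycles of $R$ all passing through a common cluster $V^*$, with lengths $l_1,\dots,l_r$, then for any non-negative integers $a_1,\dots,a_r$ with $\sum_i a_i l_i\le(1-o(1))m$ one finds a cycle of $G$ of length exactly $\sum_i a_i l_i$: wind $a_i$ times around $Z_i$ and splice the pieces at $V^*$, using fresh vertices in each cluster on each pass (routine, since the clusters are huge and consecutive pairs are regular). In particular, for $r=1$: a cycle of $R$ of length $c$ with $c\mid\ell$ yields an $\ell$-cycle of $G$, since $\ell$ is a constant and so $\ell/c\ll m$.

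\emph{Part (i).} Here one short cycle suffices. Since $\delta^+(R)\ge(1/k+150/k^2-o(1))M$, Shen's theorem (Theorem~\ref{thm:shen}) applied to $R$ gives a cycle of length $c\le\lceil M/\delta^+(R)\rceil+73$. A direct computation shows that for $k\ge150$ one has $\lceil M/\delta^+(R)\rceil\le k-74$, hence $c\le k-1$; this is exactly where the hypothesis $k\ge150$ and the constant $150$ are needed. As $k$ is the least integer greater than $2$ that does not divide $\ell$, every integer in $\{3,\dots,k-1\}$ divides $\ell$, so $c\mid\ell$, and the winding principle produces the $\ell$-cycle.

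\emph{Parts (ii) and (iii).} Now $\delta^0(R)>M/k$ with $k=4$ (resp.\ $k=5$), which is too weak to force, directly, a short cycle whose length divides $\ell$: the triangle threshold in Theorem~\ref{shen_triangle} exceeds $M/4$, and Shen's theorem only bounds the girth of $R$ by $k+73$. Instead I would study the \emph{period} $p$ of a suitable strongly connected part of $R$ (the gcd of its cycle lengths). If every cycle there has length divisible by an integer $q$, then its vertex set splits into classes $P_0,\dots,P_{q-1}$ with all arcs going $P_i\to P_{i+1}$, so it embeds in the blow-up of a $q$-cycle and its minimum outdegree is at most its order over $q$, hence at most $M/q$; applied to the prime divisors of $p$ and to $p$ itself this forces $p<k$, while a matching count on indegrees rules out the ``bipartite'' value $p=2$ when $k=4$ (an orientation of a bipartite graph has minimum semidegree at most a quarter of its order). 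Thus $p\in\{1,3\}$ when $k=4$ and $p\in\{1,2,3,4\}$ when $k=5$, and in every case $p\mid\ell$ (because $3\mid\ell$, and $12\mid\ell$ when $k=5$). Finally one extracts a bounded family of cycles through a common cluster, of lengths bounded in terms of $k$, whose gcd equals $p$; by the structure of numerical semigroups, every sufficiently large multiple of $p$ — in particular $\ell$, once $\ell$ exceeds the Sylvester--Frobenius bound for the finitely many configurations that can arise — is a non-negative integer combination of these lengths, and the winding principle finishes. The explicit thresholds $\ell\ge42$ and $\ell\ge2550$ are precisely what makes this last step go through.

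\emph{Main obstacle.} Part~(i) is essentially Shen's theorem plus the winding principle, the only real subtlety being the arithmetic that makes the constant $150$ work down to $k=150$. The genuinely hard part is parts~(ii) and~(iii): converting ``$R$ has minimum semidegree just above $M/k$'' into an explicit small family of short cycles through one cluster whose lengths have gcd dividing $\ell$, and then checking that the generated numerical semigroup contains every admissible $\ell\ge42$ (resp.\ $\ell\ge2550$). This case analysis, together with getting the numerical bounds sharp, is where the work lies; the diregularity reduction and the winding principle are standard.
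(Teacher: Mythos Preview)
Your treatment of part~(i) is essentially the paper's proof: regularity reduction, Shen's theorem on the reduced oriented graph to get a cycle of length $<k$, and then winding. The paper packages the regularity step into a separate ``walk-to-cycle'' lemma (Lemma~\ref{lemma:walk2cycle}), but the content is the same.

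For parts~(ii) and~(iii), however, your route diverges from the paper's and contains a real gap. The paper does \emph{not} analyse the period of the reduced graph. Instead, after the regularity reduction it works directly in an oriented graph~$H$ with $\delta^0(H)>|H|/k$ and does two concrete things. First, it finds a \emph{transitive triangle} $xz,xy,zy$ in~$H$ (for $k=5$ this uses Theorem~\ref{andrasfai} of Andr\'asfai--Erd\H{o}s--S\'os to dispose of the triangle-free case by showing~$H$ would then be bipartite and contain a $4$-cycle). Second, it proves an explicit diameter bound (Lemma~\ref{lemma:diam6}): either~$H$ contains a directed triangle, or any two vertices are joined by a path of length at most~$6$ when $k=4$, at most~$50$ when $k=5$. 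Applying this to the pair $(y,x)$ yields a $y$--$x$ path of length $t\le 6$ (resp.\ $t\le 50$), and hence two closed walks of \emph{consecutive} lengths $t+1$ and $t+2$ through a common vertex. Because $\gcd(t+1,t+2)=1$, writing $\ell=a(t+1)+r$ with $0\le r\le t$ and winding $r$ times around the longer cycle and $a-r$ times around the shorter one works as soon as $a\ge t$; this is exactly where $42=6\cdot 7$ and $2550=50\cdot 51$ come from.

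Your period argument, by contrast, does not on its own produce short cycles through a common cluster. Saying that the period~$p$ of a strong component of~$R$ satisfies $p<k$ and $p\mid\ell$ is fine, but the step ``extract a bounded family of cycles through a common cluster, of lengths bounded in terms of~$k$, whose gcd equals~$p$'' is precisely the hard part, and you have not indicated how to do it. The period is the gcd of \emph{all} cycle lengths in the component, which may individually be as large as~$|R|$; to bound them you would need something like the diameter lemma the paper proves. And even granting a diameter bound, an arbitrary bounded set of lengths with gcd~$p$ gives a Frobenius number of order the square of the largest length, which will not recover~$42$ and~$2550$ unless you manufacture consecutive lengths --- which is exactly the transitive-triangle trick. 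So your claim that ``the explicit thresholds $\ell\ge 42$ and $\ell\ge 2550$ are precisely what makes this last step go through'' is unsupported: those numbers arise from the specific construction, not from a generic Frobenius estimate.
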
%
   \COMMENT{With careful analysis it can be shown that
in (ii) only 6, 9, 12, 15, 18, 27 and 33
are not possible with our method. All that is really needed is that for every
pair $t$, $t+1$ with $2\leq t\leq 6$ we can express $\ell$ as a positive linear
sum of $t$ and $t+1$. }
Part~(i) is obtained from Theorem~\ref{thm:shen} via a simple application of the
Regularity lemma for digraphs (see Section~\ref{partial}).
It would be interesting to find a proof which does not rely on the
Regularity lemma. Moreover, part~(i) suggests that one might be able to replace~$\delta^0$
by~$\delta^+$ in Conjecture~\ref{con:short}. Even replacing it in Theorem~\ref{thm:short_cycles} would
be interesting.

In view of Theorem~\ref{thm:short_cycles} and the Caccetta-H\"aggkvist Conjecture
one might wonder whether a minimum semidegree close to $n/3$ also forces a $3$-cycle
through any given vertex. However the next proposition
(whose straightforward proof is given in Section~\ref{sec:short})
shows that the threshold in this case is much higher.

\begin{proposition}\label{prop:3cycles}
\mbox{}\begin{itemize}
\item[(i)] If $G$ is an oriented graph on $n$ vertices with $\delta^0(G)\geq \lceil 2n/5 \rceil$ then for any
vertex $u\in V(G)$ there exists a $3$-cycle containing $u$.%
\COMMENT{introduced the $\lceil$ to make readers aware of the difference in (i) and (ii)}
\item[(ii)] For infinitely many $n$ there exists an oriented graph $G$ on $n$ vertices with
$\delta^0(G)=\floor{2n/5}$ containing a vertex $u$ which
does not lie on a $3$-cycle.
\end{itemize}
\end{proposition}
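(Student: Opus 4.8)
The plan is to treat the two parts separately; both arguments are short.

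\textbf{Part (i).} Fix $u$ and set $A:=N^+(u)$, $B:=N^-(u)$ and $R:=V(G)\setminus(A\cup B\cup\{u\})$. Since $G$ is an oriented graph, $A$ and $B$ are disjoint and avoid $u$, so $|A|+|B|+|R|=n-1$ while $|A|,|B|\ge\delta^0(G)\ge\lceil 2n/5\rceil$. A $3$-cycle through $u$ is exactly a pair $v\in A$, $w\in B$ with an edge $v\to w$, so I may assume there is no edge from $A$ to $B$ and derive a contradiction. For $v\in A$ there is no edge $v\to u$ (since $u\to v$) and no edge from $v$ into $B$, so $N^+(v)\subseteq(A\setminus\{v\})\cup R$; summing over $v\in A$ and using that $G$ is oriented,
\[
|A|\,\lceil 2n/5\rceil\le\sum_{v\in A}d^+(v)\le\binom{|A|}{2}+|A|\,|R|,
\]
whence $\lceil 2n/5\rceil\le(|A|-1)/2+|R|$. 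The symmetric computation with in-neighbourhoods inside $B$ gives $\lceil 2n/5\rceil\le(|B|-1)/2+|R|$.

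Adding these two inequalities and substituting $|R|=n-1-|A|-|B|$ yields, after rearranging, $|A|+|B|\le(4n-6-4\lceil 2n/5\rceil)/3\le 4n/5-2$. But $|A|+|B|\ge 2\lceil 2n/5\rceil\ge 4n/5$, a contradiction; hence some edge runs from $A$ to $B$ and $u$ lies on a $3$-cycle.

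\textbf{Part (ii).} I will exhibit the extremal graph for every $n$ with $n\equiv 4\pmod 5$, say $n=5m+4$, so that $\lfloor 2n/5\rfloor=2m+1$. Take disjoint sets $\{u\}$, $A$, $B$ with $|A|=|B|=2m+1$, and $R$ with $|R|=m+1$. Insert all edges from $u$ to $A$, all edges from $B$ to $u$, all edges from $B$ to $A$, all edges from $A$ to $R$, and all edges from $R$ to $B$; leave $u$ non-adjacent to $R$, leave $R$ independent, and place a regular tournament inside each of $A$ and $B$ (possible since $|A|=|B|$ is odd), so that every vertex of $A$ and of $B$ has out-degree and in-degree exactly $m$ within its own class. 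Between any two of the parts all edges run in a single direction, so $G$ is an oriented graph. There is no $3$-cycle through $u$: such a cycle would require an edge from $N^+(u)=A$ to $N^-(u)=B$, and there is none. Finally $\delta^0(G)=2m+1=\lfloor 2n/5\rfloor$: indeed $d^+(u)=d^-(u)=2m+1$; each $v\in A$ has $d^+(v)=m+|R|=2m+1$ and $d^-(v)=1+|B|+m\ge 2m+1$; each $w\in B$ has $d^-(w)=m+|R|=2m+1$ and $d^+(w)=1+|A|+m\ge 2m+1$; and each $x\in R$ has $d^+(x)=|B|=2m+1$ and $d^-(x)=|A|=2m+1$.

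Neither part has a genuinely hard step; what needs care is that the double counting in (i) is essentially tight, so the extremal graph in (ii) must be perfectly balanced. This is what forces the congruence condition on $n$ and the use of a \emph{regular} tournament inside $A$ and $B$ rather than merely one of large minimum out-degree; a routine variant using a near-regular tournament handles $n\equiv 2\pmod 5$ as well, which is more than enough to produce infinitely many~$n$.
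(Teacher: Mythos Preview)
Your proof is correct and follows essentially the same approach as the paper. For~(i) the paper's argument is marginally shorter: instead of averaging on both $A$ and $B$ and combining, it applies the averaging once to pick a single $x\in N^+(u)$ with $|N^+(x)\setminus N^+(u)|\ge \delta^0(G)-|N^+(u)|/2+1/2$ and then observes $|N^+(u)|+|N^-(u)|+|N^+(x)\setminus N^+(u)|>n$; for~(ii) your construction is (up to relabelling the parts) exactly the paper's.
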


\subsection{Pancyclicity} \label{pancyc}

Building on~\cite{kelly_kuhn_osthus_hc_orient}, Keevash, K\"uhn and Osthus~\cite{KKO_exact} recently
gave an exact minimum semidegree bound which forces a Hamilton cycle in an oriented graph. More precisely,
they showed that every sufficiently large oriented graph~$G$ with $\delta^0(G)\ge (3n-4)/8$
contains a Hamilton cycle.
This is best possible and settles a problem of Thomassen.
The arguments in~\cite{KKO_exact} can easily be modified to
show that~$G$ even contains an $\ell$-cycle for every
$\ell\ge n/10^{10}$ through any given vertex (see~\cite{Lukethesis} for
details). Together with Theorems~\ref{shen_triangle} and~\ref{thm:short_cycles} this implies that~$G$
is \emph{pancyclic}, i.e.~it contains cycles of all possible lengths.%

\begin{theorem}\label{thm:pan}
There exists an integer~$n_0$ such that every oriented graph~$G$ on $n \ge n_0$ vertices with minimum
semidegree $\delta^0(G) \ge (3n-4)/8$ contains an $\ell$-cycle for all $3\le \ell \le n$.
Moreover, if~$4\leq \ell \leq n$ and if~$u$ is any vertex of~$G$ then~$G$ contains an
$\ell$-cycle through~$u$.
\end{theorem}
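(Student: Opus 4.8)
The plan is to deduce Theorem~\ref{thm:pan} by combining three ingredients, each of which handles a range of cycle lengths, under the hypothesis $\delta^0(G) \ge (3n-4)/8$. Note first that $(3n-4)/8 > n/3$ for large~$n$, so the minimum semidegree hypothesis of Theorem~\ref{thm:short_cycles} and of Theorem~\ref{shen_triangle} (which needs only $\delta^+(G) \ge 0.355n$, and $3/8 = 0.375 > 0.355$) are both comfortably satisfied. The three ranges are: very short cycles ($\ell = 3$), short cycles ($4 \le \ell \le n/10^{10}$), and long cycles ($n/10^{10} \le \ell \le n$).

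First I would dispose of $\ell = 3$: Theorem~\ref{shen_triangle} applied to~$G$ gives a directed triangle directly, since $\delta^+(G) \ge \delta^0(G) \ge (3n-4)/8 \ge 0.355n$ for $n$ large. (This is the only length not covered by the `moreover' part, which is why $\ell=3$ is stated separately.) Next, for $4 \le \ell \le n/10^{10}$, I would invoke Theorem~\ref{thm:short_cycles}: since $\delta^0(G) \ge (3n-4)/8 \ge \lfloor n/3\rfloor + 1$ for large~$n$ and $n \ge 10^{10}\ell$ exactly in this range, that theorem yields an $\ell$-cycle, and moreover one through any prescribed vertex~$u$. Finally, for $n/10^{10} \le \ell \le n$, I would cite the modification of the argument of Keevash, K\"uhn and Osthus~\cite{KKO_exact} described in~\cite{Lukethesis}: under $\delta^0(G) \ge (3n-4)/8$ one obtains an $\ell$-cycle through any given vertex~$u$ for every such~$\ell$. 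Stitching these three ranges together covers all $3 \le \ell \le n$ for the existence statement, and all $4 \le \ell \le n$ for the `through~$u$' statement, which is precisely the claim. One should take $n_0$ to be the maximum of the thresholds required by the three cited results (with $\ell$ eliminated in favour of~$n$ where a result has an $\ell$-dependent threshold but is only applied when $n \ge 10^{10}\ell$, so the dependence collapses to an absolute constant).

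The only genuine content here beyond bookkeeping is the long-cycle range, and that is explicitly attributed to~\cite{KKO_exact} and~\cite{Lukethesis} rather than reproved; so the main (and essentially only) obstacle in writing this proof is checking the arithmetic of the overlap, namely that the short-cycle regime $\ell \le n/10^{10}$ and the long-cycle regime $\ell \ge n/10^{10}$ together leave no gap, and that the degree inequality $(3n-4)/8 \ge \lfloor n/3 \rfloor + 1$ and $(3n-4)/8 \ge 0.355n$ both hold once $n \ge n_0$. These are all immediate for large~$n$, so the proof is short. If one wanted to avoid quoting~\cite{Lukethesis} verbatim, the harder alternative would be to re-derive the long-cycle statement from scratch, which would require redoing the absorbing/rotation argument of~\cite{KKO_exact}; I would not attempt that and would instead simply state that the required modification is routine and carried out in~\cite{Lukethesis}.
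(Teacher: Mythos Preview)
Your proposal is correct and matches the paper's own argument essentially line for line: the paper also derives Theorem~\ref{thm:pan} by combining Theorem~\ref{shen_triangle} for $\ell=3$, Theorem~\ref{thm:short_cycles} for $4\le \ell\le n/10^{10}$, and the modification of~\cite{KKO_exact} worked out in~\cite{Lukethesis} for $\ell\ge n/10^{10}$. The arithmetic checks you flag are exactly the ones needed, and there is nothing to add.
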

This improves a bound of Darbinyan \cite{Darbinyan}, who proved
that a minimum semidegree of $\floor{n/2}-1\geq 4$ implies pancyclicity.
Another degree condition which implies pancyclicity in oriented graphs which are close
to being tournaments is given by Song~\cite{Song}.
Proposition~\ref{prop:3cycles} shows that we cannot have~$\ell=3$ in the `moreover' part
of Theorem~\ref{thm:pan}.

For (general) digraphs, Thomassen~\cite{tom} as well as
H\"aggkvist and Thomassen~\cite{haggtom} gave degree conditions
which imply that every digraph with  minimum semidegree $>n/2$
is pancyclic. (The complete bipartite digraph whose vertex
class sizes are as equal as possible shows that the latter
bound is best possible.) Alon and Gutin~\cite{ag} observed that
one can use Ghouila-Houri's theorem~\cite{gh} (which states
that a minimum semidegree of at least $n/2$ guarantees a
Hamilton cycle in a digraph) to show that every digraph $G$
with minimum semidegree $>n/2$ is even vertex-pancyclic,
i.e.~for every $\ell=2,\dots,n$ each vertex of~$G$ lies on an
$\ell$-cycle.

\subsection{Arbitrary orientations of cycles}\label{sec:arbintro}

Recently Kelly~\cite{K_arb} proved the following result on
arbitrary orientations of Hamilton cycles in oriented graphs.

\begin{theorem}\label{thm:arb_ham}
For any~$\alpha>0$ there exists~$n_0=n_0(\alpha)$ such that
every oriented graph~$G$ on~$n\geq n_0$ vertices with minimum
semidegree~$\delta^0(G)\geq (3/8+\alpha)n$ contains every
possible orientation of a Hamilton cycle.
\end{theorem}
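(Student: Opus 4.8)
To prove Theorem~\ref{thm:arb_ham} I would run a regularity-plus-blow-up argument whose goal is to isolate, as the genuinely hard case, the consistently directed Hamilton cycle, and then invoke the exact result quoted in Section~\ref{pancyc}. Fix a small $\alpha>0$ and an orientation~$C$ of the $n$-vertex cycle. Call a vertex of~$C$ a \emph{changepoint} if its two incident edges point both towards it or both away from it; the number of changepoints is an even integer $2t\ge 0$, and deleting them decomposes~$C$ into $2t$ maximal directed paths (for $t=0$, $C$ is the consistently directed Hamilton cycle). If $t=0$ we are done at once, since $(3/8+\alpha)n\ge (3n-4)/8$ and we may apply the theorem of Keevash, K\"uhn and Osthus from Section~\ref{pancyc}. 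So from now on assume $t\ge 1$.

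The main machinery for the remaining cases is the Diregularity Lemma (the directed version of Szemer\'edi's Regularity Lemma). Applying it to~$G$ with parameters $\eps\ll d\ll\alpha$, I obtain a partition of $V(G)$ into a small exceptional set $V_0$ and clusters $V_1,\dots,V_k$ of equal size such that the reduced digraph~$R$ (with an edge~$ij$ whenever $(V_i,V_j)$ is $\eps$-regular of density at least~$d$ from $V_i$ to $V_j$) satisfies $\delta^0(R)\ge (3/8+\alpha/2)k$. The crucial gain from the extra $\alpha n$ in the hypothesis is that $R$ now lies far from the extremal configuration realising the $(3n-4)/8$ bound for the directed Hamilton cycle, so $R$ enjoys strong expansion and connectivity: the spanning structures needed below can be found in~$R$ with room to spare. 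The overall template is then standard: locate a suitable spanning closed walk in~$R$, make its consecutive pairs super-regular by the usual cleaning and redistribution, absorb the vertices of $V_0$ one at a time using the fact that every vertex of~$G$ has many in-neighbours and many out-neighbours in almost every cluster, and finish by applying the Blow-up Lemma cluster by cluster to embed~$C$.

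The shape of the spanning walk, and the difficulty of finding it, depend on~$C$, so I would split the case $t\ge 1$ further according to whether~$C$ contains a directed path of length at least $\eta n$ (for a small constant $\eta=\eta(\alpha)$) or all of its directed paths are shorter than $\eta n$. In the first regime~$C$ is `directed-like': one adapts the proof of the directed Hamilton cycle theorem to produce a near-spanning system of long super-regular directed substructures in~$R$ along which the long directed segments of~$C$ are wound, while the at most $2t$ changepoints and any residual short segments are accommodated by reserving a bounded number of clusters in which the walk can `turn around'. In the second regime~$C$ has many changepoints and one only ever needs short directed sub-walks of~$R$, which gives much more flexibility in choosing the spanning closed walk; this regime includes the anti-directed Hamilton cycle (all directed paths of length~$1$), which requires $n$ even and is the most constrained instance, since the strictly alternating pattern dictates which clusters may play which role.

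I expect the main obstacle to be the first regime when~$t$ is small but positive: there one must graft the `turning' behaviour at the changepoints and the absorption of the exceptional set onto the delicate near-extremal analysis that underpins the $(3n-4)/8$ directed Hamilton cycle result, without destroying the structural features that argument relies on. Handling the anti-directed Hamilton cycle cleanly in the second regime is the other technically fiddly point. The remaining ingredients — the regularity reduction, the incorporation of $V_0$, and the final application of the Blow-up Lemma — are routine once the correct spanning walk in~$R$ has been identified.
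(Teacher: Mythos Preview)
The paper does not contain a proof of Theorem~\ref{thm:arb_ham}. It is quoted as a result of Kelly~\cite{K_arb} (``Recently Kelly~\cite{K_arb} proved the following result\dots'') and is used only as a black box in the derivation of Theorem~\ref{thm:arbpan}. So there is no ``paper's own proof'' to compare your proposal against.

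That said, your sketch is broadly in line with how results of this type are proved: the Diregularity Lemma combined with the Blow-up Lemma, a case distinction according to how many long consistently-directed runs the target cycle has, and the KKO exact result for the purely directed case. Two cautions if you intend to flesh this out. First, the reduced digraph~$R$ produced directly by the Diregularity Lemma need not be oriented; one has to pass to an oriented subgraph of~$R$ with nearly the same minimum semidegree (as in~\cite[Lemma~3.1]{kelly_kuhn_osthus_hc_orient}, which the present paper also invokes in its sketch of Lemma~\ref{lemma:walk2cycle}). Second, the sentence ``$R$ now lies far from the extremal configuration\dots so $R$ enjoys strong expansion and connectivity'' is doing a lot of work: the genuine content of the argument is precisely the construction of the correct spanning closed walk in~$R$ (with controlled visits to each cluster and the right turning points), and this is where the real case analysis lives. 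Your proposal acknowledges this but does not supply it, so as written it is a plausible plan rather than a proof.
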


In this paper we extend this further to a pancyclicity result
for arbitrary orientations: if an oriented graph~$G$ on~$n$
vertices contains every possible orientation of an~$\ell$-cycle
for all $3\le \ell \le n$ we say that~$G$ is \emph{universally
pancyclic}. Our main result on arbitrary orientations says that
asymptotically universal pancyclicity requires the same minimum
semidegree as pancyclicity.

\begin{theorem}\label{thm:arbpan}
For all~$\alpha>0$ there exists an integer~$n_0=n_0(\alpha)$
such that every oriented graph~$G$ on $n \ge n_0$ vertices with
minimum semidegree $\delta^0(G) \ge (3/8+\alpha)n$ is
universally pancyclic.
\end{theorem}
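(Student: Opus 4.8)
The plan is to split the range $3\le \ell\le n$ into three parts, each handled by a different tool, and to fix throughout a small constant $\eta=\eta(\alpha)>0$ chosen much smaller than $\alpha$. \emph{Long cycles} ($\eta n\le \ell\le n$) are reduced to Theorem~\ref{thm:arb_ham}: choose $S\subseteq V(G)$ with $|S|=\ell$ uniformly at random; for each fixed vertex $v$ the quantities $|N^+(v)\cap S|$ and $|N^-(v)\cap S|$ are hypergeometrically distributed with mean at least $(3/8+\alpha)\ell$, so standard concentration bounds give that each exceeds $(3/8+\alpha/2)\ell$ with probability $1-e^{-\Omega(n)}$, and a union bound over all $n$ vertices shows that \whp $G[S]$ is an oriented graph on $\ell\ge \eta n_0$ vertices with $\delta^0(G[S])\ge (3/8+\alpha/2)\ell$. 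Applying Theorem~\ref{thm:arb_ham} inside such a $G[S]$ (with $\alpha/2$ in place of $\alpha$) yields, for any prescribed orientation, a Hamilton cycle of $G[S]$ with that orientation, i.e.\ the desired oriented $\ell$-cycle of $G$.

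\emph{Triangles} ($\ell=3$). There are two orientations of $C_3$. The consistently oriented one is a directed triangle, which exists by Theorem~\ref{shen_triangle} since $(3/8+\alpha)n>0.355n$. For the other — a vertex with two mutually adjacent out-neighbours — take a directed triangle $a\to b\to c\to a$; since $b$ has at least $2(3/8+\alpha)n$ neighbours and $|N^+(a)|\ge (3/8+\alpha)n$, these sets meet in a vertex $d\notin\{a,b,c\}$, and whichever way the edge between $b$ and $d$ is oriented, $\{a,b,d\}$ spans the required orientation.

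\emph{Short cycles} ($4\le \ell<\eta n$). This is the heart of the proof. Fix an orientation $\vec C$ of $C_\ell$; if $\vec C$ is the directed cycle we are done by Theorem~\ref{thm:short_cycles}, so assume not. Read $\vec C$ as a cyclic concatenation of its maximal directed segments, with turning points $s_1,t_1,\dots,s_m,t_m$ (alternately sources and sinks, $m\ge 1$) in cyclic order; it then suffices to choose distinct images of the turning points in $G$ and join consecutive ones by internally disjoint directed paths of the exact prescribed lengths, the last path closing the cycle. (Equivalently, delete an edge of $\vec C$ to obtain an oriented path $\vec P$ on $\ell$ vertices and embed $\vec P$ so that its endpoints are joined in $G$ by an edge of the correct direction.) The key tool is a linking lemma: for some $\beta=\beta(\alpha)>0$, after deleting any set $W$ of at most $\beta n$ vertices, $G-W$ contains a directed $u$--$v$ path of every length between $2$ and $\beta n$ for all but a controlled family of ``bad'' pairs $(u,v)$, and the turning points can be chosen one at a time so that consecutive ones have linear-sized common in-/out-neighbourhoods and hence form good pairs. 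Iterating this $2m$ times, deleting the at most $\ell<\eta n\le\beta n$ vertices already used (so $\eta\le\beta$), produces the embedding.

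The main obstacle is precisely this linking lemma. Since $(3/8+\alpha)n$ lies well below the threshold $n/2$ at which short directed paths between arbitrary pairs become automatic, naive greedy or counting arguments fail — there are oriented graphs of semidegree $(3/8+\alpha)n$ with pairs admitting no short directed path between them — so one must exploit the slack between $(3/8+\alpha)n$ and $n/3$ more carefully. I expect this to be handled by the extremal/non-extremal dichotomy underlying Theorem~\ref{thm:arb_ham} from~\cite{K_arb}: in the non-extremal case the robust-expansion property of $G$ supplies directed paths of all required lengths between almost all pairs, while in the extremal case $G$ is close to a blow-up of a triangle and the oriented cycle is located by hand; alternatively one adapts the regularity-based argument of~\cite{K_arb},~\cite{Lukethesis} directly, applying the Regularity Lemma for digraphs, reserving a connecting (absorbing) structure, embedding most of the directed segments of $\vec C$ in the reduced digraph and absorbing the rest. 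Either way, essentially all the work lies here; the long-cycle and triangle cases are routine given the cited results.
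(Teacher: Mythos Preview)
Your long-cycle argument (random subset plus Theorem~\ref{thm:arb_ham}) and your triangle argument are both fine and essentially match the paper. The gap is in the middle range, and it stems entirely from where you put the split.

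You split at $\eta n$, which forces you to cover the whole range $4\le\ell<\eta n$ by a linking lemma that you do not prove and that, as you yourself observe, is genuinely delicate at semidegree $3n/8$. The paper avoids this completely by splitting at a \emph{constant} $\ell_0=\ell_0(\alpha)$ rather than at a linear threshold. The point you are missing is that the random-sampling reduction to Theorem~\ref{thm:arb_ham} already works whenever $\ell$ exceeds a fixed constant depending only on~$\alpha$, not merely when $\ell\ge\eta n$. Concretely, the paper uses an iterated halving lemma (Lemma~\ref{lemma:random_split}): passing from $2^k\ell$ vertices down to~$\ell$ vertices in $k$ steps costs only an additive $\ell^{-3/8}$ in the semidegree proportion, independently of~$k$, so as soon as $\ell\ge\max\{n_0(\alpha/3),(6/\alpha)^{8/3}\}$ the resulting subgraph on~$\ell$ vertices still satisfies the hypothesis of Theorem~\ref{thm:arb_ham}. (In fact your one-shot sampling can be pushed down to constant~$\ell$ as well, provided you union-bound over the~$\ell$ vertices of~$S$, weighted by $\Pr[v\in S]=\ell/n$, rather than over all~$n$ vertices; your stated $e^{-\Omega(n)}$ should be $e^{-\Omega(\ell)}$.)

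This leaves only cycles of length at most the constant~$\ell_0$, and these are exactly what Proposition~\ref{prop:arb_short} handles: one exhibits a bounded walk~$W$ (a short directed cycle, or a short path with one or two transitive triangles or a \texttt{fffb} $4$-cycle attached) into which the given oriented $\ell$-cycle has a homomorphism, finds~$W$ greedily from the degree condition, and then lifts it to the actual cycle via the Regularity lemma (Lemma~\ref{lemma:walk2cycle}(iv)). No linking lemma, no extremal/non-extremal dichotomy, and no absorbing structure is needed anywhere in the proof.
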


As with standard orientations, if we look only at short cycles
then we can strengthen the minimum semidegree condition in the above
result. The semidegree required will depend on the so-called cycle-type.
Given an arbitrarily oriented $\ell$-cycle $C$, the
\emph{cycle-type}~$t(C)$ of~$C$ is the number
of edges oriented forwards in~$C$ minus the number of edges
oriented backwards in~$C$. By traversing~$C$ in the opposite direction if necessary,
we may assume that $t(C)\ge 0$. An oriented~$\ell$-cycle
has cycle-type~$\ell$. Arbitrarily oriented cycles of
cycle-type~0 are precisely those for which there is a digraph
homomorphism into an oriented path. Moreover, if $t(C)\geq 3$ then $t(C)$ is the
\emph{maximum} length of an oriented cycle into which there is
a digraph homomorphism of~$C$.

\begin{proposition}\label{prop:arb_short}
\mbox{}\begin{itemize}
\item Let $\ell\geq 4$ and
    let~$\alpha>0$. Then there exists~$n_0=n_0(\ell,\alpha)$ such
    that every oriented graph~$G$ on~$n\geq n_0$ vertices with
    minimum semidegree~$\delta^0(G)\geq (1/3+\alpha)n$ contains
    every orientation of an~$\ell$-cycle.
\item Let~$\alpha>0$ and let~$\ell$ be some positive constant.
    Then there exists~$n_0=n_0(\alpha,\ell)$ such that every
    oriented graph~$G$ on~$n\geq n_0$ vertices with minimum
    semidegree~$\delta^0(G)\geq \alpha n$ contains every
    cycle of length at most~$\ell$ and cycle-type 0.
\end{itemize}
\end{proposition}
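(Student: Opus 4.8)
The plan is to prove both parts by the same two-step scheme. First I would apply the Regularity lemma for digraphs to~$G$ and pass to a reduced digraph~$R$ on~$k$ clusters, with~$k$ large, each cluster of size~$\approx n/k$, and parameters $0<\eps\ll d\ll\alpha$, whose edges record the $\eps$-regular pairs of density at least~$d$; then $\delta^0(R)\ge(\delta^0(G)/n-d)k$. Second I would realise the given oriented cycle~$C$ (on~$\ell$ vertices) as a \emph{homomorphic image} in~$R$, i.e.\ find a closed walk $W_1W_2\cdots W_\ell W_1$ in~$R$ whose $i$-th step traverses an edge of~$R$ in the direction in which the $i$-th edge of~$C$ is oriented, and which uses each cluster at most~$\ell$ times. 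Since~$\ell$ is constant and the clusters are huge, such a closed walk lifts to an actual, injective, copy of~$C$ in~$G$ by the standard greedy embedding argument for $\eps$-regular pairs: at each of the $\le\ell$ steps one has $\Omega(n)$ admissible choices and at most~$\ell$ vertices to avoid. So in both parts the work is to produce the homomorphism $C\to R$.

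For the second part this is essentially immediate. As~$C$ has cycle-type~$0$, by the discussion above it admits a digraph homomorphism into an oriented path; composing with the evident homomorphism of an oriented path onto any directed cycle,~$C$ admits a homomorphism onto the directed cycle of length~$g$ for every~$g\ge1$. Now $\delta^0(R)\ge(\alpha/2)k>0$, so~$R$ contains a directed cycle~$Z$ (of bounded length, e.g.\ by Theorem~\ref{thm:shen}), and the composite homomorphism $C\to Z\subseteq R$ is the desired closed walk: its image lies on~$Z$, so each cluster is used at most~$\ell$ times, and it lifts as above. One only has to check the routine facts that~$R$ inherits a positive minimum semidegree and that the embedding argument tolerates a bounded amount of cluster reuse.

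For the first part one must realise an arbitrary orientation pattern, i.e.\ an arbitrary cycle-type $t:=t(C)\in\{0,1,\dots,\ell\}$ of total length~$\ell$, in~$R$, where now $\delta^0(R)\ge(1/3+\alpha/2)k$. After a routine preliminary step dealing with the digons of~$R$ (they do not obstruct the argument, since~$G$ is itself an oriented graph with $\delta^0(G)>n/3$) and after rewriting~$C$ as a cyclic concatenation of its maximal monotone directed subpaths --- so that~$C$ becomes a ``necklace'' of directed paths running between its sources and sinks --- I would proceed as follows. If $t\ge4$, then Theorem~\ref{thm:short_cycles} applied to~$R$ (valid, as~$R$ is then an oriented graph with $\delta^0(R)\ge\lfloor k/3\rfloor+1$) supplies a directed $t$-cycle in~$R$, and as $t\ge3$ the discussion above then gives a homomorphism of~$C$ onto this $t$-cycle, hence into~$R$, which lifts. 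This leaves the small cycle-types $t\in\{0,1,2,3\}$; here $t=0$ is handled as in the second part, and for $t\in\{1,2,3\}$ one cannot fold~$C$ onto a directed cycle of length~$t$ in~$R$ (for $t=3$ this would need a directed triangle in~$R$, which we cannot rely on, as it would resolve a case of the Caccetta-H\"aggkvist conjecture), so instead I would place the source- and sink-clusters of the necklace in~$R$ and realise each directed subpath of~$C$ by a directed path of exactly the prescribed length between the prescribed clusters. This is precisely where the hypothesis $\delta^0(G)>n/3$ is used: the blow-up of a directed triangle, which has $\delta^0=\lfloor n/3\rfloor$, contains an orientation of~$C_\ell$ only when its cycle-type is divisible by~$3$ --- for instance it contains no directed path of length~$2$ between two vertices of one class, nor of length~$3$ between vertices of different classes --- so one must exploit the slack~$\alpha$, through the same richness in short directed cycles and short directed paths of prescribed length (for digraphs with $\delta^0$ a little above $k/3$) that underlies Theorem~\ref{thm:short_cycles}, to realise these remaining patterns. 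Note that $\ell\ge4$ is used here, so that one is never forced to find an actual directed triangle in~$G$.

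The main obstacle is this last step: realising the cycle-types $t\in\{1,2,3\}$ at minimum semidegree just above~$n/3$, exactly the Caccetta-H\"aggkvist-flavoured regime in which the directed-triangle blow-up is the extremal configuration. Everything else --- the Regularity reduction, the lifting via the embedding argument, and the whole of the second part --- is routine; it is the analysis of digraphs with $\delta^0$ a little above $k/3$ (their richness in short directed cycles, and the routing of short directed paths of prescribed lengths between prescribed clusters) that carries the real content of Proposition~\ref{prop:arb_short}(i), and one expects the connectivity lemmas underlying the proof of Theorem~\ref{thm:short_cycles} to supply most of what is needed.
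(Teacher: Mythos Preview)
Your overall scheme---apply the digraph Regularity lemma, realise $C$ as a homomorphic image of a walk in the reduced oriented graph, then lift via the embedding lemma---is exactly the paper's approach (packaged there as Lemma~\ref{lemma:walk2cycle}(iv)). Your handling of the second bullet, and of cycle-types $t(C)=0$ and $t(C)\ge 4$ in the first bullet, agrees with the paper's.

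The real gap is at $t(C)\in\{1,2\}$, where your plan---fix source- and sink-clusters and route each maximal directed segment of $C$ by a directed path of exactly the prescribed length between prescribed clusters in $R$---is not shown to work and is not obviously feasible. Near the $|R|/3$ threshold, oriented graphs need not contain directed paths of a specified length between specified vertices: in (a perturbation of) the directed-triangle blow-up, paths from $V_i$ to $V_j$ only have lengths $\equiv j-i\pmod 3$. The connectivity lemma behind Theorem~\ref{thm:short_cycles} (Lemma~\ref{lemma:34or5}) gives a path of length $3$, $4$ \emph{or} $5$, not a specified one of these, so ``the lemmas underlying Theorem~\ref{thm:short_cycles}'' do not directly execute your routing plan.

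The paper's device here is quite different and avoids routing altogether. One writes down, once and for all, a fixed small walk $W$ depending only on the orientation pattern of $C$, shows $C\to W$, and shows $W$ occurs in every oriented graph $H$ with $\delta^0(H)\ge(1/3+\alpha')|H|$. Since $t(C)\ge 1$, $C$ must contain a consecutive pattern \texttt{ffb}; for $t=1$ take $W$ to be a directed path with a single transitive triangle attached at one vertex, mapping the \texttt{ffb} segment onto the triangle (so that its two endpoints coincide) and the remainder of $C$, which now has net type $0$, onto the path. For $t=2$, $C$ contains two \texttt{ffb}'s or one \texttt{fffb}, and $W$ is a path with two transitive triangles attached, or with one $4$-cycle oriented \texttt{fffb} attached. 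Finding such $W$ in $H$ is elementary: $|N^+_H(x)|>|H|-2\delta^0(H)$, so each outneighbourhood spans an edge and transitive triangles are available for free; a two-line case split on whether $H[N^+(x)]$ contains a directed $2$-path produces the \texttt{fffb} $4$-cycle. No path-length control between prescribed vertices is needed.

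(For $t=3$ the paper simply lumps this with $t\ge 3$ and appeals to Theorem~\ref{thm:short_cycles} for a directed $t$-cycle. Your caution is justified: Theorem~\ref{thm:short_cycles} is stated for cycle length $\ge 4$, so strictly speaking $t=3$ requires the same gadget idea pushed one step further rather than a directed triangle in $R$.)
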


In Section~\ref{sec:arb} we will derive the universal pancyclicity result (Theorem~\ref{thm:arbpan})
by combining the short-cycle result
(Proposition~\ref{prop:arb_short}) with a probabilistic
argument applied to Theorem~\ref{thm:arb_ham} giving all long cycles.

Conjecture~\ref{con:short} has a natural strengthening to
incorporate arbitrarily oriented cycles.

\begin{conjecture}\label{con:arb_short}
Let~$C$ be an arbitrarily oriented cycle of length~$\ell\geq 4$
and cycle-type~$t(C)\geq 4$. Let~$k$ be the smallest integer which is greater than~$2$ and
does not divide~$t(C)$. Then there exists an integer
$n_0=n_0(\ell,k)$ such that every oriented graph~$G$ on $n\geq
n_0$ vertices with minimum semidegree $\delta^0(G)\geq \lfloor
n/k\rfloor+1$ contains~$C$.
\end{conjecture}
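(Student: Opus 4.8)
The plan is to follow the stability-plus-extremal-case strategy underlying Theorem~\ref{thm:short_cycles}, now applied to the given arbitrarily oriented cycle~$C$ of length~$\ell$ and cycle-type $t:=t(C)\geq 4$. Write~$a$ and~$b$ for the numbers of forward and backward edges of~$C$, so that $a+b=\ell$, $a-b=t$, and hence $\ell\equiv t\pmod 2$. Since~$k$ is by definition the smallest integer greater than~$2$ that does not divide~$t$, every integer in $\{3,\dots,k-1\}$ divides~$t$; thus~$k$ is exactly the parameter Conjecture~\ref{con:short} would attach to the directed $t$-cycle, and one should expect the two conjectures to need the same machinery. I will also use (as recorded before Proposition~\ref{prop:arb_short}) that, since $t\geq 3$, there is a digraph homomorphism from~$C$ onto the directed $t$-cycle, and hence, composing with the quotient map onto the directed $m$-cycle $C_m$, onto~$C_m$ for every divisor $m\geq 3$ of~$t$.

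First I would apply the Diregularity Lemma to~$G$, obtaining for a small $\eps>0$ a reduced multidigraph~$R$ on boundedly many clusters with $\delta^0(R)\geq(1/k-\eps)|R|$, whose edges are the dense $\eps$-regular cluster pairs (in a single direction, since~$G$ is oriented). Then I would split into two cases. In the \emph{non-extremal case} the aim is to locate inside~$R$, hence inside~$G$, a robust flexible structure into which~$C$ embeds: for instance a short cyclic sequence of clusters, of some length~$m$ with $m\mid t$, each dominating the next through a dense regular pair (equivalently a blow-up of~$C_m$), or the kind of ``directed path plus directed triangle'' structure used in the proof of Proposition~\ref{prop:arb_short}. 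Given such a structure one finishes directly: place the~$\ell$ vertices of~$C$ into clusters according to the homomorphism $C\to C_m$ above and choose them greedily, using $\eps$-regularity to keep them distinct and correctly joined --- routine, since~$\ell$ is constant while the clusters have linear size. The only real choice, namely how many times the embedding winds around the cyclic cluster sequence, is exactly the freedom needed to realise the forward and backward runs of~$C$, and this is where $\ell\equiv t\pmod 2$ is used.

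In the \emph{extremal case} $G$ is within $o(n)$ edge modifications of the blow-up of the directed $k$-cycle~$C_k$; this is the only obstruction one must defeat, because in a blow-up of~$C_k$ the net displacement around any embedded copy of the underlying cycle of~$C$ equals $t\not\equiv 0\pmod k$. One now exploits that $\delta^0(G)\geq\lfloor n/k\rfloor+1$ strictly exceeds the semidegree $\lfloor n/k\rfloor$ of the extremal graph, so~$G$ must carry ``defect'' edges departing from the ideal $C_k$-pattern. The crucial point is that a suitable supply of defect edges --- in fact a single one of the right ``type'' modulo~$k$ --- lets one correct the net-displacement sum from~$t$ to a multiple of~$k$, and thereby route a copy of~$C$ inside the $C_k$-blow-up as before; hence if~$C$ is absent the defect edges must be severely restricted in type, and a standard stability bootstrap then drives~$G$ all the way to the blow-up of~$C_k$ itself, contradicting the surplus in the degree hypothesis. (The hypothesis $t\geq 4$ rather than $t\geq 3$ is natural here: a cycle-type-$3$ cycle maps homomorphically onto a directed triangle, so forcing it ought to require semidegree near the Caccetta-H\"aggkvist value $n/3$ rather than $n/4$.)

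The main obstacle is that the dichotomy just described rests on a stability statement --- an oriented graph with $\delta^0$ only just above~$n/k$ that contains no robust blow-up of a short directed $m$-cycle with $m\mid t$ must be close to the blow-up of~$C_k$ --- and for $k=p^s\geq 4$ no such statement is known; indeed this is essentially the obstacle to Conjecture~\ref{con:short} itself, whose standard-orientation case ($C$ the directed $\ell$-cycle, $t=\ell$) is already open. For $k=3$ the required input is available: combining the extremal analysis behind Theorem~\ref{thm:short_cycles} with the short-cycle embedding of Proposition~\ref{prop:arb_short} should give the exact bound $\lfloor n/3\rfloor+1$ for every~$C$ whose cycle-type is not divisible by~$3$, the forward/backward run structure causing no real trouble once a robust blow-up of the directed triangle is in hand. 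In general the realistic programme is therefore: (a) first prove the arbitrary-orientation analogue of Theorem~\ref{thm:weakconj}, namely the same conclusion with $\lfloor n/k\rfloor+1$ relaxed to $n/k+\eps n$ (and to $n/k+150n/k^2$ for large~$k$), which ought to follow by feeding Theorem~\ref{thm:shen} and the embedding step above into the Diregularity Lemma much as in Section~\ref{partial}; and (b) obtain the genuinely exact bound only for those~$k$ --- at present $k=3$, and conjecturally $k=4,5$ modulo the corresponding stability input --- for which the near-extremal structure of oriented graphs of semidegree $\approx n/k$ is understood. Treating the exact bound uniformly in~$k$ is the crux, and, as for Conjecture~\ref{con:short}, appears to require a new idea.
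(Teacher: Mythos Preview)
The statement in question is a \emph{conjecture}; the paper does not prove it and does not claim to. What the paper does establish (at the end of Section~\ref{sec:arb}, Subsection~\ref{subsec:arb_short}) is only the conditional remark that Conjecture~\ref{con:short} would imply an \emph{approximate} version of Conjecture~\ref{con:arb_short}: since $t(C)\ge 3$, there is a digraph homomorphism from~$C$ onto the directed $t(C)$-cycle, so once the reduced oriented graph contains a directed $t(C)$-cycle, Lemma~\ref{lemma:walk2cycle}(iv) produces a copy of~$C$ in~$G$. Your proposal is therefore not to be compared against a proof in the paper --- there is none --- but against this short observation.

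You correctly identify the essential point: proving Conjecture~\ref{con:arb_short} in general is at least as hard as Conjecture~\ref{con:short} (take $C$ to be the directed $\ell$-cycle), and the missing ingredient is a structure/stability statement for oriented graphs with $\delta^0\approx n/k$ when $k\ge 4$. Your programme (a) --- relaxing to $n/k+\eps n$ via the Regularity lemma and the homomorphism $C\to C_{t(C)}$ --- is exactly the paper's route to its conditional remark, so there is no disagreement there. Where you go beyond the paper is the proposed ``extremal case'' analysis for the exact bound; that is a reasonable heuristic, but it is speculation rather than a proof, and you rightly flag it as such.

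One genuine inaccuracy: you describe ``the stability-plus-extremal-case strategy underlying Theorem~\ref{thm:short_cycles}'', but that theorem is \emph{not} proved by stability plus an extremal analysis. Its proof (Lemmas~\ref{lemma:4}--\ref{lemma:7} and in particular the butterfly construction together with the short-path Lemma~\ref{lemma:34or5}) is a direct combinatorial argument with no Regularity lemma and no near-extremal case. Consequently your claim that ``for $k=3$ the required input is available'' by combining that extremal analysis with Proposition~\ref{prop:arb_short} does not follow from anything in the paper: Proposition~\ref{prop:arb_short} gives only the approximate bound $(1/3+\alpha)n$ for arbitrary orientations, and the exact $\lfloor n/3\rfloor+1$ bound is established only for \emph{directed} $\ell$-cycles. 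The exact $k=3$ case of Conjecture~\ref{con:arb_short} for non-directed orientations is not settled here.
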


As we shall see in Section~\ref{sec:arb}, Conjecture~\ref{con:short} would
imply an approximate version of Conjecture~\ref{con:arb_short}.

\subsection{Cycles of given length in digraphs}

A straightforward application of the Regularity lemma shows that a solution to Conjecture~\ref{con:short}
would also asymptotically solve the corresponding problem for general digraphs:
Let $\delta_{di}(\ell,n)$ denote the smallest integer $d$ so that every digraph with $n$ vertices
and minimum semidegree at least~$d$ contains an $\ell$-cycle and let $\delta_{orient}(\ell,n)$
denote the smallest integer $d$ so that every oriented graph with $n$ vertices
and minimum semidegree at least~$d$ contains an $\ell$-cycle.
\begin{proposition} \label{digraphmindeg}
For any $\ell \ge 3$,
$$
\lim_{n \to \infty} \frac{\delta_{di}(\ell,n)}{n}=
\begin{cases}
1/2 & \text{ if } \ell \text{ is odd};\\
\lim_{n \to \infty} \frac{\delta_{orient}(\ell,n)}{n} &\text{ otherwise}.
\end{cases}
$$
\end{proposition}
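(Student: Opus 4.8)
The plan is to treat the two cases of the claimed formula separately.

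\textbf{Odd $\ell$.} Here I would prove the sharper statement that $\delta_{di}(\ell,n)=\lfloor n/2\rfloor+1$ for all sufficiently large $n$. For the lower bound, take the complete bipartite digraph $G$ with vertex classes of sizes $\lceil n/2\rceil$ and $\lfloor n/2\rfloor$ and all edges present in both directions between the classes: then $\delta^0(G)=\lfloor n/2\rfloor$ and, since $G$ is bipartite, it contains no cycle of odd length, so $\delta_{di}(\ell,n)\ge\lfloor n/2\rfloor+1$. For the upper bound, recall (as quoted in Section~\ref{pancyc}) that by Ghouila-Houri's theorem together with the observation of Alon and Gutin, every digraph on $n$ vertices with minimum semidegree $>n/2$ is vertex-pancyclic, and hence contains an $\ell$-cycle whenever $\ell\le n$; since $\lfloor n/2\rfloor+1>n/2$ this gives $\delta_{di}(\ell,n)\le\lfloor n/2\rfloor+1$. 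Dividing by $n$ and letting $n\to\infty$ yields the limit $1/2$.

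\textbf{Even $\ell$ (so $\ell\ge4$).} Write $c:=\lim_n\delta_{orient}(\ell,n)/n$. One inequality is immediate: every oriented graph is a digraph, so $\delta_{di}(\ell,n)\ge\delta_{orient}(\ell,n)$, whence $\liminf_n\delta_{di}(\ell,n)/n\ge c$. For the reverse inequality I would apply the Regularity lemma for digraphs. Fix $\eps>0$ and choose a lower bound $M'$ on the number of clusters large enough that $\delta_{orient}(\ell,k)\le(c+\eps/2)k$ for all $k\ge M'$. Apply the degree form of the directed Regularity lemma to an arbitrary digraph $G$ on $n$ (large) vertices with $\delta^0(G)\ge(c+\eps)n$, and let $R$ be the resulting reduced digraph on $k$ clusters, where $M'\le k\le M(\eps,M')$; choosing the regularity and density parameters small relative to $\eps$, a standard calculation gives $\delta^0(R)\ge(c+\eps/2)k$.

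Now I distinguish two cases for $R$. If $R$ contains a double edge, i.e.\ a pair of clusters $V_a,V_b$ for which both $(V_a,V_b)$ and $(V_b,V_a)$ are $\eps$-regular of density exceeding the density threshold, then, since $\ell$ is even and at least $4$, one may greedily embed into $G$ an $\ell$-cycle alternating between $V_a$ and $V_b$ and using $\ell/2$ vertices of each; this is routine because the clusters are huge compared with $\ell$. Otherwise $R$ has no double edge, so $R$ is an oriented graph; since $\delta^0(R)\ge(c+\eps/2)k\ge\delta_{orient}(\ell,k)$ and $k\ge M'$, the digraph $R$ contains an $\ell$-cycle, and the usual embedding lemma for $\eps$-regular pairs converts it into an $\ell$-cycle of $G$. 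Either way $G$ contains an $\ell$-cycle, so $\delta_{di}(\ell,n)\le(c+\eps)n$ for all large $n$, and hence $\limsup_n\delta_{di}(\ell,n)/n\le c$. Together with the trivial bound this gives $\lim_n\delta_{di}(\ell,n)/n=c=\lim_n\delta_{orient}(\ell,n)/n$.

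\textbf{Main obstacle.} The only genuinely delicate point is the bookkeeping for the Regularity lemma: fixing the constant hierarchy $d\ll\eps\ll 1$ together with $M'$ in the right order, and checking that the reduced digraph inherits minimum semidegree within $\eps$ of that of $G$ (which is why one needs the degree form, discarding a sparse error subdigraph). The two blow-up steps — embedding an even cycle inside a double edge, and embedding a cycle of $R$ into $G$ — are entirely standard consequences of the defining properties of $\eps$-regular pairs once $\eps$ is small relative to the density threshold, and carry no real difficulty. I would also note that the argument tacitly uses the existence of $\lim_n\delta_{orient}(\ell,n)/n$, which is built into the statement; in fact the sandwich above shows that whenever this oriented limit exists, the digraph limit exists as well and equals it.
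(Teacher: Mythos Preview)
Your proof is correct and follows essentially the same route as the paper's. The paper packages the even-$\ell$ step slightly differently: it first observes that every digraph $H$ with $\delta^0(H)\ge\delta_{orient}(\ell,n)$ contains a closed walk of length $\ell$ (a double edge gives one immediately since $\ell$ is even; otherwise $H$ is oriented and has an $\ell$-cycle by definition), and then invokes Lemma~\ref{lemma:walk2cycle}(iii), which hides the Regularity-lemma argument you spell out; you simply inline that lemma and do the double-edge case-split on the reduced digraph rather than on the hypothetical $H$. For odd $\ell$ the paper cites the H\"aggkvist--Thomassen density bound~(\ref{density}) for the upper bound rather than the Alon--Gutin vertex-pancyclicity observation you use, but both are quoted in the paper and either suffices.
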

It is easy to see that these limits exist.\footnote{Suppose for example that
$\lim_{n\to\infty} \delta_{orient}(\ell,n)/n$ does not exist.
Then there is an $\eps>0$ such that for every $n'\in \N$ there exist $n_2>
n_1\ge n'$ with
$c_2:=\delta_{orient}(\ell,n_2)/n_2\ge \delta_{orient}(\ell,n_1)/n_1+\eps=:c_1+\eps$.
Let $G_2$ be any oriented graph on~$n_2$ vertices with $\delta^0(G_2)\ge
c_2n_2-1$ (say) which does not contain an $\ell$-cycle.
Pick a random set~$X\subseteq V(G_2)$ of size~$n_1$.
Then $G_2[X]$ has minimum semidegree at least $(c_2-\eps/2)n_1$,
contradicting the fact that
$\delta_{orient}(\ell,n_1)/n_1=c_1$.}
We will prove Proposition~\ref{digraphmindeg} in Section~\ref{partial}.
The corresponding density
problem for digraphs was solved by H\"aggkvist and Thomassen.
Let $ex_{di}(\ell,n)$ denote the largest number $d$ so that
there is digraph with~$n$ vertices and at least $d$ edges which
contains no $\ell$-cycle. H\"aggkvist and
Thomassen~\cite{haggtom} proved that
\begin{equation}\label{density}
ex_{di}(\ell,n)=\binom{n}{2}+\frac{(\ell-2)n}{2}.
\end{equation}
The case $\ell=3$ was proved earlier by Brown and
Harary~\cite{brownharary}. A transitive tournament (i.e.~an
acyclic orientation of a complete graph) shows that it does not
make sense to consider this density problem for oriented
graphs. More general extremal digraph problems are discussed in
the surveys~\cite{brownsimonovits, survey}.

\section{Notation}\label{sec:notation}

Given two vertices~$x$ and~$y$ of a digraph~$G$, we write~$xy$
for the edge directed from~$x$ to~$y$. The \emph{order}~$|G|$
of~$G$ is the number of its vertices. We write~$N^+_G(x)$ for
the outneighbourhood of a vertex~$x$ and $d^+(x):=|N^+_G(x)|$
for its outdegree. Similarly, we write~$N^-_G(x)$ for the
inneighbourhood of~$x$ and $d^-(x):=|N^-_G(x)|$ for its
indegree. Given $X\subseteq V(G)$ we denote $|N^+_G(x)\cap X|$
by $d^+_X(x)$, and define $d^-_X(x)$ similarly. We write
$N_G(x):=N^+_G(x)\cup N^-_G(x)$ for the neighbourhood of~$x$.
We use~$N^+(x)$ etc.~whenever this is unambiguous. Given a
set~$A$ of vertices of~$G$, we write $N^+_G(A)$ for the set of
all outneighbours of vertices in~$A$. So~$N^+_G(A)$ is the
union of $N^+_G(a)$ over all $a\in A$. $N^-_G(A)$ is defined
similarly. The directed subgraph of~$G$ induced by~$A$ is
denoted by~$G[A]$ and we write~$e(A)$ for the number of its
edges. $G-A$ denotes the digraph obtained from~$G$ by
deleting~$A$ and all edges incident to~$A$.

When referring to paths and cycles in digraphs we always mean
that they are directed without mentioning this explicitly.
Given two vertices $x,y$ of a digraph~$G$, an \emph{$x$-$y$
path} is a directed path which joins~$x$ to~$y$. Given two
subsets~$A$ and~$B$ of vertices of~$G$, an~$A$-$B$ edge is an
edge~$ab$ where $a\in A$ and $b\in B$. We write $e(A,B)$ for
the number of all these edges. A \emph{walk} in~$G$ is a
sequence $v_1v_2\dots v_{\ell}$ of (not necessarily distinct)
vertices, where $v_iv_{i+1}$ is an edge for all $1\leq i<\ell$.
The length of a walk is~$\ell-1$. The walk is \emph{closed} if
$v_1=v_\ell$. Given two vertices $x,y$ of~$G$, the
\emph{distance $dist(x,y)$ from~$x$ to~$y$} is the length of
the shortest $x$-$y$ path. The \emph{diameter} of $G$ is
the maximum distance between any ordered pair of vertices.

\section{Proofs of Theorem~\ref{thm:short_cycles} and Proposition~\ref{prop:3cycles}}\label{sec:short}

We begin with two immediate facts about oriented graphs which will prove very useful.

\begin{fact}\label{fact:avg}
If $G$ is an oriented graph
and $X\subseteq V(G)$ is non-empty then $e(X)\leq |X|(|X|-1)/2$.
In particular, there exists $x\in X$ with $|N^+(x)\cap X|\leq |X|/2-1/2$
and thus
$|N^+(X)\sm X|\geq  |N^+(x)\sm X|\geq \delta^0(G)  -  |X|/2+1/2.$ \qed
\end{fact}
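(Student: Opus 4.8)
The plan is to prove the three assertions in sequence, each following from the previous by an elementary counting argument. First I would establish $e(X)\leq |X|(|X|-1)/2$: since $G$ is an oriented graph, there is at most one edge between any pair of vertices of~$X$, so the number of edges spanned by~$X$ is at most the number of unordered pairs, which is $\binom{|X|}{2}=|X|(|X|-1)/2$.

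Next I would observe that $\sum_{x\in X}|N^+(x)\cap X|=e(X)$, since every edge of~$G[X]$ is counted exactly once on the left-hand side, namely at its tail. Combining this identity with the bound on $e(X)$ and averaging over the $|X|$ vertices of~$X$ yields some $x\in X$ with $|N^+(x)\cap X|\leq (|X|-1)/2=|X|/2-1/2$.

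Finally, for this particular vertex~$x$ I would write $|N^+(x)\sm X|=d^+(x)-|N^+(x)\cap X|\geq \delta^0(G)-|X|/2+1/2$, using $d^+(x)\geq \delta^+(G)\geq\delta^0(G)$ together with the bound just obtained. Since $x\in X$, we have $N^+(x)\sm X\subseteq N^+(X)\sm X$, and hence $|N^+(X)\sm X|\geq |N^+(x)\sm X|$, which closes the chain of inequalities.

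There is no genuine obstacle here; the only point requiring a little care is the averaging step, where one uses $|X|\geq 1$ so that dividing by~$|X|$ is legitimate — this is precisely the non-emptiness hypothesis on~$X$.
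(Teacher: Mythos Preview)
Your argument is correct and is exactly the intended elementary counting/averaging proof; the paper regards the fact as immediate and states it without proof (marked \qed), so there is nothing further to compare.
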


\begin{fact}\label{fact:ind}
If $G$ is an oriented graph on $n$ vertices
then the maximum size of an independent set is at most $n-2\delta^0(G)$. \qed
\end{fact}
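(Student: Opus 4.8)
The plan is to take an independent set $I$ in the oriented graph $G$ and count edges between $I$ and its complement. Since $I$ is independent, $G[I]$ has no edges, so every out- and in-neighbour of a vertex of $I$ lies in $V(G)\setminus I$. Summing the outdegree condition $d^+(x)\ge\delta^0(G)$ over all $x\in I$ gives $e(I, V(G)\sm I)\ge |I|\,\delta^0(G)$, and symmetrically summing indegrees gives $e(V(G)\sm I, I)\ge |I|\,\delta^0(G)$. Since $G$ is an oriented graph, between any ordered pair of disjoint vertex sets the forward and backward edge counts together are at most the number of pairs, i.e.\ $e(I,V(G)\sm I)+e(V(G)\sm I,I)\le |I|\,(n-|I|)$.

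Combining these inequalities yields $2|I|\,\delta^0(G)\le |I|\,(n-|I|)$, and since $I\ne\emptyset$ we may divide by $|I|$ to get $2\delta^0(G)\le n-|I|$, that is, $|I|\le n-2\delta^0(G)$, as claimed.

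There is essentially no obstacle here: the only point to be slightly careful about is that we are using that $G$ has at most one edge in each direction between a pair of vertices (so that a non-adjacent pair in $I$ still contributes at most its two ordered pairs to the bound on the complement side), which is exactly the hypothesis that $G$ is an oriented graph; the argument would otherwise need adjusting by a factor. The statement is trivial when $n-2\delta^0(G)\le 0$, in which case $G$ has no independent set of positive size at all.
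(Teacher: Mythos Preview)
Your argument is correct. The paper does not actually give a proof of this fact---it is stated with a \qed\ immediately after, indicating it is regarded as immediate.

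The intended one-line justification is presumably even simpler than your edge-counting: pick any vertex $x$ in an independent set $I$; since $G$ is oriented, $N^+(x)$ and $N^-(x)$ are disjoint, and since $I$ is independent both lie in $V(G)\setminus I$, giving $n-|I|\ge d^+(x)+d^-(x)\ge 2\delta^0(G)$. Your double-counting reaches the same conclusion and is perfectly valid, just slightly more machinery than needed. (Incidentally, the case $n-2\delta^0(G)\le 0$ cannot arise in an oriented graph, since $d^+(x)+d^-(x)\le n-1$ forces $\delta^0(G)\le (n-1)/2$.)
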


\cproof{Proof of Proposition~\ref{prop:3cycles}.}
{First we prove (i). By Fact \ref{fact:avg} there exists a vertex $x\in N^+(u)$ with
\[|N^+(x)\sm N^+(u)|\geq \delta^0(G)-|N^+(u)|/2+1/2.\]
Hence
\[ |N^+(u)|+|N^-(u)|+|N^+(x)\sm N^+(u)|\geq 5\delta^0(G)/2+1/2>n\]
and so~$x$ must have an outneighbour in $N^-(u)$.

For (ii), pick~$m\in\mathbb{N}$ and define an oriented
graph~$G$ on $n:=5m-1$ vertices as follows. Let $A$, $B$, $C$
be disjoint vertex sets of sizes $2m-1$, $2m-1$ and $m$
respectively. Add all possible edges from $A$ to $B$, $B$ to
$C$ and $C$ to $A$. Let $G[A]$ and $G[B]$ induce regular
tournaments. So for example every vertex in $A$ will have~$m-1$
outneighbours and~$m-1$ inneighbours in~$A$. (It is easy to see
that such oriented graphs exist.) Add a single vertex~$u$ with
$N^+(u):=B$ and $N^-(u):=A$. Then
$\delta^0(G)=2m-1=\floor{2n/5}$. By construction $u$ is not
contained in a 3-cycle. }

We now prove Theorem~\ref{thm:short_cycles} in a series of lemmas.
Lemmas \ref{lemma:4}, \ref{lemma:5} and \ref{lemma:6} deal with the special
cases $\ell=4,5,6$. Lemmas \ref{lemma:34or5} and \ref{lemma:7} deal with the general case $\ell\geq 7$.

\begin{lemma}\label{lemma:4}
If $G$ is an oriented graph on $n\geq 4$ vertices with
$\delta^0(G)\geq \lfloor n/3\rfloor+1$ then for any vertex
$x\in V(G)$,~$G$ contains a $4$-cycle through~$x$.%
    \COMMENT{Do not need the $n\geq 4$ condition here, but it fits nicely with $\geq 6$
in the 6-cycle case. }
\end{lemma}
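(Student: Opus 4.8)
The goal is to find a directed $4$-cycle $x a b c x$ through an arbitrary vertex $x$. The natural approach is to look at $A:=N^+(x)$ and $B:=N^-(x)$ and try to find an $A$-$B$ path of length two, i.e. an edge $ab$ with $a\in A$, $b$ having an inneighbour $b'\in B$ — actually we want $a\in A$, then some $b\in N^+(a)$, then some $c\in N^-(x)=B$ with $b\in N^-(c)$; so really we seek a path $a\to b\to c$ with $a\in A$ and $c\in B$. Equivalently, setting $A=N^+(x)$ and $B=N^-(x)$, I want vertices $a\in A$ and $c\in B$ with $N^+(a)\cap N^-(c)\neq\emptyset$.

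First I would apply Fact~\ref{fact:avg} to $A=N^+(x)$: there is a vertex $a\in A$ with $|N^+(a)\setminus A|\ge \delta^0(G)-|A|/2+1/2$. Set $D:=N^+(a)\setminus A$. Since $|A|,|B|\ge \lfloor n/3\rfloor+1$ and $|D|\gtrsim \delta^0(G)-|A|/2$, a counting argument should show that $D$ must meet $N^-(B)$, or more directly that some vertex of $D$ has an outneighbour in $B$. The cleanest route: every vertex $b\in D$ has $|N^+(b)|\ge \lfloor n/3\rfloor+1$, so if no vertex of $D$ sent an edge into $B$ we would need $N^+(b)\subseteq V(G)\setminus B$ for all $b\in D$; combined with the sizes this forces an overlap I can exploit. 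Actually the simplest finish is to pick, again by Fact~\ref{fact:avg} applied inside $D$ (or just by an averaging over $D$), a vertex $b\in D$ with few outneighbours inside $A\cup\{x\}\cup D$, so that $b$ has many outneighbours outside; then show $|N^+(b)\setminus(A\cup\{x\})| + |B| > n$, forcing $N^+(b)\cap B\neq\emptyset$, giving the $4$-cycle $x\to a\to b\to c\to x$ for the resulting $c\in B$. One must be slightly careful that $a\notin B$ and $b\notin\{x\}\cup B$, but these are handled by noting $A\cap B=\emptyset$ (no digon) and by choosing $b\in D=N^+(a)\setminus A$, with the membership $b\in B$ itself being harmless or excludable.

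The quantitative heart is just verifying the inequality $\delta^0(G)-|A|/2+1/2 + |B| - (\text{terms for }A\cup\{x\}) > n$ with $|A|,|B|\ge \lfloor n/3\rfloor+1$. Using $\delta^0(G)\ge n/3$ and $|A|\le n$ crudely is too weak, so the trick is that $a$ is chosen to have \emph{small} outdegree inside $A$, and similarly $b$ small outdegree inside the relevant set, so that the loss from the $|A|/2$ and analogous terms is compensated by the $+1$'s in $\lfloor n/3\rfloor+1$ and the fact that we have two neighbourhoods $N^+(b)$ and $B=N^-(x)$ each of size $>n/3$ together with the out-neighbourhood bound. I expect the main obstacle to be bookkeeping the small error terms (the $-1/2$, the $+1$, and ensuring the chosen vertices are distinct from $x$ and from each other) so that the bound $\delta^0(G)\ge\lfloor n/3\rfloor+1$ — rather than just $\ge n/3$ — is genuinely used and suffices; there is essentially no structural difficulty, only the need to be careful that the three inequalities from Fact~\ref{fact:avg} combine with slack rather than deficit.
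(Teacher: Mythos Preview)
There is a genuine gap at the step where you try to force $N^+(b)\cap B\neq\emptyset$. Applying Fact~\ref{fact:avg} to $D$ gives you a vertex $b\in D$ with at most $(|D|-1)/2$ outneighbours in~$D$, but this tells you nothing about $|N^+(b)\cap A|$: nothing prevents $b$ from sending essentially all of its out-edges back into~$A$. In the near-extremal picture where $A$, $B$ and $M:=V\setminus(A\cup B\cup\{x\})$ each have size about $n/3$, a vertex $b\in M$ can perfectly well have $N^+(b)\subseteq A\cup M$, so $N^+(b)\cap B=\emptyset$. Your target inequality ``$|N^+(b)\setminus(A\cup\{x\})|+|B|>n$'' is also off --- to force an intersection inside $V\setminus(A\cup\{x\})$ you only need the sum to exceed $n-|A|-1$ --- but even with the correct target you would need $|N^+(b)\cap A|$ to be bounded by a constant, and you have no mechanism for that. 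Two applications of Fact~\ref{fact:avg} plus the three ``$+1$'' terms cannot absorb a potential loss of order $n/3$.

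The paper's proof supplies precisely the missing idea via a case split. With $X\subseteq N^+(x)$ and $Y\subseteq N^-(x)$ each of size $\lfloor n/3\rfloor+1$, it asks whether (i) some $x'\in X$ has $|N^+(x')\setminus(X\cup Y)|\ge(\lfloor n/3\rfloor+1)/2$ and (ii) the symmetric statement holds in~$Y$. If both hold, these two sets lie in $V\setminus(X\cup Y)$, which has fewer than $\lfloor n/3\rfloor+1$ vertices, so they meet and the cycle closes. If (i) fails, then \emph{every} $x'\in X$ with $|N^+(x')\setminus X|\ge(\lfloor n/3\rfloor+1)/2$ is forced to have an outneighbour in~$Y$. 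One then manufactures such an $x'$ that also has an inneighbour $x''\in X$, by applying Fact~\ref{fact:avg} inside $X':=\{v\in X:d^-_X(v)>0\}$ (nonempty since $|X|$ exceeds the independence bound of Fact~\ref{fact:ind}), and $x\,x''\,x'\,y$ is the required $4$-cycle. The dichotomy ``either pigeonhole in the middle works, or its failure forces a direct $X\to Y$ edge from a vertex with an $X$-inneighbour'' is exactly the ingredient your outline is missing.
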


\begin{proof}
Assume that there is a vertex $x\in V(G)$ for which no such cycle exists. Let~$X$ be a
set of $\lfloor n/3\rfloor+1$ outneighbours of~$x$ and~$Y$ be a set of $\lfloor n/3\rfloor+1$ inneighbours.
Suppose that both of the following hold.
\begin{enumerate}
\item[(i)] There exists $x'\in X$ with $|N^+(x')\sm (X\cup Y)|\geq (\lfloor n/3\rfloor+1)/2$.
\item[(ii)] There exists $y'\in Y$ with $|N^-(y')\sm (X\cup Y)|\geq (\lfloor n/3\rfloor+1)/2$.
\end{enumerate}
Then
\[(N^+(x')\cap N^-(y'))\sm (X\cup Y)\neq\emptyset\]
and hence the desired 4-cycle exists. So without loss of generality assume that~(i) does not hold.
(The case when~(ii) does not hold is similar.)
Let $X'$ be the set of vertices $x'\in X$ with $d_X^-(x')>0$.
Note that Fact \ref{fact:ind} implies that $X'\neq\emptyset$.
Let $x'\in X'$ be such that $d_{X'}^+(x')$ is minimal.
Since $N^+(x')\cap (X\sm X')=\emptyset$, Fact~\ref{fact:avg} implies that
\[
|N^+(x')\sm X|= |N^+(x')\sm X'| \geq
	\delta^0(G)-|X'|/2 \geq \delta^0(G)-|X|/2 \geq (\lfloor n/3\rfloor+1)/2.
\]
Since we are assuming that~(i) does not hold this means that~$x'$ has an outneighbour
$y\in Y$. By definition of~$X'$ there exists an inneighbour~$x''\in X$ of~$x'$. But
then $xx''x'y$ is the required 4-cycle.
\end{proof}

\begin{lemma}\label{lemma:5}
If $G$ is an oriented graph on $n\geq 5$ vertices with
$\delta^0(G)\geq \lfloor n/3\rfloor+1$ then for any vertex
$x\in V(G)$, $G$ contains a $5$-cycle through~$x$.
\end{lemma}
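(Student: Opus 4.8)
The plan is to argue by contradiction: suppose some vertex $x$ of $G$ lies on no $5$-cycle. Set $X:=N^+(x)$ and $Y:=N^-(x)$; since $G$ is an oriented graph these sets are disjoint and $|X|,|Y|\ge\delta^0(G)\ge\floor{n/3}+1=:d$. A $5$-cycle through~$x$ is the same as three directed edges $ac$, $cd$, $db$ with $a\in X$, $b\in Y$ and $x,a,c,d,b$ pairwise distinct (it then closes up to $x\,a\,c\,d\,b\,x$), so it suffices to produce such a configuration.

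The first and main step is the observation that \emph{any} edge from $F:=N^+(X)$ to $\bar F:=N^-(Y)$ already yields a $5$-cycle through~$x$. First note $x\notin F\cup\bar F$, since otherwise $G$ would contain a $2$-cycle through~$x$. Now let $c\in F$, $d\in\bar F$ with $cd\in E(G)$, and pick $a\in X$ with $ac\in E(G)$ and $b\in Y$ with $db\in E(G)$. Then $x\,a\,c\,d\,b\,x$ is a closed walk of length~$5$, and its five vertices are distinct: $a,c,d,b\neq x$ (as $a\in X$, $c\in F$, $d\in\bar F$, $b\in Y$), $a\neq b$ (as $X\cap Y=\emptyset$), $a\neq c$, $c\neq d$, $d\neq b$ (no loops), and the only remaining coincidences $a=d$ and $c=b$ would each create a $2$-cycle ($a\to c\to a$, resp.\ $b\to d\to b$). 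Hence under our assumption $e(F,\bar F)=0$; in particular $F\cap\bar F$ is an independent set, so $|F\cap\bar F|\le n-2\delta^0(G)\le n-2d$ by Fact~\ref{fact:ind}.

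Next I would bound $|F\sm\bar F|$ and $|\bar F\sm F|$ from below. Since $|X|\ge d>n-2\delta^0(G)$, Fact~\ref{fact:ind} shows $X$ is not independent, so some $a\in X$ has an in-neighbour in~$X$; then $a\in F$, hence $N^+(a)\cap\bar F=\emptyset$ (as $e(F,\bar F)=0$), while $N^+(a)\subseteq N^+(X)=F$. Therefore $N^+(a)\subseteq F\sm\bar F$ and $|F\sm\bar F|\ge d^+(a)\ge d$; symmetrically $|\bar F\sm F|\ge d$. As $F\sm\bar F$, $\bar F\sm F$ and $\{x\}$ are pairwise disjoint this forces $2d+1\le n$, which together with $3d=3\floor{n/3}+3>n$ already gives a contradiction for small~$n$. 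For the remaining values of~$n$ one has to exploit that the preceding inequalities are now tight: using Fact~\ref{fact:avg} on $X$ and on~$Y$ one should be able to show that $X$, $Y$ and $V\sm(X\cup Y\cup\{x\})$ behave like the three vertex classes of a blow-up of a $3$-cycle, so closely that the strict bound $3d>n$ becomes incompatible with $\delta^0(G)\ge d$.

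The genuinely hard part is exactly this last, near-extremal step for~$n$ not small: there all of $X$, $Y$, $F$, $\bar F$ have order about $n/3$ and every estimate above is tight, so the contradiction must be wrung out of the single extra vertex encoded in $d=\floor{n/3}+1$. I expect this to need a short case analysis on the sizes of $d^+(x)$ and $d^-(x)$ --- with the case that $|X|$ or $|Y|$ is close to $n-d$ treated first (there $F$ or $\bar F$ is almost all of $V$, forcing the independent set $F\cap\bar F$ to be larger than Fact~\ref{fact:ind} allows) --- followed by a careful count of the edges between the three near-classes in the main case. By contrast, the usual nuisance of such arguments, verifying that the closed walk one produces is actually a cycle, is free here: the absence of $2$-cycles in an oriented graph rules out the only degenerate possibilities in the lift from an edge of $F$ to $\bar F$.
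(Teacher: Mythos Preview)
Your first observation---that any edge from $F:=N^+(X)$ to $\bar F:=N^-(Y)$ closes up to a genuine $5$-cycle, with the potential degeneracies ruled out by the absence of $2$-cycles---is correct and is the right kind of reduction. The bounds $|F\setminus\bar F|\ge d$ and $|\bar F\setminus F|\ge d$ are also correctly derived. The problem is that these bounds are far too weak: the inequality $2d+1\le n$ they yield is \emph{only} violated at $n=6$, so your argument actually settles nothing except that single value. Everything from ``For the remaining values of~$n$\dots'' onwards is a promissory note rather than a proof, and the mechanism you sketch---a case split on $|X|,|Y|$ followed by an edge count between three near-classes---is not concrete enough to assess, and I do not see how to make it work: your global sets $F,\bar F$ discard the information of which vertex of $X$ (resp.~$Y$) witnesses membership, and this is exactly what one needs to squeeze out the contradiction in the tight case.

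The paper's proof avoids this near-extremal analysis entirely by a device you are missing. Before doing any counting it first spends Fact~\ref{fact:ind} once more: since $N^-(x)$ is not independent, there is an edge $ya$ inside it, giving a path $y\to a\to x$ of length~$2$ together with the edge $yx$. Now a $5$-cycle through~$x$ follows from an $x$--$y$ path of length~$4$ \emph{or} an $x$--$y$ path of length~$3$ avoiding~$a$. This ``$3$ or~$4$'' flexibility is exactly what makes the counting close: with $X\subseteq N^+(x)$ and $Y\subseteq N^-(y)$ of size~$d$ and $Z:=X\cap Y$, one may assume $e(X,Y)=0$ (else a length-$3$ path exists), and then Fact~\ref{fact:avg} applied to $X\setminus Z$ and to $Y\setminus Z$ produces specific vertices $x'\in X$, $y'\in Y$ each with more than $(d+|Z|)/2$ out- resp.\ in-neighbours outside $X\cup Y$. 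Since $|V\setminus(X\cup Y)|=n-2d+|Z|$ and $3d>n$, these two neighbourhoods must meet, giving the length-$4$ path. The point is that by fixing $x'$ and $y'$ one compares two sets of size about $d/2$ inside a set of size about $n/3$, rather than (as in your approach) two sets each of size $\ge d$ inside all of $V$; the former comparison is tight enough to work, the latter is not.
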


\begin{proof}
As $N^-(x)$ is not independent by Fact~\ref{fact:ind} we can
pick vertices $a,y\in N^-(x)$ such that $ya,ax,yx\in E(G)$.
Let~$X$ be a set of $\lfloor n/3\rfloor+1$
outneighbours of~$x$ and~$Y$ be a set of $\lfloor n/3\rfloor+1$ inneighbours of~$y$.
Define $Z:=X\cap Y$. Clearly, it suffices to prove the next claim.

\begin{claim1} There exists at least one of the following:
\begin{enumerate}
	\item[(i)] an $x$-$y$ path of length $4$,
	\item[(ii)] an $x$-$y$ path of length $3$ avoiding $a$.
\end{enumerate}
\end{claim1}
\noindent
Note that $x,y,a\not\in X\cup Y$ since $G$ is an oriented graph.
So we may assume that $e(X,Y)=0$, as otherwise (ii) is satisfied.
In particular,~$Z$ is independent and $e(X,Z)=e(Z,Y)=0$. The following claim immediately
implies~(i) (to see this, note that $x,y \notin N^+(x')\cap N^-(y')$).

\begin{claim2} Both of the following hold.
\begin{enumerate}
\item[(a)] There exists $x'\in X$ with $|N^+(x')\sm (X\cup Y)|\geq (\lfloor n/3\rfloor+1+|Z|)/2$.
\item[(b)] There exists $y'\in Y$ with $|N^-(y')\sm (X\cup Y)|\geq (\lfloor n/3\rfloor+1+|Z|)/2$.
\end{enumerate}
\end{claim2}
\noindent
We will only prove~(a) (the argument for~(b) is similar).
If $X\sm Z=\emptyset$ then $X=Z$ and so~$X$ is independent. But $|X|=\lfloor n/3\rfloor+1$ which
contradicts Fact~\ref{fact:ind}. So assume that $X\setminus Z\neq\emptyset$
and let $x'\in X\setminus Z$ be such that $d_{X\sm Z}^+(x')$ is minimal.
Fact~\ref{fact:avg} implies that
\[
d_{\bar{X\sm Z}}^+(x')>\delta^0(G)-(|X|-|Z|)/2\geq(\lfloor n/3\rfloor+1+|Z|)/2.
\]
By assumption $x'$ has no outneighbours in~$Y$,
so $d_{\bar{X\sm Z}}^+(x')=d_{\bar{X\cup Y}}^+(x')$ and thus~(a) holds.
\end{proof}

In order to prove the cases $\ell=6$ and $\ell \ge 7$
of Theorem~\ref{thm:short_cycles} we need some
more notation. An \emph{$xy$-butterfly}
is an oriented graph with vertices $x,y,z,a,b$ such that $xa$,
$xz$, $az$, $zb$, $zy$, $by$ are all the edges (Figure~\ref{fig:butterfly}).
\begin{figure}
\centering
\includegraphics[width=4cm]{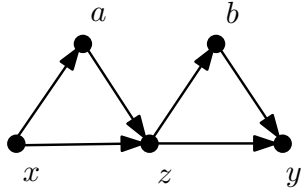}
\caption{An $xy$-butterfly}
\label{fig:butterfly}
\end{figure}
The crucial fact about a butterfly is that it contains
$x$-$y$ paths of lengths 2, 3 and 4, and is thus a useful tool in finding
cycles of prescribed length: any $y$-$x$ path of length $\ell-2$, $\ell-3$
or $\ell-4$ whose interior avoids the $xy$-butterfly yields an $\ell$-cycle containing~$x$.
The following fact tells us that a large minimum semidegree
guarantees the existence of a butterfly.

\begin{fact}\label{fact:butterfly}
If $G$ is an oriented graph on $n$ vertices
with $\delta^0(G)\geq \lfloor n/3\rfloor +1$ then
for any vertex $x\in V(G)$ there exists a vertex~$y$ such that $G$ contains
an $xy$-butterfly.
\end{fact}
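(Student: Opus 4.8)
The plan is to find, for a given vertex $x$, a vertex $z \in N^+(x)$ together with two distinct vertices $a,b$ with $x \to a \to z$ and $z \to b \to y$ for some common $y$; this structure is exactly an $xy$-butterfly (after checking the five listed vertices are distinct and no unwanted edges exist — but note the definition of butterfly in the paper only requires the listed six edges to be present, so we really just need the six edges and five distinct vertices). Concretely, first I would use Fact~\ref{fact:ind}: since $N^+(x)$ has size at least $\lfloor n/3\rfloor+1 > n - 2\delta^0(G)$, it cannot be independent, so there is an edge inside $N^+(x)$, say $a \to z$ with $a,z \in N^+(x)$. This already gives the ``left wing'': $x\to a$, $x\to z$, $a\to z$.

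Next I would produce the ``right wing'' from $z$. The natural approach is to look at $N^+(z)$: it has size at least $\lfloor n/3\rfloor + 1$, and again by Fact~\ref{fact:ind} it is not independent, so there is an edge $b \to y$ with $b,y \in N^+(z)$, giving $z\to b$, $z\to y$, $b\to y$. Combining with the left wing we have all six required edges, provided the vertex set $\{x,a,b,y,z\}$ has size $5$. Distinctness of $x$ from $a,z,b,y$ is automatic since $G$ is an oriented graph (no vertex is its own out/in-neighbour, and e.g.\ $x=b$ would give edges $x\to z$ and $z\to x$); similarly $z\notin\{a,b,y\}$ and $a\neq b$, $a\ne y$ follow from $a,z\in N^+(x)$ versus $b,y\in N^+(z)$ together with the oriented-graph condition — the only genuinely problematic coincidence is $a=y$ or, more subtly, when the only edge in $N^+(z)$ uses a vertex already committed.

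The main obstacle is therefore this overlap issue: $N^+(z)$ and $N^+(x)\cup\{a\}$ (or even $\{x,a\}$) could intersect, and the single edge we extract from $N^+(z)$ might reuse $a$ or $x$. To handle this cleanly I would instead apply Fact~\ref{fact:ind} to a large subset of $N^+(z)$ with the few bad vertices removed: since $|N^+(z)| \ge \lfloor n/3\rfloor+1$ and we only need to delete at most two vertices ($x$ and $a$; in fact $x\notin N^+(z)$ anyway), the set $N^+(z)\setminus\{a\}$ still has size at least $\lfloor n/3\rfloor \ge n - 2\delta^0(G) + 1 > n - 2\delta^0(G)$ for $n$ large, hence is still not independent, yielding an edge $b\to y$ with $b,y\in N^+(z)\setminus\{a\}$. (One should double-check the arithmetic $\lfloor n/3\rfloor \ge n-2\delta^0(G)$: with $\delta^0(G)\ge \lfloor n/3\rfloor + 1$ we get $n - 2\delta^0(G) \le n - 2\lfloor n/3\rfloor - 2 \le n/3 - 1 < \lfloor n/3\rfloor$, so indeed any subset of $N^+(z)$ of size $\ge \lfloor n/3\rfloor$ is non-independent.) This removes all overlap worries and completes the construction of the $xy$-butterfly.
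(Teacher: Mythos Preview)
Your approach is exactly the paper's: pick an edge $az$ inside $N^+(x)$ and then an edge $by$ inside $N^+(z)$, both via Fact~\ref{fact:ind}. The distinctness check is simpler than you made it: since $az\in E(G)$ we have $a\in N^-(z)$, and in an oriented graph $N^-(z)\cap N^+(z)=\emptyset$, so automatically $a\notin N^+(z)\supseteq\{b,y\}$ (and likewise $x\notin\{b,y\}$); the paper records this in one line as ``as $x,a\in N^-(z)$ all the vertices are distinct.'' Your detour through $N^+(z)\setminus\{a\}$ is therefore vacuous --- which is just as well, since your inequality $n-2\lfloor n/3\rfloor-2\le n/3-1$ actually fails when $n\equiv 2\pmod 3$.
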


\begin{proof}
By Fact~\ref{fact:ind} the outneighbourhood of~$x$ is not independent, so pick
an edge $az$ in it. Reapply Fact~\ref{fact:ind} to find an edge
$by$ in the outneighbourhood of $z$.
Note that as $x,a\in N^-(z)$ all the vertices are distinct.
\end{proof}

\begin{lemma}\label{lemma:6}
If~$G$ is an oriented graph on~$n\geq 6$ vertices with
$\delta^0(G)\geq \lfloor n/3\rfloor+1$ then for any vertex
$x\in V(G)$, $G$ contains a $6$-cycle through~$x$.
\end{lemma}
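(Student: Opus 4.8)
The plan is to use the butterfly machinery from Fact~\ref{fact:butterfly}, in a way that parallels how Lemmas~\ref{lemma:4} and~\ref{lemma:5} were proved for the $4$- and $5$-cycle cases. First I would invoke Fact~\ref{fact:butterfly} to obtain a vertex~$y$ and an $xy$-butterfly on vertices $x,y,z,a,b$. Recall this butterfly supplies $x$-$y$ paths of lengths $2$, $3$ and $4$; since we want a $6$-cycle through~$x$, it suffices to find a $y$-$x$ path of length $2$, $3$ or $4$ whose interior vertices avoid $\{x,y,z,a,b\}$. So the whole problem reduces to showing that $y$ can reach $x$ along a short path in $G - \{z,a,b\}$.

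Next I would try to produce a $y$-$x$ path of length $2$ directly: pick a set $Y$ of $\lfloor n/3\rfloor+1$ outneighbours of~$y$ and a set $X$ of $\lfloor n/3\rfloor+1$ inneighbours of~$x$, and note that if some vertex of $Y\setminus\{z,a,b\}$ lies in $X$ we are done (a common neighbour gives the length-$2$ $y$-$x$ path, hence a $6$-cycle). If no such common vertex exists, then $Y$ and $X$ are essentially disjoint (up to the three butterfly vertices), so $|X\cup Y|$ is close to $2(\lfloor n/3\rfloor+1)$. This is exactly the regime where the counting arguments of Lemmas~\ref{lemma:4} and~\ref{lemma:5} kick in: using Fact~\ref{fact:avg} I would find $x'\in X$ with many outneighbours outside $X\cup Y$, and $y'\in Y$ with many inneighbours outside $X\cup Y$, forcing $N^+(y')\cap N^-(x')$ to be nonempty outside $X\cup Y$ (and avoiding the finitely many butterfly vertices, which costs only an additive constant absorbed by the slack in $\delta^0 \ge \lfloor n/3\rfloor+1$ versus $n/3$). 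That common vertex gives a $y$-$x$ path of length $4$ with clean interior, hence the desired $6$-cycle. As in Lemma~\ref{lemma:5}, the subtlety is that $X$ or $Y$ might be (almost) independent, in which case Fact~\ref{fact:avg} does not directly apply to all of $X$; I would handle this the same way, restricting to the subset $X'\subseteq X$ of vertices with an inneighbour in $X$ (nonempty by Fact~\ref{fact:ind}) and choosing $x'\in X'$ minimizing $d^+_{X'}(x')$, and symmetrically for $Y$.

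The main obstacle I anticipate is bookkeeping the butterfly vertices $z,a,b$ together with $x,y$: when I assert the existence of a common outneighbour/inneighbour outside $X\cup Y$, I must also ensure it is not one of $z,a,b$, and I must make sure the path I build through the butterfly is genuinely a $6$-cycle rather than a shorter closed walk — i.e.\ the internal vertex of the chosen butterfly path and the internal vertices of the $y$-$x$ path are all distinct. Since the butterfly has only five vertices and the counting arguments have $\Omega(1)$ slack (indeed they have slack linear in $n$ in the disjoint case, and at least a constant always), excluding a bounded set of vertices changes none of the inequalities in an essential way. A second minor point: one should double-check that $x, y \notin X \cup Y$, which holds because $G$ is an oriented graph and $x\in N^+(x')$, $y \in N^-(y')$ would create digons; this is the same observation used in Lemma~\ref{lemma:5}. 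Modulo these finite-set adjustments, the proof is a direct adaptation of the length-$4$ and length-$5$ arguments, and I would write it in that style, with a \textbf{Claim} isolating ``there is a $y$-$x$ path of length $2$, $3$, or $4$ in $G-\{z,a,b\}$'' and then verifying it by the two-case split above.
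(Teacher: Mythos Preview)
Your plan is essentially the paper's: obtain an $xy$-butterfly, reduce to finding a short $y$-$x$ path whose interior avoids the relevant butterfly vertices, and force such a path by a counting argument based on Fact~\ref{fact:avg}. The paper executes the count slightly differently --- instead of fixing single vertices $y'\in Y$, $x'\in X$, it works with the full second neighbourhoods $Y':=N^+(Y)\setminus Y$ and $X':=N^-(X)\setminus X$ and shows $|X'\cap Y'|\ge 3$ via inclusion--exclusion, with the three cases organized as assumptions (i) no length-$2$ path, (ii) no length-$3$ path avoiding $a$, (iii) no length-$4$ path avoiding $z$ --- but the underlying numerics are the same and the Lemma~\ref{lemma:4}-style $X'$ trick is not actually needed here.

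Two small corrections to your sketch: you have the directions reversed (for a $y$-$x$ path of length $4$ you need $y'\in Y$ with many \emph{out}neighbours and $x'\in X$ with many \emph{in}neighbours, so that $N^+(y')\cap N^-(x')$ is large, matching what you wrote as the target set); and the slack in the final inequality is only a small constant ($|X'\cap Y'|\ge 3$ in the paper's version), not linear in $n$, so the exclusion of $z$ genuinely uses that slack and cannot be waved away.
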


\begin{proof}
Fact~\ref{fact:butterfly} gives us an $xy$-butterfly for some vertex $y\in V(G)$,
with vertices $a,b,z$ as described in the definition of an $xy$-butterfly.
To complete the proof we may assume that each of the following holds.
\begin{itemize}
\item[(i)] There is no $y$-$x$ path of length 2.
\item[(ii)] There is no $y$-$x$ path of length 3 avoiding $a$.
\item[(iii)] There is no $y$-$x$ path of length 4 avoiding $z$.
\end{itemize}
Indeed, it is easy to check that if one of these does not hold then this $y$-$x$ path together
with a suitable subpath of the $xy$-butterfly forms the required cycle.%
    \COMMENT{Note that in case (i) we do not
need to exclude explicitly $a$ or $b$ or $z$ as the path having length 2 guarantees it automatically.
Similarly, we don't have to exclude~$z$ in case~(ii). }

Pick $Y\subseteq N^+(y)\sm \{a,x\}$, $X\subseteq N^-(x)\sm \{y\}$ such that%
     \COMMENT{As $n\geq6$ we have $Y\neq\emptyset$.}
$|Y|=\lfloor n/3\rfloor-1$ and $|X|=\lfloor n/3\rfloor$.
Observe that $b,z\not\in Y$ and $a,z\not\in X$. Moreover $X\cap
Y=\emptyset$ by~(i). Let $Y':=N^+(Y)\sm Y$, $X':=N^-(X)\sm X$.
Then $X\cap Y'=\emptyset$ and $Y\cap X'=\emptyset$ by~(ii).
Fact~\ref{fact:avg} implies that $|Y'| \geq \lfloor
n/3\rfloor/2+2$ and $|X'|\geq \lfloor n/3\rfloor/2+3/2$. By (i)
and the definitions of~$X$ and~$Y$ we have $x,y\not\in
X,Y,X',Y'$. Altogether this shows that
\begin{equation*}
n+|X'\cap Y'|\geq |X|+|Y|+|X'|+|Y'|+2 \geq 3\lfloor n/3\rfloor+9/2\geq (n-2)+9/2.
\end{equation*}
Hence $|X'\cap Y'|\geq 3$, and so $(X'\cap Y')\sm\{z\}\neq \emptyset$. But this implies that
there is a $y$-$x$ path of length~4 avoiding~$z$.
\end{proof}

The next two lemmas deal with the case $\ell\geq 7$.

\begin{lemma}\label{lemma:34or5}
Let $C$ be some positive integer. If $G$ is an oriented graph
on $n\geq 8 \cdot 10^{9} C$ vertices with $\delta^0(G)\geq
n/3-C+1$ then for every pair $x\neq y$ of vertices there exists
an $x$-$y$ path of length $3$, $4$ or~$5$.
\end{lemma}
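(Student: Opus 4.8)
The plan is to find, for the ordered pair $(x,y)$, a path of length $3$, $4$ or $5$ by a two-step neighbourhood expansion from~$x$ together with a one-step contraction into~$y$. First I would set $A_1 := N^+(x)$ and let $A_2 := N^+(A_1) \sm (A_1 \cup \{x\})$ be the set of vertices reachable from~$x$ by a path of length exactly~$2$; similarly put $B_1 := N^-(y)$ and $B_2 := N^-(B_1)\sm (B_1\cup\{y\})$. A path of length~$3$ from~$x$ to~$y$ exists unless $e(A_1, B_1) = 0$ except possibly for edges forced to be absent for trivial reasons; more usefully, an $x$-$y$ path of length~$4$ exists whenever $A_2 \cap B_1 \neq \emptyset$ (via $x \to A_1 \to A_2 \ni z \in B_1 \to y$, after checking the two internal vertices can be chosen distinct from~$y$ and~$x$), and a path of length~$5$ exists whenever $A_2 \cap B_2 \neq \emptyset$. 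So it suffices to show that if none of these intersections is non-empty then we reach a contradiction with the minimum semidegree hypothesis.

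The key estimates come from Fact~\ref{fact:avg}. Since $\delta^0(G) \ge n/3 - C + 1$, we have $|A_1| \ge n/3 - C + 1$, and applying Fact~\ref{fact:avg} to $A_1$ (choosing $x' \in A_1$ with $d^+_{A_1}(x')$ small) gives $|A_2| = |N^+(A_1) \sm A_1| \ge \delta^0(G) - |A_1|/2 + 1/2$; but this lower bound is too weak on its own. The right move is to iterate: if $|A_2|$ were small, then almost all of $N^+(A_1)$ lies inside $A_1$, forcing $e(A_1)$ to be close to $|A_1|^2/2$, which contradicts Fact~\ref{fact:avg}'s bound $e(A_1) \le |A_1|(|A_1|-1)/2$ only by a constant — so instead one argues that $A_1 \cup A_2$ is large: more precisely $|A_1 \cup A_2| \ge 2\delta^0(G) - $ (small error), because a vertex of $A_1$ of minimum outdegree within $A_1 \cup A_2$ sends nearly all its outneighbours into $A_1 \cup A_2$. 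Carrying this through, and symmetrically for $B_1 \cup B_2$, one gets $|A_1 \cup A_2|, |B_1 \cup B_2| \ge 2n/3 - O(C)$, whence $|(A_1 \cup A_2) \cap (B_1 \cup B_2)| \ge n/3 - O(C) > 0$ using $n \ge 8 \cdot 10^9 C$. Then a common vertex in the intersection, together with a short case analysis of which of $A_1, A_2$ and which of $B_1, B_2$ it lies in, produces a path of length in $\{2,3,4,5\}$; the length-$2$ case (common vertex in $A_1 \cap B_1$) must be upgraded to length~$4$ by rerouting through~$A_2$, which is possible since $A_2$ is large.

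The main obstacle I anticipate is bookkeeping the "trivial" obstructions: because $G$ is an oriented graph, certain edges between the neighbourhood sets are automatically absent (e.g. there is no edge from $A_1$ back to~$x$, and $x,y$ themselves must be excluded from the internal vertices of any path we build), and when $x$ and~$y$ are adjacent or share neighbours the sets $A_i$ and $B_j$ overlap in ways that need care. The cleanest route is probably to prove first that either we are done or $x \notin A_1 \cup A_2$, $y \notin B_1 \cup B_2$, $A_1 \cap B_1 = \emptyset$, $A_2 \cap B_1 = \emptyset$ and $A_1 \cap B_2 = \emptyset$ (each failure directly yielding one of the desired paths after a one-line check), and only then run the counting argument on the remaining configuration $A_2 \cap B_2 \neq \emptyset$. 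The constant $8 \cdot 10^9$ is wildly generous for this; any $n \ge 100C$ or so would suffice, so the arithmetic at the end is not delicate once the set-up is right.
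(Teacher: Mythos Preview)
Your central claim, that $|A_1 \cup A_2| \ge 2\delta^0(G) - O(C)$, does not follow from the argument you sketch and is in fact false. Fact~\ref{fact:avg} applied to $A_1$ gives a vertex $x' \in A_1$ with $|N^+(x') \sm A_1| \ge \delta^0(G) - |A_1|/2 + 1/2$, whence (since $A_1$ and $A_2$ are disjoint and $N^+(A_1) \subseteq A_1 \cup A_2$)
\[
|A_1 \cup A_2| \;\ge\; |A_1| + \bigl(\delta^0(G) - |A_1|/2\bigr) \;=\; |A_1|/2 + \delta^0(G) \;\ge\; \tfrac{3}{2}\,\delta^0(G),
\]
but no more: if $|A_1| \approx \delta^0(G)$ and $G[A_1]$ is a near-regular tournament, each vertex of $A_1$ has about $|A_1|/2$ outneighbours inside $A_1$ and only about $\delta^0(G)/2$ outside, and these can all land in a set $A_2$ of size roughly $\delta^0(G)/2$. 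So $|A_1 \cup A_2|$ can be as small as about $n/2$, and symmetrically for $|B_1 \cup B_2|$; then $|A_1 \cup A_2| + |B_1 \cup B_2| \approx n$ and the two sets need not intersect. Your stated justification (``a vertex of $A_1$ of minimum outdegree within $A_1 \cup A_2$ sends nearly all its outneighbours into $A_1 \cup A_2$'') holds for \emph{every} vertex of $A_1$, by the very definition of $A_2$, so it cannot yield any lower bound on $|A_1 \cup A_2|$.

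The paper does eventually prove that the second neighbourhood has size close to $n/3$, but only under the hypotheses that no path of length $3$, $4$ or $5$ exists, and via a considerably more delicate argument. With $X \subseteq N^+(x)$, $Y \subseteq N^-(y)$, $Z := X \cap Y$, $X' := N^+(X \sm Z) \sm X$ and $Y' := N^-(Y \sm Z) \sm Y$, the absence of a length-$3$ path gives $e(X,Y)=0$, and the absence of a length-$4$ path gives $X' \cap Y' = \emptyset$, so $X,Y,X',Y'$ nearly partition $V(G)$. One then shows by a counting argument that almost every vertex of $X \sm Z$ is \emph{good} (has nearly $|X'|$ outneighbours in $X'$), hence almost every vertex of $X'$ is \emph{nice} (has nearly $|X\sm Z|$ inneighbours in $X \sm Z$). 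A nice $x' \in X'$ therefore has almost no outneighbours in $X \sm Z$ (since $G$ is oriented), and essentially none in $Y \cup Y'$ (else a path of length $4$ or $5$ arises), so nearly all of its $\delta^0(G)$ outneighbours lie in $X'$ itself. This simultaneously forces $|X'| \ge n/3 - o(n)$ and $e(X') > |X'|^2/2$, a contradiction. The constant $8 \cdot 10^9$ is genuinely used to control the $\epsilon$-error terms in this good/nice argument (via $\epsilon = 10^{-4}$ and $C' = 10C/\epsilon$), so your remark that $n \ge 100C$ would already suffice is also mistaken.
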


\begin{proof}
Let $\epsilon := 1/10^4$ and $C':=10C/\eps$. Let~$X$ be a set of~$\lceil{n/3}\rceil-2C$
outneighbours of~$x$ in~$G-y$ and let~$Y$ be a set of~$\ceil{n/3}-2C$ inneighbours of~$y$ in~$G-x$,
chosen so that%
     \COMMENT{loose the $+1$ from the minimum degree since we might have that
$N^+(x)\sm\{y\}=N^-(y)\sm \{x\}$ and
$|N^+(x)|=|N^-(y)|=\delta^0(G)$ as well as $y\in N^+(x)$ and
$x\in N^-(y)$.} $|X\sm Y|$, $|Y\sm X|\geq C$. Let $Z:=X\cap Y$.
If there is an~$X$-$Y$ edge then we have an $x$-$y$ path of
length 3. So suppose there is no such edge. In particular this
implies that~$Z$ is independent and there are no~$X$-$Z$
or~$Z$-$Y$ edges.

Let $X':=N^+(X\sm Z)\sm X$ and $Y':=N^-(Y\sm Z)\sm Y$. Note
that $X'\cap Y=\emptyset$ and $Y'\cap X=\emptyset$, as
otherwise we have an $X$-$Y$ edge. Moreover, we may assume that
$X'\cap Y'=\emptyset$, as otherwise we have an $x$-$y$ path of
length~4. As no vertex in $X\sm Z$ has an outneighbour in~$Z$
we have $X'=N^+(X\sm Z)\sm (X\sm Z)$. Hence by
Fact~\ref{fact:avg}
\[
|X'| \geq \delta^0(G) - |X\sm Z|/2\geq \ceil{n/3}/2+|Z|/2.
\]
Similarly, $|Y'|\geq \ceil{n/3}/2+|Z|/2$. Observe that this implies
\begin{equation}\label{eq:XYX'Y'}
|V(G)\sm (X\cup X'\cup Y\cup Y')|\leq 4C.
\end{equation}
Note that
\begin{equation} \label{lowerX'}
|X'| \le n- |X \cup Y| - |Y'| \le n- (2n/3-|Z|-4C)-(n/6+|Z|/2) = n/6 +|Z|/2+4C.
\end{equation}
We call a vertex $x'\in X\sm Z$ \emph{good}
if $|N^+(x')\sm X|\geq n/6+|Z|/2-C'\geq |X'|-4C'/3$ (the last
inequality follows from~(\ref{lowerX'})).%
     \COMMENT{Also need $4C\le C'/3$.}
Suppose that at least
$\epsilon |X\sm Z|$ vertices in $X\sm Z$ are not good. Since $d^+_{X\sm Z}(x')\ge
\delta^0(G)-|N^+(x')\sm (X\sm Z)|=\delta^0(G)-|N^+(x')\sm X|$ for every $x'\in X\sm Z$
this implies that
\begin{align*}
e(X\sm Z)&\ge\epsilon |X\sm Z|(\delta^0(G)-(n/6+|Z|/2-C'))+(1-\epsilon)|X\sm Z|(\delta^0(G)-|X'|)\\
& \stackrel{(\ref{lowerX'})}{\geq}\epsilon |X\sm Z|(n/6-|Z|/2+C'/2)+ (1-\epsilon)|X\sm Z|(n/6-|Z|/2-5C)\\
& = |X\sm Z|(n/6-|Z|/2+\eps C'/2 -5C (1-\epsilon)) \\
&\ge |X\sm Z|(n/6-|Z|/2) \ge |X\sm Z|^2/2.
\end{align*}
But this is a contradiction as $G$ is an oriented graph. Thus we may assume that all but at most
$\epsilon |X\sm Z|$ vertices in $X\sm Z$ are good, and hence, since $|X'|\ge n/6\ge 4C'/(3\eps)$
we have%
     \COMMENT{We need $n\geq 8C'/\epsilon=8\cdot 10\cdot 10^8 C$ here.}
\begin{align}\label{eq:XXZbound}
e(X\sm Z,X') \geq (1-\epsilon)|X\sm Z|(|X'|-4C'/3)
&\geq (1-2\epsilon)|X\sm Z||X'|.
\end{align}
Call a vertex $x'\in X'$ \emph{nice} if $|N^-(x')\cap (X\sm Z)|\geq(1-2\sqrt{\epsilon})|X\sm Z|$. Then at least
$(1-2\sqrt{\epsilon})|X'|$ vertices in $X'$ are nice, as otherwise
\begin{align*}
e(X\sm Z,X')\leq 2\sqrt{\epsilon}|X'|(1-2\sqrt{\epsilon})|X\sm Z|+(1-2\sqrt{\epsilon})|X'||X\sm Z|
<(1-2\epsilon)|X'||X\sm Z|,
\end{align*}
which contradicts \eqref{eq:XXZbound}.
Consider a nice vertex%
     \COMMENT{exists since $(1-2\sqrt{\eps})|X'|\ge (1-1/50)n/6\ge 2$}
$x'\in X'\setminus\{y\}$. Note that $N^+(x')\cap (Y\cup Y')$ is either empty or equal
to~$\{x\}$ (as otherwise we get an $x$-$y$ path of length~4 or~5).
Since~$x'$ is nice it has at most $2\sqrt{\eps}|X\sm Z|$ outneighbours in~$X\sm Z$ and so%
     \COMMENT{Get $n/3-C+1-2\sqrt{\epsilon} n/3-1-4C\ge n/3-\sqrt{\epsilon}n$.
Ok if $5C<\sqrt{\epsilon}n/3$.}
\begin{equation}\label{eq:N+x'capX'}
|N^+(x')\cap X'|\stackrel{\eqref{eq:XYX'Y'}}{\geq}
	\delta^0(G)-2\sqrt{\epsilon} |X\sm Z| - 1-4C\geq n/3-\sqrt{\epsilon} n.
\end{equation}
In particular, $|X'|\geq n/3-\sqrt{\epsilon} n$. Similarly, $|Y'|\geq n/3-\sqrt{\epsilon} n$.
But $|X\cup Y|\geq n/3-C$ and so%
    \COMMENT{Requires $C\leq \sqrt{\epsilon} n$.}
\begin{equation}\label{upperX'}
|X'|\leq n-|X\cup Y|-|Y'|\leq n/3+2\sqrt{\epsilon}n.
\end{equation}
Now we combine this with the fact that at least%
     \COMMENT{ok if $\sqrt{\eps}|X'|\ge 1$, ie $\frac{1}{100}\frac{n}{6}\ge 1$.}
$|X'|-1-2\sqrt{\eps}|X|'\ge (1-3\sqrt{\eps})|X'|$ vertices in $X'\setminus \{y\}$ are nice to obtain
\[ |X'|^2/2\ge e(X') \stackrel{\eqref{eq:N+x'capX'}}{\ge} (1-3\sqrt{\epsilon} )|X'|(n/3-\sqrt{\epsilon} n)
\stackrel{(\ref{upperX'})}{\ge} (1-3\sqrt{\epsilon})|X'|(|X'|-3\sqrt{\epsilon}  n)>2|X'|^2/3.\]
This contradiction completes the proof.%
\COMMENT{Final contradiction as $(1-3\sqrt{\epsilon} )(1-3\sqrt{\epsilon} n/|X'|)\ge
(1-3\sqrt{\epsilon} )(1-9\sqrt{\epsilon}/(1-3\sqrt{\epsilon}))
= (1-3\sqrt{\epsilon} )(1-9\cdot (100/97)\sqrt{\epsilon} ) \ge 1-(3+10)\sqrt{\epsilon} =
87/100$.}
\end{proof}

\begin{lemma}\label{lemma:7}
Suppose~$\ell\geq 7$ and $n\geq 10^{10} \ell$. If~$G$ is an
oriented graph on~$n$ vertices with $\delta^0(G)\geq \lfloor n/3\rfloor +1$ then
for every vertex~$x\in V(G)$, $G$ contains an $\ell$-cycle through~$x$.
\end{lemma}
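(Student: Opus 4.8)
The plan is to reduce the problem of finding an $\ell$-cycle through $x$ to two ingredients already in place: the butterfly (Fact~\ref{fact:butterfly}), which supplies $x$-$y$ paths of three consecutive lengths $2,3,4$ emanating from $x$ and reaching some fixed vertex $y$, and Lemma~\ref{lemma:34or5}, which supplies a short $y$-$x$ return path of length $3$, $4$ or $5$. Naively combining these gives cycles of length $5,6,7,8,9$ through $x$ only. To reach a general $\ell\ge 7$ I would insert a long, flexible ``middle'' segment: first build a butterfly at $x$, giving $x$-$w$ paths of lengths $2,3,4$ for some $w$; then travel from $w$ along a path whose length can be prescribed to be any value in a long interval $[m, M]$ with $M - m$ large (at least, say, $5$), landing on some vertex $y$; and finally close up using a $y$-$x$ path of length $3$, $4$ or $5$ from Lemma~\ref{lemma:34or5}. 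Adding the three degrees of freedom ($2$/$3$/$4$ at the start, an interval in the middle, $3$/$4$/$5$ at the end), one obtains a set of attainable cycle lengths that is an interval once the middle interval has length $\ge 1$, so every $\ell$ in a suitable range $[c_0, n - c_1]$ is hit; since $n \ge 10^{10}\ell$, the target $\ell$ comfortably lies in this range.

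The key steps, in order, are as follows. \textbf{Step 1:} apply Fact~\ref{fact:butterfly} at $x$ to get a vertex $w$ and an $xw$-butterfly $B$, and set aside its (constantly many) internal vertices. \textbf{Step 2:} show that between two prescribed vertices $u,v$ (here $u=w$), after deleting a bounded set $S$ of ``forbidden'' vertices, one can find a $u$-$v$ path in $G - S$ of \emph{every} length in an interval $[r, \Theta(n)]$ for some absolute constant $r$. This is the technical heart: I would build it greedily/absorbingly, using that $\delta^0(G-S) \ge n/3 - O(1)$ so that from any vertex one can always step to a new out-neighbour (indegrees and outdegrees into the used set are bounded since $G$ is oriented and we have used only $o(n)$ vertices), combined with a short ``connector'' at the end — e.g. run Lemma~\ref{lemma:34or5} inside $G$ minus the already-used vertices to finish off. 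More precisely, to get an $\ell$-cycle through $x$: pick one of the butterfly paths $P_1$ from $x$ to $w$ of length $i \in \{2,3,4\}$; extend by a path from $w$ of length $j$ for suitable $j$ avoiding all used vertices; then close with a path of length $3,4$ or $5$ back to $x$ avoiding used vertices, choosing $i$, $j$, and the closing length so they sum to $\ell$. \textbf{Step 3:} verify the bookkeeping: the number of vertices ever forbidden is $O(\ell)$ (butterfly vertices plus the $o(n)$ used so far), which is $\ll n$, so Fact~\ref{fact:avg}-type density arguments and Lemma~\ref{lemma:34or5} (with its constant $C = O(\ell)$, valid since $n \ge 8\cdot 10^9 C$) apply throughout.

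\textbf{Main obstacle.} The genuinely delicate point is Step~2: producing $u$-$v$ paths of \emph{precisely} the prescribed length for every value in a long interval, rather than merely a path of length $\le$ something, while staying inside $G$ minus a growing set of used vertices and keeping control of the endpoint. A pure greedy walk gives length flexibility but may fail to return to $v$; Lemma~\ref{lemma:34or5} returns to $v$ but only adjusts length by $\pm 1$ within $\{3,4,5\}$. The resolution I expect to use is to do the bulk of the length-padding with a greedy/DFS-type long path in the dense remainder (where $\delta^0 \ge n/3 - O(\ell)$ forces long paths between almost all pairs, in fact paths of all lengths up to $\Theta(n)$ by a standard rotation/extension argument), and then splice on one application of Lemma~\ref{lemma:34or5} as an adjustable ``tail'' to hit the exact residue. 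One must be careful that the three length-intervals overlap so their sumset is a full interval of length $\ge$ the butterfly span $+$ the Lemma~\ref{lemma:34or5} span, which the $2,3,4$ / $3,4,5$ choice guarantees. A secondary nuisance is the small cases $\ell = 7,8,9$, where the middle interval may be empty; these follow directly by combining a butterfly path of length $2,3,4$ with a Lemma~\ref{lemma:34or5} path of length $5,4,3$ respectively (using that $n$ is large so the two paths can be made internally disjoint).
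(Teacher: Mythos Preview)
Your overall structure---butterfly at $x$, a middle path, then Lemma~\ref{lemma:34or5} to close up---is exactly the paper's proof. But you have misidentified the main obstacle and made Step~2 far harder than it needs to be. You do \emph{not} need a path of prescribed length between two \emph{prescribed} endpoints, nor an interval of attainable middle lengths: a single greedy path of the \emph{fixed} length $\ell-7$ starting at the butterfly endpoint~$y$ and ending at \emph{whatever} vertex~$v$ it happens to reach suffices. This greedy extension is trivial, since at each step the current endpoint has at least $\lfloor n/3\rfloor+1>\ell$ outneighbours while fewer than~$\ell$ vertices have been used or forbidden (namely $a,b,x,z$ and the path so far).

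Now apply Lemma~\ref{lemma:34or5} in $G$ minus $\{a,b,z\}\cup(V(P)\setminus\{v\})$ (with $C:=\ell$, which is permissible since $n\ge 10^{10}\ell>8\cdot 10^9 C$) to obtain a $v$-$x$ path of \emph{some} length $k\in\{3,4,5\}$. The key point your analysis overlooks is the asymmetry between the two gadgets: Lemma~\ref{lemma:34or5} gives you one of $3,4,5$ with no control over which, but the butterfly gives you \emph{all three} of $2,3,4$ and you get to choose. So whatever~$k$ appears, take the $x$-$y$ butterfly path of length $7-k\in\{2,3,4\}$; the total is $(7-k)+(\ell-7)+k=\ell$. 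No rotation/extension machinery, no interval of middle lengths, and no separate treatment of $\ell\in\{7,8,9\}$ is needed (the middle path simply has length $0,1,2$ in those cases).
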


\begin{proof}
Fact~\ref{fact:butterfly} gives us an $xy$-butterfly
for some vertex $y\in V(G)$, with $a$, $b$ and $z$ as in the definition of an $xy$-butterfly.
Greedily pick a path~$P$ of length $\ell-7$ from~$y$ to some vertex~$v$ such that~$P$ avoids $a,b,x,z$
(the minimum semidegree condition implies the existence of such a path).

Now apply Lemma~\ref{lemma:34or5} to $G-(\{a,b,z\}\cup (V(P)\setminus \{v\})$ with
$C:=\ell$ (say) to find a $v$-$x$ path of length 3, 4 or 5.
Pick a path from~$x$ to~$y$ in the $xy$-butterfly of appropriate length
to obtain the desired $\ell$-cycle through~$x$.
\end{proof}

\section{Proofs of Theorem~\ref{thm:weakconj} and Proposition~\ref{digraphmindeg}} \label{partial}

The following lemma implies that if we allow ourselves a linear
`error term' in the degree conditions then instead of finding
an $\ell$-cycle, it suffices to look for a closed walk of
length~$\ell$. We will use (i) and (ii) in the proof of
Theorem~\ref{thm:weakconj}, (iii) in the proof of
Proposition~\ref{digraphmindeg} and (iv) in the proof of
Proposition~\ref{prop:arb_short}.
\begin{lemma}\label{lemma:walk2cycle}
Let~$\ell\ge 2$ be an integer.
\begin{enumerate}
\item[{\rm (i)}] Suppose that~$c>0$ and there exists an
    integer~$n_0$ such that every oriented graph~$H$
    on~$n\geq n_0$ vertices with $\delta^0(H)\geq cn$
    contains a closed walk~$W$ of length~$\ell$. Then for
    each~$\epsilon>0$ there
    exists~$n_1=n_1(\epsilon,\ell,n_0)$ such that if~$G$ is
    an oriented graph on $n\ge n_1$ vertices with
    $\delta^0(G)\geq (c+\epsilon)n$ then~$G$ contains an
    $\ell$-cycle.
\item[{\rm (ii)}] The analogue holds if we replace~$\delta^0(H)$ by $\delta^+(H)$
and $\delta^0(G)$ by~$\delta^+(G)$.
\item[{\rm (iii)}] The analogue of~(i) holds if we consider
    directed graphs instead of oriented graphs.
\item[{\rm (iv)}] The analogue of~(i) holds if we ask for a
copy of some specific (not necessarily closed) walk~$W$ of length~$\ell$ and for an
    orientation of a cycle which has a homomorphism
    into~$W$.
\end{enumerate}
\end{lemma}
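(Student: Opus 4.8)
The plan is to prove all four parts simultaneously, since the argument is essentially the same in each case: we take the hypothetical graph $G$ with slightly larger minimum (semi)degree, use the extra room provided by $\epsilon$ to first delete a bounded-size obstruction, then apply the closed-walk hypothesis to a suitable auxiliary graph, and finally straighten the resulting closed walk into a genuine cycle of the same length. First I would set up the reduction: given $\epsilon>0$ and the constants $c$, $\ell$, $n_0$ from the hypothesis, I would choose $n_1$ large compared with $n_0$, $\ell$ and $1/\epsilon$. The key idea is that a closed walk $W=v_1v_2\dots v_\ell v_1$ of length $\ell$ either already \emph{is} an $\ell$-cycle (all $v_i$ distinct) or it has a repeated vertex, in which case it decomposes into a closed sub-walk of strictly smaller length together with a shorter closed walk; pursuing this, $W$ contains a cycle of length $\ell'$ with $\ell'\mid$-something — more precisely, $W$ contains a \emph{closed walk} of any length $\ell'$ that is "reachable" by such contractions. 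The cleanest statement to isolate as a sub-claim is: if $W$ is a closed walk of length $\ell$ in a digraph, then $W$ contains a cycle $C$ whose length divides\footnote{Not literally divides; rather $\ell$ is a non-negative integer combination of the cycle lengths appearing in the decomposition.} — so instead I will use the standard fact that a closed walk decomposes edge-disjointly (as a multiset of edges) into cycles, hence contains a cycle of length $\le \ell$, but that is not quite enough to get length \emph{exactly} $\ell$.

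The actual mechanism that fixes the length is the following: I would not try to extract an $\ell$-cycle directly from one closed walk, but rather build it by a short "absorbing"/rotation step using the surplus degree. Concretely, suppose $W=v_1\dots v_\ell v_1$ is a shortest closed walk of length $\ell$ in $G$ and suppose it is not a cycle, so some vertex, say $v_1=v_j$ with $1<j<\ell$, is repeated; then $v_1v_2\dots v_{j}$ and $v_j v_{j+1}\dots v_\ell v_1$ are shorter closed walks whose lengths sum to $\ell$. The point is that we want to \emph{reroute} one of these shorter closed walks through fresh vertices so as to destroy the repetition without changing the total length. This is where the $\epsilon n$ surplus enters: each vertex has at least $(c+\epsilon)n$ out- and in-neighbours, and the closed walk uses fewer than $\ell$ vertices, so at every vertex of the walk there are $\gg \ell$ out-neighbours and in-neighbours outside $V(W)$; using these we can detour around any repeated vertex. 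After a bounded number of such detours (at most $\ell$, since each one reduces the number of repeated vertices) we obtain a genuine $\ell$-cycle. In parts (ii), (iii), (iv) exactly the same detour argument applies: for (ii) we only ever need out-neighbours for the forward detours (and the walk hypothesis itself only used $\delta^+$); for (iii) loops and digons are irrelevant to the length-$\ell$ closed walk and the same rerouting works; for (iv) we are given a fixed walk pattern $W$ and we want an orientation of a cycle admitting a homomorphism into $W$, and the closed walk produced is already a homomorphic image, so again we only need to delete repetitions by local rerouting through unused vertices, and the surplus degree supplies these.

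Let me restate the reduction step carefully so it is clean. Given $G$ with $\delta^0(G)\ge(c+\epsilon)n$ and $n\ge n_1$: if $n_1\ge n_0$ then the hypothesis applied to $G$ itself yields a closed walk $W_0$ of length $\ell$. Among all closed walks of length $\ell$ in $G$, pick one, $W$, minimising the number of vertices it \emph{revisits} (equivalently maximising $|V(W)|$). If $W$ has no revisited vertex it is the desired $\ell$-cycle. Otherwise write $W$ in order $w_1w_2\dots w_\ell w_1$ and let $w_i=w_j$ with $i<j$ be a repetition. I would replace the segment $w_i w_{i+1}\dots w_j$ (a closed walk of length $j-i<\ell$ rooted at $w_i$) by another closed walk of the \emph{same} length $j-i$ rooted at $w_i$ that uses $j-i-1$ vertices outside $V(W)\cup\{w_i\}$: such a walk is found greedily, stepping from $w_i$ to a new out-neighbour, then from that to a further new out-neighbour, and so on, closing up at the last step — at each of the $j-i$ steps the current vertex has at least $(c+\epsilon)n$ out-neighbours while we have forbidden at most $|V(W)|+\ell\le 2\ell$ of them, so a valid choice exists provided $(c+\epsilon)n>2\ell$, which holds for $n\ge n_1$. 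Wait — closing up at the last step forces us to land on $w_i$, which only costs one constraint and is guaranteed since $w_i$ has $\ge(c+\epsilon)n$ in-neighbours; one checks the count still works. The new closed walk of length $\ell$ has strictly more vertices, contradicting the choice of $W$. Hence $W$ was a cycle.

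The main obstacle, and the only place real care is needed, is making the greedy rerouting actually terminate with the right length: we must reroute a \emph{closed} sub-walk of a prescribed length $j-i$, and the naive greedy path of that length might fail to close up at $w_i$ on the nose. The fix is to reserve the last one or two steps: run the greedy choice of new vertices for $j-i-1$ steps to reach some vertex $u$ (choosing all intermediate vertices outside the forbidden set), then in the final step go from $u$ to $w_i$ — this requires $w_i\in N^+(u)$, which we cannot guarantee for an \emph{arbitrary} $u$. So instead I would, at step $j-i-1$, choose the new vertex $u$ from $N^-(w_i)\setminus(\text{forbidden})$, which is possible because $|N^-(w_i)|\ge(c+\epsilon)n\gg 2\ell$; the penultimate vertex is then forced to lie in $N^-(w_i)$ but the greedy freedom at all earlier steps is untouched. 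Thus the rerouting always closes up. Since $j-i\ge 2$ whenever there is a repetition (a closed walk of length $1$ would be a loop, impossible in an oriented graph, and even in the digraph case of (iii) a length-$1$ repetition cannot be the whole story for $\ell\ge2$), we always have at least the one free intermediate step we need, and in the degenerate case $j-i=2$ we just pick a common out-in neighbour of $w_i$ outside the forbidden set, again available by the degree bound. This completes the plan; the remaining details are the routine bookkeeping of the forbidden-set sizes and the choice $n_1:=\max\{n_0,\,10\ell/\epsilon\}$ (say).
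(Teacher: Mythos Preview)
Your rerouting step does not work. You replace the closed sub-walk $w_i w_{i+1} \dots w_j$ (where $w_i = w_j$) by another closed walk of length $j-i$ rooted at $w_i$ using fresh interior vertices, but this keeps $w_i$ at \emph{both} positions $i$ and $j$: the very repetition you set out to remove is still there. If $w_{i+1}, \dots, w_{j-1}$ were already distinct and appeared nowhere else in $W$, the new walk has exactly as many distinct vertices as the old one, and your maximality argument yields no contradiction. (Concretely: for $\ell = 6$ the walk $abcadea$ becomes $ab'c'adea$, which is no better.) If instead you try to reroute so that position $j$ carries a fresh vertex $w_j' \neq w_i$, you then need $w_j' \in N^+(u) \cap N^-(w_{j+1})$ for the preceding detour vertex $u$, and for small $c$ (as in Theorem~\ref{thm:weakconj}(i), where $c \approx 1/k$ with $k$ large) two sets of size roughly $(c+\epsilon)n$ need not meet at all. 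Your own degenerate case $j-i=2$ already exhibits this: a vertex in $N^+(w_i)\cap N^-(w_i)$ is not guaranteed by any semidegree bound below $n/2$. The same obstruction breaks your treatment of part~(ii), where only $\delta^+$ is assumed and $N^-(w_i)$ may be tiny.

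The paper takes an entirely different route and never tries to straighten a walk inside $G$. It applies the degree form of the digraph Regularity Lemma to $G$, obtains a reduced oriented graph $R$ (made oriented by randomly breaking double edges) with $\delta^0(R)\ge c|R|$, and applies the hypothesis with $H:=R$ to find a closed walk of length $\ell$ in $R$ --- not in $G$. A closed walk in $R$ is a homomorphism from the directed $\ell$-cycle into $R$; because each vertex of $R$ stands for a large cluster in $G$ and consecutive clusters along the walk form $\epsilon'$-regular dense pairs, the Embedding (Key) Lemma upgrades this homomorphism to a genuine copy of the $\ell$-cycle in $G$. The blow-up structure supplied by regularity is precisely what provides \emph{distinct} representatives for repeated clusters; the $\epsilon$ slack is absorbed by the regularity and density parameters, not spent on local detours.
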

Note that~(iv) is actually a strengthening of~(i).
The proof of Lemma~\ref{lemma:walk2cycle} is a standard
application of the Regularity lemma for digraphs. So we omit
the details, which can be found in~\cite{Lukethesis}. As
mentioned in the introduction, it would be interesting to find
a proof which avoids the Regularity lemma. This would probably
yield a much better bound on~$n_1$.

\removelastskip\penalty55\medskip\noindent{\bf Sketch of proof
of Lemma~\ref{lemma:walk2cycle}. } We only consider~(i). (The
arguments for the remaining parts are similar.) A directed
version of the Regularity lemma was proved by Alon and
Shapira~\cite[Lemma 3.1]{alonshapira}. Apply the degree form of
this Regularity lemma to~$G$ to obtain a partition of~$V(G)$
into clusters and a reduced digraph~$R'$. ($R'$ is sometimes
also called the cluster digraph). Roughly speaking, the
vertices of $R'$ are the clusters and there is a directed edge
from $A$ to $B$ in $R'$ if the bipartite subdigraph of $G$
consisting of the edges from $A$ to $B$ is $\eps'$-regular and
has density at least~$d$, where $\eps' \ll d \ll \eps$. One can
show that~$R'$ almost inherits the minimum semidegree of~$G$,
i.e.~$\delta^0(R')\ge (c+\eps/2)|R'|$. However, $R'$ need not
be oriented. But for every double edge of~$R'$ one can delete
one of the two edges randomly (with suitable probability) in
order to obtain an oriented spanning subgraph~$R$ of~$R'$ which
still satisfies $\delta^0(R)\ge c|R|$
(see~\cite[Lemma~3.1]{kelly_kuhn_osthus_hc_orient} for a
proof). Applying our assumption with $H:=R$ gives a closed walk
of length~$\ell$ in~$R$. Since~$n_1$ is large compared
to~$\ell$, this also holds for size of the clusters. So we can
apply the Embedding lemma (also called Key lemma) to find an
$\ell$-cycle
in~$G$. For a statement and proof of the Embedding lemma, see e.g.~the survey~\cite{regsurvey}.%
    \COMMENT{For (iii) we work with~$R'$. By hypothesis~$R'$ contains a closed walk $W$ of length~$\ell$.
Split each cluster of~$W$ into $\ell$ parts of (almost) equal
size. Then $W$ corresponds to an $\ell$-cycle in the digraph
whose vertices are the $|W|\ell$ subclusters and in which there
is an edge $V'_iV'_j$ iff $V_iV_j$ is an edge of $R'$ (here
$V'_i$ is a subcluster of $V_i$). The Embedding lemma applied
to this $\ell$-cycle gives an $\ell$-cycle in~$G$.}
\endproof

\removelastskip\penalty55\medskip\noindent{\bf Proof of
Theorem~\ref{thm:weakconj}(i). } Note that
Lemma~\ref{lemma:walk2cycle}(ii) implies that in order to prove
part~(i) it suffices to show that every oriented graph~$H$ with
$\delta^+(H)\ge |H|/k+149|H|/k^2$ contains a closed walk of
length~$\ell$. Theorem~\ref{thm:shen} implies that~$H$ contains
an
$a$-cycle~$C$ for some%
    \COMMENT{get $1/(1/k+149/k^2)+74=k^2/(k+149)+74 < k$. The latter holds
if $k^2+74k+149\cdot 74< k^2+149k$, ie $75k>149\cdot 74$, ok if
$k\ge 150$.} $a\le 1/(1/k+149/k^2)+74 < k$. But $a> 2$
since~$H$ is oriented and thus~$a$ divides~$\ell$ by our
definition of~$k$. By traversing~$C$ precisely $\ell/a$ times
we obtain the required closed walk of length~$\ell$ in~$H$.
\endproof

Note that the proof actually shows the following: Let~$c$ be
such that every oriented graph~$G$ with $\delta^+(G) \ge d$ has
a cycle of length at most $\lceil cn/d \rceil$. Then for each
$\eps >0$ there exists $n_0=n_0(\epsilon,\ell)$ such that every
oriented graph~$G$ on $n\geq n_0$ vertices with
$\delta^+(G)\geq cn/(k-1)+\epsilon n$ contains an $\ell$-cycle
(where~$\ell$ and~$k$ are as in Theorem~\ref{thm:weakconj}). In
particular, if we assume the Caccetta-H\"aggkvist conjecture,
then this implies that Conjecture~\ref{con:short} is
approximately true if we replace~$k$ by~$k-1$. Similarly, the
result in~\cite{chvatal_szemeredi} which gives a cycle of
length at most $\lceil 2n/(d+1)\rceil$ in an oriented graph of
minimum outdegree at least~$d$
implies that we may take $c:=2$.%
           \COMMENT{actually, a bit more}
It would be interesting to find improved approximate versions of Conjecture~\ref{con:short}.

To prove parts~(ii) and~(iii) of Theorem~\ref{thm:weakconj}, we will use the following lemma.

\begin{lemma}\label{lemma:diam6}
Let~$G$ be an oriented graph on~$n$ vertices.
\begin{enumerate}
\item[{\rm (i)}] If $\delta^0(G)\geq n/4$ then
either the diameter of~$G$ is at most~$6$ or~$G$ contains a $3$-cycle.
\item[{\rm (ii)}] If $\delta^0(G)> n/5$ then
either the diameter of~$G$ is at most~$50$ or~$G$ contains a $3$-cycle.
\end{enumerate}
\end{lemma}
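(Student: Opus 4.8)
The idea is to show that if $G$ has no $3$-cycle, then outneighbourhoods and inneighbourhoods expand rapidly, so that for any two vertices $x,y$ the forward-reachable set from $x$ and the backward-reachable set into $y$ together cover more than $n$ vertices after only a bounded number of steps, forcing a short $x$-$y$ path. Throughout I would fix a counterexample $G$ containing no $3$-cycle, and for a vertex $x$ write $N^i(x)$ for the set of vertices reachable from $x$ by a directed path of length \emph{exactly} $i$ (and $N^i_-(x)$ for the analogous in-version), and $B^i(x):=\bigcup_{j\le i}N^j(x)$ for the forward ball of radius $i$.

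For part~(i): the key expansion step is that if $S=N^+(x)$ then $S$ is an independent set (no edge inside $S$, since $x$ together with such an edge would give a $3$-cycle)... wait, an edge inside $N^+(x)$ gives a path of length $2$ from $x$, not a triangle; the triangle-free condition instead says $N^+(x)\cap N^-(x)=\emptyset$ and, more usefully, that $N^+(N^+(x))$ is disjoint from $\{x\}$ only — so the genuinely useful statement is Fact~\ref{fact:ind}-style: since $G$ is triangle-free, for \emph{any} vertex $v$, $N^+(v)$ induces no edge whose head sends an edge back to a common... Let me instead use the clean fact that triangle-freeness makes $N^+(v)$ an \emph{independent set} is false; what is true is that $N^-(v)\cup\{v\}\cup N^+(v)$... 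Rather, I would use: for any set $A$, if $A\cup N^+(A)$ misses many vertices then apply Fact~\ref{fact:avg} inside $A\cup N^+(A)$ together with the absence of $3$-cycles (which gives $e(A,A)$ small relative to out-degrees pointing outward). Concretely, the main lemma to prove is: \emph{if $\delta^0(G)\ge n/4$ and $G$ is triangle-free, then $|B^3(x)|> n/2$ for every $x$, and likewise for in-balls}; then for any $x,y$ the sets $B^3(x)$ and the radius-$3$ in-ball of $y$ intersect, giving $\mathrm{dist}(x,y)\le 6$. To get $|B^3(x)|>n/2$, I would chain two expansion steps: first show $|N^+(x)\cup N^{\le 2}(x)|\ge \delta^0(G)+ (\text{something})$, using that by triangle-freeness the out-neighbours of $x$ have almost all their out-neighbours outside $N^+(x)\cup\{x\}$, hence Fact~\ref{fact:avg} applied to the minimal-out-degree vertex of $N^+(x)$ yields a large $N^2(x)\setminus(N^+(x)\cup\{x\})$; iterate once more. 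The bookkeeping needs to track that $x\notin N^+(x)$, that the sets are nested correctly, and that $n/4$ is exactly the threshold making three steps suffice.

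For part~(ii) the argument is the same in spirit but with the weaker hypothesis $\delta^0(G)>n/5$ the per-step multiplicative expansion is weaker, so one needs a bounded but larger number of steps — roughly, each step multiplies the ball size by a factor bounded below by something like $(1+c)$ for a fixed $c>0$ as long as the ball is smaller than $(1-c')n$, and $O(1)$ steps (at most $25$ from each side, hence diameter $\le 50$) then push past $n/2$; the constant $50$ is generous so I would not optimise it. The main obstacle is making the single-step expansion estimate clean and uniform: one must argue that when the current forward ball $B$ is not too large, there is a vertex in the "frontier" $N^i(x)$ with few out-neighbours inside $B$, hence (triangle-freeness is used to ensure these out-neighbours land in genuinely new territory rather than looping back) many out-neighbours outside $B$, and crucially that one such new vertex being reachable in $i+1$ steps is enough — i.e. I only need $B^{i+1}(x)\supsetneq B^i(x)$ by a definite amount, not that \emph{every} frontier vertex expands. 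The triangle-free hypothesis enters precisely to rule out the degenerate case where all the out-edges from the frontier return into the ball; quantifying "return into the ball" versus "escape" against the $n/4$ (resp.\ $n/5$) bound is the delicate point, and I expect the write-up to hinge on one inequality of the form $\delta^0(G) - |B|/2 > 0$ failing only when $|B|$ is already large.
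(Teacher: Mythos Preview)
Your overall architecture---grow forward balls $X_i:=N^+(X_{i-1})\cup X_{i-1}$ from $x$ and backward balls into $y$ until each exceeds $n/2$, then intersect---is exactly the paper's. The gap is in the single-step expansion. You plan to use Fact~\ref{fact:avg}, which for any set $X_i$ produces a vertex $v\in X_i$ with $d^+_{X_i}(v)\le (|X_i|-1)/2$ and hence
\[
|X_{i+1}|\ \ge\ |X_i|/2+\delta^0(G)+1/2.
\]
But this recursion has fixed point $2\delta^0(G)+1$. For part~(ii), with $\delta^0(G)$ just above $n/5$, that fixed point is about $2n/5<n/2$, so the balls \emph{never} pass $n/2$ and the argument collapses outright. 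For part~(i), with $\delta^0(G)\ge n/4$, the fixed point is $n/2+1$, but the convergence is geometric with ratio $1/2$: you need roughly $\log_2 n$ steps to cross $n/2$, not a bounded number. Your appeal to triangle-freeness to ``rule out the degenerate case where all the out-edges from the frontier return into the ball'' never crystallises into an inequality, and indeed (as your own hesitations show) there is no elementary structural consequence of forbidding directed $3$-cycles that improves the $|X_i|/2$ bound coming from Fact~\ref{fact:avg}.

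The paper supplies the missing idea by invoking Theorem~\ref{shen_triangle} (Shen) inside each ball: if $\delta^+(G[X_i])>3|X_i|/8$ then $G[X_i]$ already contains a $3$-cycle and we are done; otherwise some $v\in X_i$ has $d^+_{X_i}(v)\le 3|X_i|/8$, yielding the stronger recursion
\[
|X_{i+1}|\ \ge\ 5|X_i|/8+\delta^0(G),
\]
whose fixed point is $8\delta^0(G)/3$. For $\delta^0(G)\ge n/4$ this is $2n/3$, and three iterations from $|X_1|\ge n/4$ already give $|X_3|>n/2$; for $\delta^0(G)>n/5$ one uses the additive form $|X_{i+1}|\ge |X_i|+(n/5-3n/16)=|X_i|+n/80$ while $|X_i|\le n/2$, giving $|X_{25}|>n/2$. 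So the absence of a $3$-cycle enters not through any direct structural property of neighbourhoods but as the contrapositive of a minimum-outdegree-forces-triangle theorem applied to $G[X_i]$---precisely the ingredient your sketch is missing.
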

\begin{proof}
We first prove~(i).
Consider $x\in V(G)$ and define $X_1:=N^+(x)$ and $X_{i+1}:=N^+(X_i)\cup X_i$ for
$i\ge 1$. If there exists an~$i$ with $\delta^+(G[X_i])>3|X_i|/8$ then~$G[X_i]$
contains a $3$-cycle by Theorem~\ref{shen_triangle}.
So assume not. Then there exists a vertex $x_i\in X_i$ with
$|N^+(x_i)\cap X_i|\leq 3|X_i|/8$. Hence
\begin{equation*}
|X_{i+1}|\ge |X_i|+(\delta^0(G)-3|X_i|/8)\ge 5|X_i|/8+n/4.
\end{equation*}
In particular $|X_2|\geq 13n/32$ and $|X_3|\ge 65n/256 + n/4 = 129n/256 > n/2$.
Similarly, for any vertex~$y\neq x$ we have that $|\{v\in V(G):dist(v,y)\leq 3\}|>n/2$,
and thus there exists an $x$-$y$ path of length at most~6, which completes the proof of~(i).

To prove~(ii), define sets~$X_i$ as before. Consider any~$i$ for which $|X_i|\le n/2$.
Similarly as before
\begin{align*}
|X_{i+1}|& \ge |X_i|+(\delta^0(G)-3|X_i|/8)> |X_i|+(n/5-3|X_i|/8) \ge |X_i|+(n/5-3n/16)\\
& =|X_i|+n/80.
\end{align*}
Thus $|X_{25}|>n/2$. Similarly, for any vertex~$y\neq x$ we have that
$|\{v\in V(G):dist(v,y)\leq 25\}|>n/2$. Thus there exists an $x$-$y$ path of length at most~50.
\end{proof}

\removelastskip\penalty55\medskip\noindent{\bf Proof of
Theorem~\ref{thm:weakconj}(ii). } As in the proof of~(i), by
Lemma~\ref{lemma:walk2cycle}(i) it suffices to show that every
sufficiently large oriented graph~$H$ with $\delta^0(H)\ge
|H|/4+1$ contains a closed walk of length~$\ell$. If~$H$ has a
$3$-cycle then it contains such a walk since~$3$ divides~$\ell$
by definition of~$k$. Thus we may assume that~$H$ has
no~$3$-cycle. Fact~\ref{fact:ind} implies that the maximum size
of an independent set is smaller than the
neighbourhood~$N_H(v)$ of any vertex~$v$. Thus~$H$ contains
some orientation of a triangle. By assumption this is not a
$3$-cycle, and so it must be transitive, i.e.~the triangle
consists of vertices $x,y,z$ and edges $xz,xy,zy$.

Since $H-z$ has no $3$-cycle, Lemma~\ref{lemma:diam6}(i)
implies that~$H-z$ contains a $y$-$x$ path~$P$ of length~$t\le
6$. This gives us 2 cycles $C_1:=yPxy$ and $C_2:=yPxzy$ of
lengths~$t+1$ and~$t+2$ respectively. Write~$\ell$ as
$\ell=a(t+1)+r$ with $0\leq r\leq t\leq 6$. We can wind~$r$
times around~$C_2$ and~$(a-r)$ times around~$C_1$ to find a
closed walk of length~$\ell$ in~$H$ provided that $r\leq a$.
But the latter holds as $a = \floor{\ell/(t+1)} \geq 6$.
\endproof

In the proof of Theorem~\ref{thm:weakconj}(iii), we will use the following result (on undirected graphs)
of Andr\'asfai, Erd\H{o}s and S\'os~\cite{andrasfai}:
\begin{theorem} \label{andrasfai}
Every triangle-free graph~$F$ on $n$ vertices with minimum degree $\delta(F) > 2n/5$ is
bipartite.
\end{theorem}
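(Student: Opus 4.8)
\noindent{\bf Proof proposal. }
The plan is to argue by contradiction and to extract information from a \emph{shortest odd cycle}. Suppose $F$ is triangle-free with $\delta(F)>2n/5$ but is not bipartite, and let $C=x_0x_1\dots x_{2k}$ (indices modulo $2k+1$) be a shortest odd cycle in $F$; since $F$ is triangle-free, $2k+1\ge 5$. First I would note that $C$ has no chords: a chord $x_ix_j$ would split $C$ into two cycles whose lengths sum to $2k+3$, so one of them would be odd of length at most $2k$, contradicting the minimality of $C$. In particular $N(x_i)\cap V(C)=\{x_{i-1},x_{i+1}\}$ for every $i$, so every vertex of $C$ has exactly two neighbours on $C$.

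The key step is to prove that $|N(v)\cap V(C)|\le 2$ for \emph{every} vertex $v$ of $F$. For $v\in V(C)$ this was just noted, so suppose $v\notin V(C)$. No two neighbours of $v$ on $C$ can be adjacent on $C$, since $F$ is triangle-free. Now suppose $x_i,x_j\in N(v)$ are at cyclic distance $d$ on $C$, with $2\le d\le k$. Joining $v$ to each of the two $x_i$-$x_j$ arcs of $C$ yields two cycles, of lengths $d+2$ and $2k+3-d$; their lengths sum to $2k+5$, so exactly one of them is odd, and a short parity check (according to whether $d$ is odd or even) shows that, unless $d=2$, this odd cycle has length at least $5$ and at most $2k-1$, contradicting the minimality of $C$. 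Hence any two neighbours of $v$ on $C$ are at cyclic distance exactly $2$; but in $C_{2k+1}$ with $2k+1\ge 5$ one cannot have three vertices pairwise at cyclic distance $2$ (given two such vertices, no third vertex lies at cyclic distance $2$ from both), so $|N(v)\cap V(C)|\le 2$.

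Finally, I would double-count the incidences between $V(C)$ and $V(F)$. Since the $2k+1\ge 5$ vertices of $C$ each have degree greater than $2n/5$,
\[
\sum_{i=0}^{2k} d(x_i)\;=\;\sum_{i=0}^{2k}|N(x_i)|\;=\;\sum_{v\in V(F)}|N(v)\cap V(C)|\;\le\;2n,
\]
whereas $\sum_{i=0}^{2k}d(x_i)>(2k+1)\cdot 2n/5\ge 2n$. This contradiction shows that $F$ contains no odd cycle, i.e.\ $F$ is bipartite.

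The main obstacle I anticipate is the claim that $|N(v)\cap V(C)|\le 2$ for $v\notin V(C)$: one must keep careful track of the parities of the two auxiliary cycles and check, separately according to the parity of $d$, that the odd one is strictly shorter than $C$ (and is not a triangle), so that minimality is genuinely violated. The chord-freeness of $C$ and the final double-count are then entirely routine.

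\endproof
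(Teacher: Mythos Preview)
Your proof is correct and complete; the shortest-odd-cycle argument with the double count is the standard elementary proof of this result, and your case analysis on the parity of~$d$ goes through as you describe.

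However, there is nothing to compare against: the paper does not prove Theorem~\ref{andrasfai} at all. It is quoted as a known result of Andr\'asfai, Erd\H{o}s and S\'os~\cite{andrasfai} and then applied as a black box in the proof of Theorem~\ref{thm:weakconj}(iii). So your write-up supplies a proof where the paper simply cites one; if anything, it is more self-contained than what the paper intends.
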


\removelastskip\penalty55\medskip\noindent{\bf Proof of Theorem~\ref{thm:weakconj}(iii). }
Again, by Lemma~\ref{lemma:walk2cycle}(i) it suffices
to show that every sufficiently large oriented graph~$H$ on $n$ vertices with $\delta^0(H)> n/5+1$
contains a closed walk of length~$\ell$.

Let $F$ be the underlying undirected graph of $H$. Since $H$
has no double edges, we have $\delta(F)> 2n/5$. Suppose first that~$F$ contains a triangle.
This cannot correspond to a $3$-cycle in~$H$, as this in turn immediately yields
a closed walk of length $\ell$ in~$H$. So~$H$ must contain a transitive triangle,
i.e.~vertices $x,y,z$ with $xz,xy,zy\in E(H)$.
We can now proceed similarly as in the proof of Theorem~\ref{thm:weakconj}(ii):
by Lemma~\ref{lemma:diam6}(ii) we can find a $y$-$x$ path~$P$
of length~$t\leq 50$ in $H-z$. This gives us 2 cycles
$C_1:=yPxy$ and $C_2:=yPxzy$ of lengths~$t+1$ and~$t+2$ respectively.
To obtain a closed walk of length $\ell$, write~$\ell$ as
$\ell=a(t+1)+r$ with $0\leq r\leq t\leq 50$. We can wind~$r$ times around~$C_2$
and~$(a-r)$ times around~$C_1$ to find a closed walk of length~$\ell$ in~$H$
provided that $r\leq a$.
But the latter holds as $a = \floor{\ell/(t+1)} \geq 50$.

So now suppose that $F$ does not contain a triangle. Then
Theorem~\ref{andrasfai} implies that $F$ (and thus $H$) is
bipartite. We will now use this to find a $4$-cycle in~$H$.
(This immediately yields a closed walk of length~$\ell$
in~$H$.) So suppose that~$H$ has no $4$-cycle. Write
$\delta_0:=\lceil n/5 \rceil +1$. Denote the vertex classes of
$H$ by $A$ and~$B$. Let $a:=|A|$ and $b:=|B|$, where without
loss of generality we have $b \le n/2$. On the other hand $b\ge
\delta(F) \ge 2n/5$ and so  $a \le 3n/5$. Now consider any $v
\in A$. Choose a set $X_1 \subseteq N^+(v)$ and $Y_1 \subseteq
N^-(v)$ with $|X_1|=|Y_1|=\delta_0$. Let $X_2:= N^+(X_1)$ and
$Y_2:= N^-(Y_1)$. Note that $X_2$ and $Y_2$ are disjoint, as
otherwise we would have a $4$-cycle (through~$v$) in~$H$. The
number of edges from $X_1$ to $X_2$ is at least
$|X_1|\delta_0$, so by averaging there is a vertex $x\in X_2$
which receives at least $|X_1|\delta_0/|X_2|$ edges from $X_1$.
This in turn means that $x$ sends at most
$|X_1|(1-\delta_0/|X_2|)$ edges to $X_1$. Recall that $x$ does
not send an edge to $Y_1$ since otherwise $x\in X_2\cap
Y_2=\emptyset$. So if we let $Z:=B \setminus (X_1 \cup Y_1)$,
then $x$ sends at least
$\delta_0-|X_1|(1-\delta_0/|X_2|)=\delta_0^2/|X_2|$ edges to
$Z$. In particular, $|Z| \ge \delta_0^2/|X_2|$. On the other
hand, $|Z|=b-2\delta_0\le n/10$. So $|X_2| \ge
\delta_0^2/(n/10) \ge 2\delta_0$. Since $X_2$ and $Y_2$ are
disjoint, this implies that $|Y_2| \le a -|X_2| \le
3n/5-2\delta_0<n/5$. On the other hand, the definition of $Y_2$
implies that $|Y_2|\ge \delta^0(H)$, a contradiction.
\endproof


\removelastskip\penalty55\medskip\noindent{\bf Proof of
Proposition~\ref{digraphmindeg}. } First suppose that~$\ell$ is
even. The inequality $\delta_{di}(\ell,n) \ge
\delta_{orient}(\ell,n)$ is trivial. For the upper bound on
$\delta_{di}(\ell,n)$, suppose we are given a digraph~$H$
on~$n$ vertices with $\delta^0(H) \ge \delta_{orient}(\ell,n)$.
If $H$ has a double edge, it has a closed walk of length
$\ell$. If it has no double edge, then $H$ has an $\ell$-cycle
by definition of $\delta_{orient}(\ell,n)$. So in both cases,
$H$ has a closed walk of length~$\ell$. So part (iii) of
Lemma~\ref{lemma:walk2cycle} implies that for each $\eps>0$
there is an $n_0$ so that for all $n \ge n_0$ we have
$\delta_{di}(\ell,n) \le \delta_{orient}(\ell,n)+\eps n$, as
required.

If $\ell$ is odd, we obtain the lower bound by considering the
complete bipartite digraph with vertex class sizes as equal as
possible. The upper bound follows e.g.~from~(\ref{density}).
\endproof

\section{Proofs of results on arbitrary orientations}\label{sec:arb}

\subsection{Proof of Proposition~\ref{prop:arb_short}}\label{subsec:arb_short}

For both parts of Proposition~\ref{prop:arb_short}, the proof
divides into three steps.
\begin{enumerate}
\item For a given $\ell$-cycle~$C$ with cycle-type~$k$ find
    an appropriate walk~$W$ with prescribed orientation
    (which will be a cycle for~$k\geq 3$) into which there
    is a digraph homomorphism of~$C$.
\item Prove that the minimum semidegree condition in Proposition~\ref{prop:arb_short}
guarantees a copy of~$W$ in any sufficiently large oriented graph~$G$.
\item Apply Lemma~\ref{lemma:walk2cycle}(iv) to `lift' this
    result to one on the cycle~$C$ itself.
\end{enumerate}

Let us start with the first step.
For~$k=0$ it is clear that there is a digraph homomorphism
of~$C$ into a directed path of length~$\ell$. For~$k\geq 3$ we can
let~$W$ be a directed~$k$-cycle.
Suppose that~$k=1$. Then the number of edges of~$C$ oriented forwards is one
larger than the number of its edges oriented backwards. So~$C$ must contain a subpath
of the form~\texttt{ffb}, where we
write~\texttt{f} for an edge oriented forwards and~\texttt{b}
for an edge oriented backwards. But this means that there
exist constants~$0\leq k_1,k_2<\ell$ (depending on~$C$) such that there is a
digraph homomorphism of~$C$ into the oriented walk~$W$ obtained
by adding a transitive triangle to the~$k_1$th vertex of a
directed path of length~$k_2$ (see Figure~\ref{subfig:type_1}).
\begin{figure}
\centering\footnotesize
\subfigure[$k=1$, \texttt{ffb} in cycle]
{  \includegraphics[width=4cm]{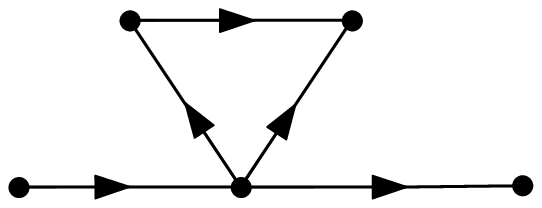}\label{subfig:type_1}}
\hspace{0.5cm}
\subfigure[$k=2$, \texttt{fffb} in cycle]
{  \includegraphics[width=3.5cm]{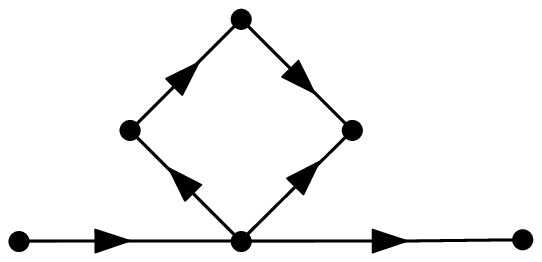}\label{subfig:type_2_square}}
\hspace{0.5cm}
\subfigure[$k=2$, \texttt{ffb} in cycle twice]
{  \includegraphics[width=4.5cm]{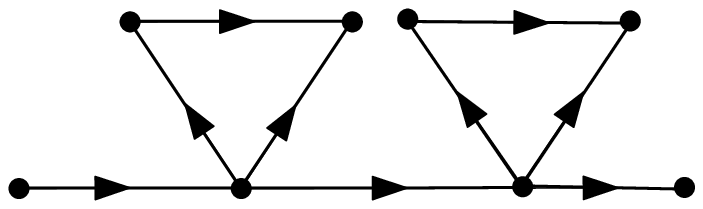}\label{subfig:type_2_triangle}}
\caption{The walks needed in the cases~$k=1$ and~$k=2$.}
\label{fig:W}
\end{figure}

Finally suppose that~$k=2$. So~$C$
contains two more edges oriented forwards than backwards.
Hence~$C$ contains two subpaths of the form~\texttt{ffb} or one subpath
of the form~\texttt{fffb}. In the first case we take~$W$ to be a suitable directed path
of length less than~$\ell$ with two transitive triangles attached, possibly to the same vertex (see
Figure~\ref{subfig:type_2_triangle}). In the second case we
let~$W$ be a suitable directed path with a 4-cycle
oriented~\texttt{fffb} attached (see Figure~\ref{subfig:type_2_square}).

For the second step we have to show that the relevant minimum
semidegree condition implies the existence of~$W$ in~$G$.
If~$W$ is a path then we only need the minimum semidegree
to be at least~$\ell$. If~$W$ is a~$k$-cycle then we just apply
Theorem~\ref{thm:short_cycles}. So suppose that $k=1,2$ and
consider any vertex $x$ of~$G$. The minimum semidegree
condition~$\delta^0(G)\geq (1/3+\alpha)n$ implies each vertex
$y\in N^+(x)$ has at least~$\alpha n$ neighbours in~$N^+(x)$.
So~$G$ contains the transitive triangles needed in
Figures~\ref{fig:W}(a) and~(c). To see that we can also find
the 4-cycle oriented \texttt{fffb}, suppose that~$N:=G[N^+(x)]$
does not contain a directed path of length~2 (otherwise we are
done). Then $N$ must contain two distinct vertices $y$ and $y'$ such
that $y$ has no outneighbours in~$N$ and $y'$ has no
inneighbours in~$N$. But this means that there is some $z\in
N^+(y)\cap N^-(y')$ and then $xyzy'$ has the required
orientation \texttt{fffb}. Hence we can find any of the walks
in Figure~\ref{fig:W} greedily. An application of
Lemma~\ref{lemma:walk2cycle}(iv) now completes the proof of
Proposition~\ref{prop:arb_short}. The argument for the
case~$k\geq 3$ also shows that Conjecture~\ref{con:short} would
imply an approximate version of Conjecture~\ref{con:arb_short}.

\subsection{Proof of universal pancyclicity result}\label{subsec:arb_pan}

To deduce Theorem~\ref{thm:arbpan} from
Theorem~\ref{thm:arb_ham} and Proposition~\ref{prop:arb_short} we will use the following
observation which is similar to one in~\cite{KO_3uniform}.

\begin{lemma}\label{lemma:random_split}
There exists an integer~$n_1$ such that the following holds for
all~$0<\alpha< 1$. Suppose we are given an oriented graph~$G$
on~$n\geq n_1$ vertices with minimum
semidegree~$\delta^0(G)\geq(3/8+\alpha-n^{-3/8})n$
where~$n/2\in\N$. Then there is a subset $U\subseteq V(G)$
of size $|U|=n/2:=u$ such that $\delta^0(G[U])\geq(3/8+\alpha-u^{-3/8})u$.
\end{lemma}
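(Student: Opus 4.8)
The plan is to use a standard probabilistic argument: pick $U$ uniformly at random among all subsets of $V(G)$ of size $u:=n/2$, and show that with positive probability every vertex of $U$ retains enough in- and out-neighbours inside $U$. For a fixed vertex $v\in V(G)$ and a fixed outneighbour set, the random variable $d^+_U(v)$ (conditioned on $v\in U$) is hypergeometric with mean $(|N^+(v)|-1)\cdot(u-1)/(n-1)$, which is at least roughly $(3/8+\alpha-n^{-3/8})u$. The target bound is $(3/8+\alpha-u^{-3/8})u$, so there is slack of order $(u^{-3/8}-n^{-3/8})u = \Theta(n^{5/8})$ between the expected value and the threshold we need to beat. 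This polynomial gap is comfortably larger than the $O(\sqrt{n\log n})$ fluctuations one expects, so a concentration inequality will finish the job.

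The key steps, in order, are as follows. First I would fix $v$ and condition on $v\in U$, so that $U\setminus\{v\}$ is a uniform $(u-1)$-subset of $V(G)\setminus\{v\}$; then $d^+_U(v)$ is a sum over $N^+(v)\setminus\{v\}$ of negatively-associated indicator variables (sampling without replacement), hence satisfies the same Chernoff--Hoeffding-type bounds as the independent case. Second, apply such a tail bound (e.g. Hoeffding's inequality for the hypergeometric distribution, or McDiarmid's bounded-differences inequality) to get $\Prob\big(d^+_U(v) < (3/8+\alpha-u^{-3/8})u \,\big|\, v\in U\big) \le \exp(-c n^{1/4})$ for some constant $c>0$, using that the deficit $n^{5/8}$ divided by $\sqrt{n}$ gives something growing like $n^{1/8}$, so the exponent is $-\Omega(n^{1/4})$ (or even $-\Omega(n^{1/8})$ with a cruder bound — either suffices). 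Third, do the symmetric computation for $d^-_U(v)$. Fourth, take a union bound over all $v\in V(G)$ and both conditions: the total failure probability is at most $2n\exp(-cn^{1/4})$, which is less than $1$ once $n\ge n_1$ for a suitable absolute constant $n_1$. Hence some choice of $U$ works, and since the bound holds for every $v\in U$ in particular, we get $\delta^0(G[U])\ge(3/8+\alpha-u^{-3/8})u$.

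One technical point to handle carefully: the conditioning $v\in U$ changes the distribution slightly (the neighbourhood is sampled from $n-1$ rather than $n$ vertices, and we lose the term for $v$ itself if $v$ were its own neighbour, which cannot happen, but an edge between two vertices both landing in $U$ is still fine). The cleanest route is to note that for each fixed $v$, conditioned on $v\in U$, the set $U\setminus\{v\}$ is uniformly distributed among $(u-1)$-subsets of the $(n-1)$-set $V(G)\setminus\{v\}$, and $N^+(v)\subseteq V(G)\setminus\{v\}$, so $d^+_U(v)$ is exactly hypergeometric with population $n-1$, $|N^+(v)|$ successes, and $u-1$ draws; its mean is $|N^+(v)|(u-1)/(n-1)\ge (3/8+\alpha-n^{-3/8})n\cdot(u-1)/(n-1)$, and a short estimate shows this exceeds the required threshold by $\Omega(n^{5/8})$. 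I expect this bookkeeping — verifying that the expected value genuinely clears the threshold by a polynomial margin after accounting for the $\pm 1$ shifts and the $-n^{-3/8}$ versus $-u^{-3/8}$ discrepancy — to be the only real obstacle; the concentration and union-bound steps are entirely routine. Since the exponent $-3/8$ is not special (any $-\beta\in(-1/2,0)$ would work), one could alternatively remark that the statement is chosen precisely so that the fluctuation term is negligible against the degree deficit.
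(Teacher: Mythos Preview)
Your proposal is correct and is essentially the same argument as the paper's: choose $U$ uniformly at random among $(n/2)$-subsets, use hypergeometric concentration to control $|N^\pm(v)\cap U|$, observe that the slack between the $n^{-3/8}$ and $u^{-3/8}$ terms is of order $n^{5/8}$, and finish with a union bound over all vertices. The only cosmetic difference is that the paper does not condition on $v\in U$ at all---it simply bounds $|N^+(x)\cap U|$ for every $x\in V(G)$ unconditionally (so the hypergeometric has population $n$ and $u$ draws), which automatically covers those $x$ that land in $U$; this avoids the $\pm 1$ bookkeeping you flagged.
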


To prove it we need a large deviation bound for the
hypergeometric distribution (see e.g.~\cite[Theorem
2.10]{JLR_random_graphs}).

\begin{lemma}\label{lemma:hyper}
Given~$q\in \N$ and sets~$A\subseteq T$ with~$|T|\geq q$,
let~$Q$ be a subset of size $q$ of~$T$ chosen uniformly at random.
Let~$X:=|A\cap Q|$. Then for all~$0<\eps<1$ we
have
\[
\Prob[|X-\E (X)|\geq \eps\E(X)]\leq 2\exp\left(-\frac{\eps^2}{3}\E(X)\right).
\]
\end{lemma}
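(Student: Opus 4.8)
The plan is to reduce the hypergeometric tail bound to the corresponding (and more familiar) binomial tail bound, and then to run the standard exponential-moment argument. Write $p:=|A|/|T|$ and $\mu:=\E(X)=qp$. The variable $X$ counts the elements of $A$ picked when sampling $q$ elements from $T$ \emph{without} replacement; let $Y\sim\mathrm{Bin}(q,p)$ be the corresponding count when sampling \emph{with} replacement. The key input is Hoeffding's comparison theorem: for every convex function $\phi\colon\R\to\R$ one has $\E[\phi(X)]\le \E[\phi(Y)]$. Applying this with $\phi(s):=e^{ts}$ and using $1+x\le e^x$ gives, for every $t\in\R$,
\[
\E[e^{tX}]\ \le\ \E[e^{tY}]\ =\ (1-p+pe^t)^q\ \le\ \exp\bigl(\mu(e^t-1)\bigr).
\]

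From here the argument is the usual Chernoff computation. For the upper tail, Markov's inequality applied to $e^{tX}$ with the (optimal) choice $t=\ln(1+\eps)>0$ yields
\[
\Prob[X\ge(1+\eps)\mu]\ \le\ \exp\!\bigl(-\mu\bigl((1+\eps)\ln(1+\eps)-\eps\bigr)\bigr)\ \le\ \exp\!\bigl(-\eps^2\mu/3\bigr),
\]
where the last step uses the elementary inequality $(1+\eps)\ln(1+\eps)-\eps\ge \eps^2/3$, valid for $0\le\eps\le1$. Symmetrically, taking $t=\ln(1-\eps)<0$ gives $\Prob[X\le(1-\eps)\mu]\le\exp(-\eps^2\mu/2)$ via $(1-\eps)\ln(1-\eps)+\eps\ge\eps^2/2$ for $0\le\eps<1$. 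Since $\eps^2\mu/2\ge\eps^2\mu/3$, a union bound over the two one-sided events gives $\Prob[|X-\E(X)|\ge\eps\E(X)]\le 2\exp(-\eps^2\E(X)/3)$, which is the claimed bound.

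The main obstacle is really just invoking Hoeffding's comparison theorem correctly; everything after that is textbook, and there are no degenerate cases to worry about (if $\mu=0$ then $X\equiv0$ and the bound is trivial; if $p=1$ then $X\equiv q$ and the upper tail vanishes). The only other points needing (routine) verification are the two one-variable inequalities for $(1\pm\eps)\ln(1\pm\eps)$, which follow by comparing Taylor expansions on $[0,1)$. Since the inequality in fact appears verbatim as \cite[Theorem~2.10]{JLR_random_graphs}, one could equally well simply cite it; the short proof above is included only to keep the statement self-contained.
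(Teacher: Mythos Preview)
Your argument is correct. The paper itself does not prove this lemma at all: it simply states the bound and cites \cite[Theorem~2.10]{JLR_random_graphs}, exactly as you note in your final paragraph. So there is no ``paper's proof'' to compare against; you have supplied a standard self-contained derivation (Hoeffding's convex-order comparison of hypergeometric and binomial, followed by the usual Chernoff/Cram\'er computation) where the paper is content with a reference.
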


\medskip\cproof{Proof of Lemma~\ref{lemma:random_split}. }{ Consider a
subset $U$ of vertices of~$G$ chosen uniformly at random from all subsets of~$V(G)$
of size~$u$. Let~$\eps:=(1-2^{-3/8})u^{-3/8}$. Consider any
vertex~$x$ of~$G$ and define a random
variable~$X^+:=|N^+(x)\cap U|$. Observe that~$\eps\E(X^+)\leq
\eps u =(1-2^{-3/8})u^{5/8}$ and hence
\begin{align*}
\E(X^+) \geq (3/8+\alpha-n^{-3/8})u = (3/8+\alpha-u^{-3/8})u +\eps\E(X^+).
\end{align*}
Then by Lemma~\ref{lemma:hyper} we have
\begin{align*}
\Prob[X^+\leq (3/8+\alpha-u^{-3/8})u] \leq \Prob[X^+\leq (1-\eps)\E(X^+)]
\leq 2\exp\left(-\frac{(1-2^{-3/8})^2}{3u^{3/4}}\frac{u}{4}\right) \leq n^{-2}.
\end{align*}
The final inequality holds since we assume~$n$, and hence~$u$,
to be sufficiently large. The same bound holds when we consider
inneighbourhoods of vertices. Hence with positive probability there
exists a set $U\subseteq V(G)$ with the desired minimum
semidegree property. }

We are now in a position to derive Theorem~\ref{thm:arbpan}.

\medskip \cproof{Proof of Theorem~\ref{thm:arbpan}. }{ Given $\alpha>0$,
set~$\ell_0:=\max\{n_0(\alpha/3),n_1,(6/\alpha)^{8/3}\}$, where~$n_0$
is the function defined in Theorem~\ref{thm:arb_ham} and~$n_1$
is as in Lemma~\ref{lemma:random_split}. Let $n\gg \ell_0,1/\alpha$
and consider an oriented graph~$G$ on~$n$ vertices with
minimum semidegree~$\delta^0(G)\geq(3/8+\alpha)n$.
Choose any $3\le \ell\le n$ and any orientation~$C$ of an $\ell$-cycle.
We have to show that $G$ contains a copy of~$C$. This is clear
if $\ell\leq \ell_0$, since $n\gg \ell_0,1/\alpha$ and thus an
application of Proposition~\ref{prop:arb_short} gives us~$C$
immediately.

So we may assume that $\ell>\ell_0$. Let $k$ be an integer such that
$2^k\ell\le n<2^{k+1}\ell$. A straightforward application of
Lemma~\ref{lemma:hyper} implies the existence of a subgraph $G'$ of $G$ on $n':=2^k\ell$ vertices
with $\delta^0(G')\geq (3/8+\alpha/2)n'$. Apply Lemma~\ref{lemma:random_split}
$k$ times to obtain a subgraph $G''$ of $G'$ on $\ell$ vertices with
$\delta^0(G'')\geq (3/8+\alpha/2-\ell^{-3/8})\ell\ge (3/8+\alpha/3)\ell$.
Since~$\ell> n_0(\alpha/3)$ we can now
apply Theorem~\ref{thm:arb_ham} to obtain a Hamilton cycle
oriented as~$C$ in~$G''$ and hence the desired orientation of an
$\ell$-cycle in~$G$.
}


\medskip

{\footnotesize \obeylines \parindent=0pt

Luke Kelly, Daniela K\"{u}hn \& Deryk Osthus
School of Mathematics
University of Birmingham
Edgbaston
Birmingham
B15 2TT
UK
}

{\footnotesize \parindent=0pt

\it{E-mail addresses}:
\tt{\{kellyl,kuehn,osthus\}@maths.bham.ac.uk}}
\end{document}